\def\div{{\rm div}\,}
\def\eps{{\varepsilon}}
\tikzstyle arrowstyle=[scale=2]
\tikzstyle directed=[postaction={decorate,decoration={markings,
    mark=at position .65 with {\arrow[arrowstyle]{stealth}}}}]
\tikzstyle reverse directed=[postaction={decorate,decoration={markings,
    mark=at position .65 with {\arrowreversed[arrowstyle]{stealth};}}}]
\newcommand{\R}{\mathbf R}
\theoremstyle{plain}
\newtheorem{theo}{Theorem}[section]
\newtheorem{prop}[theo]{Proposition}
\newtheorem{lem}[theo]{Lemma}
\theoremstyle{remark}
\newtheorem{rem}[theo]{Remark}
\theoremstyle{definition}
\numberwithin{equation}{section}
\def\le{\leqslant}
\def\ge{\geqslant}
\def\leq{\leqslant}
\def\geq{\geqslant}
\def\eps{{\varepsilon}}
\begin{document}

\title[A spectral approach to the narrow escape problem ]{A spectral approach to the narrow escape problem in the  disk}
\date{\today}

\author[T. Lelièvre]{Tony Lelièvre}
\author[M. Rachid]{Mohamad Rachid}   
\author[G. Stoltz]{Gabriel Stoltz}

\address[T. Lelièvre]{\'Ecole des Ponts ParisTech, CERMICS and Inria, France.}
\email{tony.lelievre@enpc.fr}

\address[M. Rachid]{\'Ecole des Ponts ParisTech, CERMICS and Inria, France.}
\email{Mohamad.Rachid@enpc.fr}

\address[G. Stoltz]{\'Ecole des Ponts ParisTech, CERMICS and Inria, France.}
\email{gabriel.stoltz@enpc.fr}



\begin{abstract}
We study the narrow escape problem in the  disk, which consists in
identifying   the first exit time and first exit point distribution of
a Brownian particle from the ball in dimension~2, with reflecting
boundary conditions except on small disjoint windows through which it
can escape. This problem is motivated by practical questions arising
in various scientific fields (in particular cellular biology and
molecular dynamics). We apply the quasi-stationary distribution
approach to metastability, which requires to study the eigenvalue
problem for the Laplacian operator with  Dirichlet boundary conditions
on the small absorbing part of the boundary, and  Neumann boundary
conditions on the remaining reflecting part. We obtain rigorous asymptotic estimates of the first eigenvalue and of the normal derivative of the associated eigenfunction in the limit of infinitely small exit regions, which yield asymptotic estimates of the first exit time and first exit point distribution starting from the quasi-stationary distribution within the disk.
\end{abstract}

\maketitle

\section{Introduction and Motivation}\label{sec:intro}
 The \textit{narrow escape problem} is a question arising in various models used in biophysics and cell biology. The mathematical formulation of the problem is as follows: a Brownian particle (representing e.g. an ion) is confined to a bounded domain  (representing e.g. a biological cell), with a reflecting boundary  except for  small disjoint windows (representing e.g. membrane channels) through which it  can escape. 
  The narrow escape problem then consists in precisely describing the exit event, namely the law of the pair of random variables: first exit time and  first exit point. One is for example interested in the average time needed for an ion to find an  ion channel located in the cell membrane, and also in information on which exit channel will be the most likely. Because of the practical importance of this question, there are numerous contributions from the physics community, which concentrate on the asymptotic behavior of the exit time, and mostly on the mean first exit time (see however~\cite{grebenkov2019full} for studies on the law of the exit time). We refer for example to~\cite{bib1,bib2,grigoriev2002kinetics,schuss2007narrow} for a few representative contributions from the physics literature, and for typical practical applications in biology. Our objective in this work is to prove asymptotic results on both the first exit time distribution and the first exit point distribution, using the quasi-stationary distribution approach to metastability~\cite{di2016jump}.

\subsection{Mathematical framework}
 Let us introduce the mathematical model associated with the problem. The motion of a particle in a bounded domain $\Omega\subset\mathbb{R}^{d}$
is described by the diffusion process:
\begin{align}\label{lang}
    dX_t=\sqrt{2}\hspace{0.05cm}dB_t,
\end{align}
where $X_t\in\Omega$ and $(B_t)_{t\geq0}$ is a $d$-dimensional
Brownian motion. Here  $\Omega\subset\mathbb{R}^{d}$ is a bounded
domain   whose boundary is
$\partial\Omega=\overline{\Gamma^{\varepsilon}_{\mathbf{D}}}\cup\overline{\Gamma^{\varepsilon}_{\mathbf{N}}}$
with
$\Gamma^{\varepsilon}_{\mathbf{D}}\cap\Gamma^{\varepsilon}_{\mathbf{N}}=\emptyset$. The
set $\Gamma^{\varepsilon}_{\mathbf{D}}={\cup}_{k=1}^{N}
\Gamma^{\varepsilon}_{\mathbf{D}_k}$ is composed of $N$ small disjoint
open connected absorbing windows  
 $(\Gamma^{\varepsilon}_{\mathbf{D} _k})_{k=1,\ldots,N}$,  and
 $\Gamma^{\varepsilon}_{\mathbf{N}}=\partial \Omega \setminus \overline{\Gamma^{\varepsilon}_{\mathbf{D}}}$ is the reflecting part (see Figure~\ref{Narrow} for a schematic representation in  dimension $d=2$). More precisely, let us introduce $N$ points $x^{(1)},\dots, x^{(N)}$  on the boundary of $\Omega$  and let $\rho_0 \in (0,2]$ be defined as
\begin{align}\label{rho0}
\rho_0= \min_{1\le i < j \le N} \left|x^{(i)}-x^{(j)}\right|.
\end{align}
We then assume that\footnote{The main results (Theorems~\ref{asymlambda N hole} and~\ref{exit point}) will actually be obtained for $\Omega$ the unit disk (thus in dimension $d=2$), and a slightly different geometry for the escape regions that coincides with the description given here in the limit $\varepsilon \to 0$. This is made precise below.} there exists $\varepsilon_{0} > 0$ (which we will need to further reduce later on) such that, for any~$\varepsilon\in (0,\varepsilon_0)$ and~$k\in \{1,\dots,N\}$, there exists $K^{(k)}_{\varepsilon}>0$ satisfying
\begin{align}
\label{assumption on Gamma_D}
    \Gamma^{\varepsilon}_{\mathbf{D}_k}=\partial\Omega\cap \mathbf{B}\left(x^{(k)},\mathrm{e}^{-1/K^{(k)}_{\varepsilon}}\right), \qquad \mathrm{e}^{-1/K^{(k)}_{\varepsilon}} \leq \frac{\rho_0}{2},
\end{align} 
where $\lim_{\varepsilon \to 0} K^{(k)}_{\varepsilon}= 0$, and where $\mathbf{B}(x,r)$ denotes the disk with center $x$ and radius $r$. The second condition in~\eqref{assumption on Gamma_D} ensures that the holes are disjoint. To give a concrete example, $N$ identical holes with sizes of order~$\varepsilon$ are obtained by considering $K^{(k)}_{\varepsilon}= 1/|\log \varepsilon|$.
    
For  $(X_t)_{t\geq0}$ satisfying~\eqref{lang} with an initial condition $X_0 \in \Omega$, we define the first exit time of~$(X_t)_{t\geq0}$ as follows:
\begin{align*}
    \tau:=\text{inf}\{t\geq0,\hspace{0.1cm} X_t\notin \overline{\Omega}\}.
\end{align*}
The exit event from $\overline{\Omega}$ is fully characterized by the couple
of random variables $(\tau,X_{\tau})$.
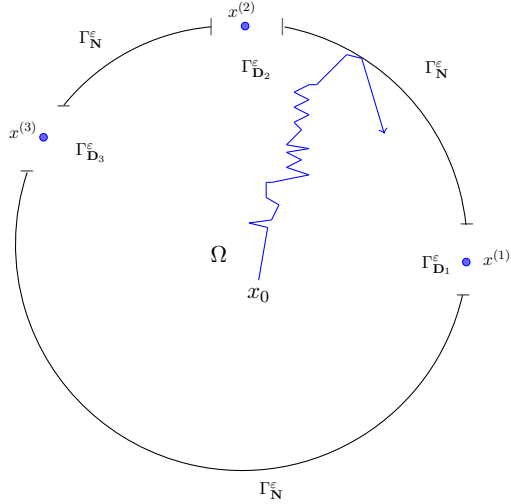
\begin{figure}
\begin{center}
\vspace{-0.4cm}
\begin{tikzpicture}[scale=1]
\draw (2.5,0.4) arc (15:79:3);
 \draw (-0.8,2.57) arc (95:141:2.84);
  \draw (2.6,-0.05) node[scale=0.7] {$-$};
  \draw (2.55,-1) node[scale=0.7] {$-$};
  \draw (-0.8,2.57) node[scale=0.6] {$|$};
   \draw (0.15,2.57) node[scale=0.6] {$|$};
    \draw (-2.76,1.52) node[scale=0.7] {$-$};
    \draw (-3.25,0.65) node[scale=0.7] {$-$};
   \draw[black] (2.5,0.4) arc (15:6:3);
     \draw[black] (2.54,-1) arc (-13:-198.5:3.01);
       \draw[fill=blue!60,draw=blue] (-0.35,2.58) circle (0.5mm);
    \draw[fill=blue!60,draw=blue] (2.59, -0.56) circle (0.5mm);
    \draw (-0.39,2.8) node[scale=0.6] {$x^{(2)}$};
     \draw[fill=blue!60,draw=blue] (-3.03,1.1) circle (0.5mm);
    \draw (-3.3,1.2) node[scale=0.6] {$x^{(3)}$};
         \draw (3,-0.5) node[scale=0.6] {$x^{(1)}$};
   
          \draw (-0.2,2) node[scale=0.6] {$\Gamma^{\varepsilon}_{\mathbf{D}_{2}}$};
          \draw (2.2,-0.6) node[scale=0.6] {$\Gamma^{\varepsilon}_{\mathbf{D}_{1}}$};
          \draw (-2.4,0.9) node[scale=0.6] {$\Gamma^{\varepsilon}_{\mathbf{D}_{3}}$};
           \draw (-2.4,2.4) node[scale=0.6] {$\Gamma^{\varepsilon}_{\mathbf{N}}$};
             \draw (2.2,2) node[scale=0.6] {$\Gamma^{\varepsilon}_{\mathbf{N}}$};
              \draw (0,-3.6) node[scale=0.6] {$\Gamma^{\varepsilon}_{\mathbf{N}}$};
\draw (-0.7, -0.45) node[scale=0.8] {$ \Omega$};
\draw[blue] (0, 0.5) -- (0.5, 0.6);
\draw[blue] (0.5, 0.6) -- (0.2,0.7);
\draw[blue] (0.2, 0.7) -- (0.4,0.8);
\draw[blue] (0.4, 0.8) -- (0.2,0.9);
\draw[blue] (0.2, 0.9) -- (0.5,0.95);
\draw[blue] (0.5, 0.95) -- (0.2,1);
\draw[blue] (0.2,1) -- (0.4,1.2);
\draw[blue] (0.4,1.2) -- (0.3,1.3);
\draw[blue] (0.3,1.3) -- (0.5,1.4);
\draw[blue] (0.5,1.4) -- (0.3,1.5);
\draw[blue] (0.3,1.5) -- (0.5,1.6);
\draw[blue] (0.5,1.6) -- (0.3,1.7);
\draw[blue] (0.3,1.7) -- (0.5,1.8);
\draw[blue] (0.5,1.8) -- (0.6,1.8);
\draw[blue](0.6,1.8) -- (0.7,1.9);
\draw[blue] (0.7,1.9) -- (0.8,2);
\draw[blue] (0.8,2) -- (0.9,2.1);
\draw[blue](0.9,2.1) -- (1,2.2);
\draw[blue] (1,2.2) -- (1.2,2.15);
\draw[blue] [->] (1.2,2.15) -- (1.5,1.15);
\draw[blue] (-0.17, -0.8) -- (-0.05,-0.1);
\draw[blue] (-0.05,-0.1) -- (-0.3,-0.04);
\draw[blue] (-0.3,-0.04) -- (0,0);
\draw[blue] (0,0)-- (0.1,0.2);
\draw[blue] (0.1,0.2)-- (-0.07,0.3);
\draw[blue] (-0.07,0.3)-- (-0.07,0.5);
\draw[blue] (-0.07,0.5)-- (-0,0.5);
\draw (-0.17, -1) node[scale=0.8] {$x_0$};
\end{tikzpicture}
\caption{\label{Narrow} Example of a domain  $\Omega$, ${\Gamma}^{\varepsilon}_{\mathbf{N}}$ and ${\Gamma}^{\varepsilon}_{\mathbf{D}}$ (for   $d=2$ and $N=4$).}
\end{center}
\end{figure}

Since the escape regions are very narrow, the process $(X_t)_{t\geq0}$ is metastable: it stays trapped for a very long time in the domain $\Omega$ before leaving it through one of the small windows. As a consequence, we use the quasi-stationary distribution approach to metastability: we study the exit event for the stochastic process $(X_t)_{t \ge 0}$ {\em starting from the quasi-stationary distribution} within $\Omega$.     We refer to~\cite{Letolupe,di2016jump,LePeutrecNectoux,LelievrePeutrecNectoux} for a justification of this approach (see also Remark~\ref{rem:QSD} below). The quasi-stationary distribution can be defined as the  distribution $\nu_{0}^{\varepsilon}$ such that, for any initial distribution~$\mu$ with support in $\overline{\Omega}$,
    \begin{align}
      \lim_{t \to \infty}\mathbb{P}_{\mu}[X_{t}\in\cdot \, |\, t<\tau]= \nu_{0}^{\varepsilon},
\end{align}
where the subscript~$\mu$ in $\mathbb{P}_{\mu}$ indicates that $X_0 \sim \mu$. In words, the quasi-distribution is the longtime limit of the distribution of configurations associated with trajectories which have not exited~$\Omega$. The distribution $\nu_{0}^{\varepsilon}$ is thus uniquely defined, with support in $\overline{\Omega}$.

Our aim is to determine asymptotic estimates of the law of the first
exit time  $\tau$ and  of  the   first exit point $X_{\tau}$ as
$\varepsilon\rightarrow 0$, with $X_0 \sim \nu_{0}^{\varepsilon}$. A
crucial result, which also explains the interest of considering the
exit event starting from $\nu_{0}^{\varepsilon}$, is the following: if
$X_0 \sim \nu_{0}^{\varepsilon}$, then $\tau$ is exponentially
distributed and independent of $X_\tau$
(see~\cite{Letolupe,collet2013quasi}). Let us mention that this result
is in particular fundamental to justify the use of the kinetic Monte
Carlo algorithm (see~\cite{Monte,di2016jump}) to efficiently sample
the exit event, and thus simulate the process over very long times. In
order to fully characterize the law of $(\tau,X_\tau)$, it therefore remains to identify $\mathbb{E}_{\nu_0^\varepsilon}[\tau]$ and the distribution of $X_\tau$. Here and in the following, the subscript in $\mathbb{E}_{\nu_{0}^{\varepsilon}}[\cdot]$ indicates that the process $(X_t)_{t\geq0}$ is such that $X_0  \sim \nu_{0}^{\varepsilon}$. 

Let us now explain why the above question actually amounts to studying the behavior of the first eigenvalue and first eigenvector of the infinitesimal generator of the stochastic process. The (opposite\footnote{The sign convention is justified by the fact that we prefer to work with positive operators.} of the) infinitesimal generator of the dynamics~\eqref{lang} is $-\Delta$.
In particular, the density function $f(t,x)$ of the process $X_t$ reflected on~$\Gamma^{\varepsilon}_{\mathbf{N}}$ and absorbed on~$\Gamma^{\varepsilon}_{\mathbf{D}}$ satisfies the Fokker--Planck equation: 
$$
\left\{
\begin{aligned}
    \partial_{t}f&=\Delta f \text{ in } \Omega, \\
    \partial_n f &= 0 \text{ on } \Gamma^{\varepsilon}_{\mathbf{N}}\text{ and }
    f = 0 \text{ on } \Gamma^{\varepsilon}_{\mathbf{D}}, \\    
    f(0,\cdot)&=f_0 \text{ in } \Omega,
\end{aligned}
\right.
$$
where $f_0$ is the density of $X_0$, and $\partial_{n}= \vec{n}\cdot\nabla$ denotes the normal derivative with $\Vec{n}$  the unit vector outward normal to $\Omega$.
Let us introduce the principal eigenvalue $\lambda_{0}^{\varepsilon}$ and the associated eigenfunction $u_{0}^{\varepsilon}$ related to this Fokker-Planck equation:
\begin{equation}\label{mixed on disk}
 \left\{
 \begin{aligned}
    -\Delta u_{0}^{\varepsilon}&=\lambda_{0}^{\varepsilon} \, u_{0}^{\varepsilon}\,\,\text{in} \,\,\Omega,\\
    \partial_{n}u_{0}^{\varepsilon}&=0\,\,\,\text{on}\,\,\Gamma^{\varepsilon}_{\mathbf{N}},\\
   u_{0}^{\varepsilon}&=0\,\,\,\text{on}\,\,\Gamma^{\varepsilon}_{\mathbf{D}}.
\end{aligned}
\right.
    \end{equation}
We denote by $\mathcal{L}^{\varepsilon}$ the operator\footnote{The superscript $\varepsilon$ indicates that the boundary conditions, and thus the domain of the operator, depend on~$\varepsilon$ (see the definition~\eqref{domain} below).} associated with~\eqref{mixed on disk}, namely minus the Laplacian with mixed boundary conditions on~$\partial \Omega$. In the following, we normalize $u_{0}^{\varepsilon}$ as
\begin{equation}
\label{eq:normalization_u_0_eps}
\int_{\Omega}(u_{0}^{\varepsilon}(x))^{2} \, \mathrm{d}x= 1, 
\qquad 
u_{0}^{\varepsilon}<0 \textrm{ on } \Omega.
\end{equation}
The second condition can indeed be imposed as it is  standard to show that the first eigenfunction~$u^{\varepsilon}_0$ does not vanish in~$\Omega$ (see for example~\cite[Theorem 8.38]{gilbarg-trudinger-01}), so that it can be chosen to be negative in $\Omega$.

Let us explain why studying the exit problem starting from the quasi-stationary distribution reduces to studying the principal eigenvalue and eigenfunction (see~\cite{Letolupe}). The unique quasi-stationary distribution $\nu_{0}^{\varepsilon}$ of the process~\eqref{lang}
in $\Omega$ can be written as follows: 
\begin{align*}
\nu_{0}^{\varepsilon}(\mathrm{d}x)=\frac{u_{0}^{\varepsilon}(x)\, \mathrm{d}x }{\displaystyle\int_{\Omega}u_{0}^{\varepsilon}(y)\, \mathrm{d}y}.
\end{align*}
Moreover, starting from $\nu_0^\varepsilon$, the exit time~$\tau$ is exponentially distributed
 with parameter $\lambda_{0}^{\varepsilon}$: 
 \begin{align}\label{tau}
   \mathbb{E}_{\nu_{0}^{\varepsilon}}[\tau]=\frac{1}{\lambda_{0}^{\varepsilon}}.
 \end{align}
In addition, the law of $X_{\tau}$ (with support on $\partial \Omega$) is given by (provided that $\partial_{n}u_{0}^{\varepsilon}$ is well defined as an $L^1$ function on $\partial \Omega$, otherwise one should rely on a weak formulation of the normal derivative)
   \begin{align}\label{law of X_t}\frac{\partial_{n}u_{0}^{\varepsilon}(x) \, \sigma(dx)}{\displaystyle\int_{\partial\Omega}\partial_{n}u_{0}^{\varepsilon}(y)\,\sigma(\mathrm{d}y)},
     \end{align}
     where $\sigma$ denotes generically in the following 
      Lebesgue surface measures, here on $\partial \Omega$.
     In particular, the probability of exiting through the window $\Gamma^{\varepsilon}_{\mathbf{D} _k}$ is
     \begin{align}\label{X}
\mathbb{P}_{\nu_{0}^{\varepsilon}}[X_{\tau}\in\Gamma^{\varepsilon}_{\mathbf{D} _k}]=\frac{\displaystyle\int_{\Gamma^{\varepsilon}_{\mathbf{D}_k}}\partial_{n}u_{0}^{\varepsilon}(x)\,\sigma(\mathrm{d}x)}{\displaystyle\int_{\Gamma^{\varepsilon}_{\mathbf{D}}}\partial_{n}u_{0}^{\varepsilon}(y)\,\sigma(\mathrm{d}y)}.
 \end{align}
In order to characterize the law of $(\tau,X_\tau)$ in the limit $\varepsilon\rightarrow 0$, it thus suffices to study the asymptotic behavior of the first eigenvalue $\lambda_{0}^ {\varepsilon}$ and of the normal derivative of  $u_{0}^{\varepsilon}$ in this limiting regime.

In previous works~\cite{55,71}, the quasi-stationary distribution
approach to metastability has been used to study the exit event of the
overdamped Langevin dynamics from the basin of attraction of a local
minimum of a potential energy function: in this context, metastability
is related to {\em energetic barriers} that the process has to
overcome in order to leave a metastable basin. In particular, the
aforementioned works justify the use of the so-called Eyring--Kramers
formulas to parameterize the law of the exit event. In the present
work, we initiate a similar analysis for a situation where
metastability is not due to energetic barriers but to {\em entropic
  barriers}: the process does not have to get over energy levels to
leave $\Omega$, but to find the narrow exit regions on the boundary of
$\Omega$. Let us emphasize that the present work should be seen as a
first application of the quasi-stationary distribution approach to
metastability originating from such purely entropic barriers. As
discussed below, it opens the route to many generalizations (to other
dynamics, and other geometries, see e.g. appendix~\ref{app:3d} for a discussion about generalizations to the three dimensional unit ball) and to numerical algorithms  (to efficiently simulate the exit event relying on the identified asymptotic behaviors).

\begin{rem}\label{rem:QSD}
  As explained above, the quasi-stationary distribution approach to
  metastability is particularly useful to make a rigorous link between
  a metastable Markov  process in a continuous state space and a jump Markov process with values in a discrete state space, namely a set of labels of the metastable states~\cite{di2016jump}. It is also useful to analyze efficient algorithms to sample metastable Markov processes~\cite{lelievre-15}. The only situation where such a link can be rigorously made is when the quasi-stationary distribution is reached (up to machine precision) before the exit occurs, since the jump Markov process requires the exit time to be exponentially distributed and independent of the exit point, and the quasi-stationary distribution is the only initial condition which leads to an exit time independent of the exit point. 
  The fact that the stochastic process leaves the domain after the
  quasi-stationary distribution was reached depends in general on the
  initial condition within the state, but also on the realization of
  the noise. Notice that if this is not the case it means that the
  process actually quickly left the domain, so that there is no need
  to rely on an approximation to model the exit event: a simple cheap
  simulation of the original model can be used. Moreover, in a
  situation where metastability can be reinforced (e.g. by lowering
  the temperature for energetic barriers, or reducing the sizes of the
  exit windows for entropic barriers), it can typically be checked
  that the exit time grows much faster than the time to reach the
  quasi-stationary distribution. Identifying for a given model the initial conditions for which the law of the exit event is close to the law of the exit event starting from the quasi-stationary distribution can be done using so-called leveling results, see for example~\cite[Section 2.2]{lelievrePeutrecNectoux1}. In our specific context of entropic barriers, we intend to explore this in future works. Let us mention that the dependence of the mean exit time on the initial condition for entropic barriers has actually already been studied in the physics literature, see Section~\ref{sec:biblio} below for bibliographic comments.
\end{rem}

\subsection{Main results.}\label{Resultats principaux sur Omega}
Let us now present our main results. To perform our analysis, we consider the case when~$\Omega$ is the unit disk, \emph{i.e.}
\[
\Omega = \mathbf{B}(0,1).
\]
The first result concerns the spectral analysis of the operator $\mathcal{L}^{\varepsilon}$. A relevant parameter for the analysis is  
 \begin{equation}\label{eq:barK}
 \overline{K}_{\varepsilon}=\displaystyle\sum_{\substack{k=1}}^{N}K^{(k)}_{\varepsilon}.
 \end{equation}
Upon reducing $\varepsilon_0$, one can assume without loss of generality that, for all $\varepsilon \in(0,\varepsilon_0)$, 
 \begin{equation}\label{eq:barK_hyp}
 \overline{K}_{\varepsilon} < \frac12 \min \left( \frac{1}{|\log (\rho_0/2)|} ,1 \right),
 \end{equation}
an upper bound that will be used below.
 
 \begin{theo}\label{One eigenvalue results N holes}
Let 
$\mathcal{L}^{\varepsilon}$  be the   unbounded operator defined on $L^{2}(\Omega)$ with domain 
\begin{align}\label{domain}
     \mathcal{D}(\mathcal{L}^{\varepsilon})=\left\{u \in  H^{1}(\Omega), \, \, \Delta u \in L^{2}(\Omega), \, \,  \partial_{n}u_{|_{\Gamma^{\varepsilon}_{\mathbf{N}}}}=0,\, \,u_{|_{\Gamma^{\varepsilon}_{\mathbf{D}}}}=0 \right\}
\end{align} 
and acting as $\mathcal{L}^{\varepsilon}=-\Delta$.
Then, 
\begin{enumerate}
    \item[(i)] the operator $\mathcal{L}^{\varepsilon}$ is nonnegative self-adjoint and has  compact resolvent;
 \item [(ii)] there exist $c > 0$ and $\varepsilon_{0}>0$ such that, for any $\varepsilon\in (0,\varepsilon_{0})$,  
 \begin{align*}
     \operatorname{dim} \operatorname{Ran}\hspace{0.07cm}\pi_{[0,c\hspace{0.01cm}\overline{K}_{\varepsilon}]}(\mathcal{L}^{\varepsilon})=1,
 \end{align*}
 with $\pi_{[0,c\hspace{0.01cm}\overline{K}_{\varepsilon}]}(\mathcal{L}^{\varepsilon})$ is the spectral projection of $\mathcal{L}^{\varepsilon}$ on $[0,c\hspace{0.01cm}\overline{K}_{\varepsilon}]$. 
\end{enumerate}
\end{theo}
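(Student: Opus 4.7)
The plan is to handle the two items separately. For (i), a textbook quadratic-form construction combined with Rellich compactness suffices: introduce the closed subspace $V^{\varepsilon} = \{u \in H^{1}(\Omega) : u_{|\Gamma^{\varepsilon}_{\mathbf{D}}} = 0\}$ of $H^1(\Omega)$ and the symmetric bilinear form $a^{\varepsilon}(u,v)=\int_\Omega \nabla u \cdot \nabla v\,\dx$ with form domain $V^{\varepsilon}$. Since $\Gamma^{\varepsilon}_{\mathbf{D}}$ has positive surface measure for $\varepsilon \in (0,\varepsilon_0)$, a Poincaré inequality holds on $V^{\varepsilon}$, so $a^{\varepsilon}$ is a densely defined, closed, nonnegative, symmetric form on $L^2(\Omega)$. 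The representation theorem for closed forms produces a unique nonnegative self-adjoint operator, whose domain and action are identified with those of $\LL^{\varepsilon}$ through Green's formula and elliptic regularity. Compactness of the resolvent follows from the continuous inclusion $V^{\varepsilon} \hookrightarrow H^1(\Omega)$ together with the Rellich--Kondrachov embedding $H^1(\Omega) \hookrightarrow L^2(\Omega)$.

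For (ii), I would prove a two-sided sandwich: an upper bound $\lambda_0^{\varepsilon} \leq C \overline{K}_{\varepsilon}$ from an explicit logarithmic quasi-mode, and a uniform lower bound on the second eigenvalue via Poincaré--Wirtinger. For the upper bound, for each $k$ set $r_k = \mathrm{e}^{-1/K^{(k)}_{\varepsilon}}$ and define $\phi_k \in H^1(\Omega)$ depending only on $\rho_k = |x - x^{(k)}|$, equal to $0$ on $\{\rho_k \leq r_k\}$, equal to $1$ on $\{\rho_k \geq \sqrt{r_k}\}$, and equal to $2\log(\rho_k/r_k)/|\log r_k|$ in between. Hypothesis~\eqref{eq:barK_hyp} forces $\sqrt{r_k} < \rho_0/2$, so the transition annuli are pairwise disjoint. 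A direct polar computation then gives $\int_\Omega |\nabla \phi_k|^2 \,\dx \leq 4\pi K^{(k)}_{\varepsilon}$, so that $v_{\varepsilon} = \prod_{k=1}^N \phi_k \in V^{\varepsilon}$ satisfies $\int_\Omega |\nabla v_{\varepsilon}|^2\,\dx \leq 4\pi \overline{K}_{\varepsilon}$ and $\int_\Omega v_{\varepsilon}^2\,\dx \to \pi$ as $\varepsilon \to 0$, and the min--max principle yields $\lambda_0^{\varepsilon} \leq C \overline{K}_{\varepsilon}$ for a constant $C$ depending only on $N$ (once $\varepsilon_0$ is small enough).

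For the dimension-one statement, assume for contradiction that $\dim \operatorname{Ran}\pi_{[0, c\overline{K}_{\varepsilon}]}(\LL^{\varepsilon}) \geq 2$ and let $E \subset V^{\varepsilon}$ be a two-dimensional eigenspace on which the Rayleigh quotient is at most $c\overline{K}_{\varepsilon}$. The linear form $E \ni u \mapsto \int_\Omega u\,\dx$ has a kernel of dimension at least one, so one can extract $v \in E \setminus \{0\}$ with $\int_\Omega v\,\dx = 0$. Poincaré--Wirtinger on the unit disk then yields
\[
\mu_1^{\mathrm{N}} \int_\Omega v^2\,\dx \leq \int_\Omega |\nabla v|^2\,\dx \leq c\overline{K}_{\varepsilon} \int_\Omega v^2\,\dx,
\]
where $\mu_1^{\mathrm{N}} > 0$ is the first nonzero Neumann eigenvalue of $-\Delta$ on $\Omega$, contradicting the smallness of $\overline{K}_{\varepsilon}$ as soon as $c\overline{K}_{\varepsilon} < \mu_1^{\mathrm{N}}$. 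Choosing any $c \geq C$ and reducing $\varepsilon_0$ so that $c\overline{K}_{\varepsilon} < \mu_1^{\mathrm{N}}$ for $\varepsilon \in (0,\varepsilon_0)$ then gives simultaneously $\lambda_0^{\varepsilon} \leq c\overline{K}_{\varepsilon}$ and $\lambda_1^{\varepsilon} > c\overline{K}_{\varepsilon}$, proving (ii). The main obstacle is the quasi-mode estimate: the scaling $\int |\nabla \phi_k|^2 \asymp K^{(k)}_{\varepsilon} = 1/|\log r_k|$ reflects the logarithmic two-dimensional capacity of a small disk and is precisely what makes $\overline{K}_{\varepsilon}$ the right small parameter; everything else is a fairly standard combination of form calculus, Rellich's theorem, and Poincaré--Wirtinger on the disk.
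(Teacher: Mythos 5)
Your proposal is correct. Item (i) and the ``at most one'' half of item (ii) follow essentially the paper's route: the Friedrichs extension of the Dirichlet form on $H^{1}_{0,\Gamma^{\varepsilon}_{\mathbf{D}}}(\Omega)$ plus Rellich compactness for (i) (note that the Poincar\'e inequality you invoke is not needed for closedness of the form --- the form norm on $V^{\varepsilon}$ is just the $H^{1}$ norm and $V^{\varepsilon}$ is closed in $H^{1}(\Omega)$), and a comparison with the first nonzero Neumann eigenvalue for the dimension upper bound (the paper records this as $\lambda^{\mathbf{N}}_{1}\leq\lambda^{\varepsilon}_{1}$ via min--max in Proposition~\ref{ineq}; your Poincar\'e--Wirtinger contradiction on a mean-zero element of a putative two-dimensional low-lying subspace is the same inequality unwound). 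Where you genuinely diverge is the test function used for the upper bound $\lambda_{0}^{\varepsilon}\leq C\overline{K}_{\varepsilon}$. You take the standard two-dimensional capacity cutoff, radial around each $x^{(k)}$ and logarithmic on the annulus $r_{k}\leq|x-x^{(k)}|\leq\sqrt{r_{k}}$, whose Dirichlet energy scales like $1/|\log r_{k}|=K^{(k)}_{\varepsilon}$; your verification that \eqref{eq:barK_hyp} makes the transition annuli disjoint is correct, and the resulting bound is valid. The paper instead evaluates the Dirichlet form at the truncated harmonic-type quasimode ${\varphi}^{\varepsilon}_{\alpha}\mathds{1}_{\Omega_{\alpha,\varepsilon}}$ built from $f_{k}(x)=\log|x-x^{(k)}|+(1-|x|^{2})/4$ (Lemmas~\ref{Cercle 1}, \ref{cercle 2} and~\ref{prop of f_k}). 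Both constructions yield the same order $\mathrm{O}(\overline{K}_{\varepsilon})$, and for Theorem~\ref{One eigenvalue results N holes} alone your choice is the more elementary and self-contained one; the paper's choice is dictated by the fact that the same quasimode is reused, essentially unchanged, to extract the sharp constant $\overline{K}_{\varepsilon}$ in Theorem~\ref{asymlambda N hole} and the exit fluxes in Theorem~\ref{exit point}, which a crude capacity cutoff cannot deliver.
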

In particular, this result shows that the first eigenpair
$(\lambda_0^\varepsilon,u_0^\varepsilon)$ is well defined, and also
provides the upper bound~$0 \leq \lambda_0^\varepsilon \leq c
\overline{K}_{\varepsilon}$ in terms of the quantity introduced
in~\eqref{eq:barK}. Let us mention that Theorem~\ref{Oneholepropertie}
in Appendix~\ref{Laplacian p forms} gives the counterpart of this
result for~$p$-forms ($p \ge 1$).

The second result describes the exit event. In this work, we consider
that the domain is actually a small modification of the disk (the unit
ball in dimension 2), denoted by~$\widetilde{\Omega}_{\varepsilon}$,
see Figure \ref{Domain  for N=3} for a schematic representation. Let
us now present why introducing this specific
domain allows us to obtain precise asymptotic estimates of the first
eigenvalue and of the law of the first exit point.

\subsubsection*{Heuristic construction of approximate solutions to~\eqref{mixed on disk}.}
In order to define the modified domain~$\widetilde{\Omega}_\varepsilon$, we need to introduce the quasimode which is used to approximate the first eigenfunction~$u_0^\varepsilon$. The discussion below is informal: rigorous justifications of the quality of the quasimode are given in the forthcoming sections.

Let us first recall that, for a self-adjoint operator $T$ on a Hilbert space~$\mathcal{H}$, an $\mathcal{H}$-normalized element~$u$ such that~$\|\pi_{[b,+\infty)}(T)u\|$ is small is called a quasi-mode for the spectrum in $[0, b]$ of~$T$. Bounds on the spectral projection~$\|\pi_{[b,+\infty)}(T)u\|$ are typically obtained from bounds on the quadratic form associated with~$T$, see~\cite[Lemma 2.4.5]{Helffernier} (as well as Lemma~\ref{estimate} below in the context of this work).

The use of quasimodes to analyze the asymptotic spectral properties of operator is standard in semiclassical analysis (see~\cite{cycon-froese-kirsch-simon-87,helffer1988semi,dimassi1999spectral}). We adopt here a similar strategy, but in a different context since the parameter~$\varepsilon$ is not a semiclassical parameter. Recall that the eigenvalue problem of interest is given by~\eqref{mixed on disk}, with $|\Gamma^{\varepsilon}_{\mathbf{D}}|\rightarrow 0$ (in view of~\eqref{assumption on Gamma_D}). The operator $\mathcal{L}^{\varepsilon}$ defined in~Theorem~\ref{One eigenvalue results N holes} can therefore be thought of as a perturbation of the Laplacian with Neumann boundary conditions. 
In particular, one expects that $\lambda_{0}^{\varepsilon}\rightarrow 0$ when $\varepsilon\rightarrow 0$ (which is compatible with the intuition that the mean exit time should go to infinity in this regime, see~\eqref{tau}). The~$L^2$ normalized eigenvector associated with the first eigenvalue~0 of the Neumann Laplacian is (up to a sign) the constant function~$-1/\sqrt{\pi}$. This suggests that~$u_0^\varepsilon$ should be a perturbation of~$-1/\sqrt{\pi}$, at least sufficiently far away from the Dirichlet zones. This motivates introducing the function $\overline{u}^{\varepsilon}=u_{0}^{\varepsilon}+ 1/\sqrt{\pi}$. In view of~\eqref{mixed on disk}, the function $\overline{u}^{\varepsilon}$ should satisfy, at leading order in~$\varepsilon$ and away from the Dirichlet regions,
\begin{align}\label{problem MFPT}
 \left\{
    \begin{array}{rll}        
    \Delta \overline{u}^{\varepsilon} & \!\!\!\! \displaystyle = \frac{\lambda_{0}^{\varepsilon}}{\sqrt{\pi}} & \quad \text{ in } \Omega,\\
    \partial_{n}\overline{u}^{\varepsilon}& \!\!\!\! = 0 & \quad\text{ on }\Gamma^{\varepsilon}_{\mathbf{N}}.
     \end{array}
\right.
    \end{align}
This suggests to rewrite~$\overline{u}^\varepsilon$ as
    \[
    \overline{u}^\varepsilon = - \frac{\lambda_0^\varepsilon}{\sqrt \pi} f
\]
for some function~$f$ formally satisfying (by sending  $\varepsilon$ to 0 in~\eqref{problem MFPT}):
\begin{align}
 \left\{
    \begin{array}{rll}        
    \Delta f & \!\!\!\! \displaystyle = -1 & \quad \text{ in } \Omega,\\
    \partial_{n}f& \!\!\!\! = 0 
& \quad\text{  on } \partial \Omega \setminus \{x^{(1)},\ldots,x^{(K)}\}.
     \end{array}
\right.
    \end{align}

For a single exit region ($N=1$) centered at~$x^{(1)} \in \partial \Omega$, the function~$f$ formally satisfies 
\begin{equation}
\label{eq:Delta_f_Neumann_bord}
\left\{
    \begin{array}{rll}    
    \Delta f & \!\!\!\! = -1 & \quad \text{ in } \Omega,\\
    \partial_{n} f& \!\!\!\! = -|\Omega|\delta_{x^{(1)}} & \quad\text{ on } \partial \Omega,
     \end{array}
\right.
\end{equation}
where the Dirac mass on the boundary comes from the compatibility condition
\[
\int_{\partial \Omega} \partial_n f = \int_\Omega \Delta f = -|\Omega|.
\]
In view of~\cite[Equation~(3.13)]{Ammari2}, the solution~$f$ to~\eqref{eq:Delta_f_Neumann_bord} is (up to an irrelevant additive constant\footnote{The additive constant is irrelevant in the following since, as will become clear below, the crucial quantities to prove that $\varphi^\varepsilon$ is a good quasimode, and to then identify the first eigenvalue and the first exit point distribution, are derivatives of $\varphi^\varepsilon$.})
\[
f(x)=\log \left|x-x^{(1)}\right|+\frac{1-|x|^{2}}{4},
\]
where $|\cdot|$ denotes the Euclidean norm. Notice that similar
formulas for~$f$ are also used in works using formal asymptotic
expansions to get mean first passage times, see
e.g.~\cite[Equation~(2.23)]{SSH2}, and~\cite[Equation~(2.14)]{PWK} for
extensions to more general domains. For~$N=1$, an approximation of the  solution $u_0^\varepsilon$ to~\eqref{mixed on disk}--\eqref{eq:normalization_u_0_eps}  is therefore
\[
\varphi^\varepsilon = -\frac{1}{\sqrt{\pi}}-\frac{\lambda_0^\varepsilon}{\sqrt{\pi}}f.
\]
The value of~$\lambda_0^\varepsilon$ allowing to satisfy the Dirichlet boundary conditions at a distance~$\mathrm{e}^{-1/K_\varepsilon^{(1)}}$ of~$x^{(1)}$ is approximately determined by the condition~$1+\lambda_0^\varepsilon \log |x-x^{(1)}| = 0$, so that $\lambda_0^\varepsilon = K_\varepsilon^{(1)}$ (where we recall that the parameters $(K^{(k)}_{\varepsilon})_{k=1,\ldots,N}$ are related to the sizes of the absorbing windows, see~\eqref{assumption on Gamma_D}).

When there are $N$ absorbing windows, the above analysis suggests that~$u_0^\varepsilon$ solution to~\eqref{mixed on disk}--\eqref{eq:normalization_u_0_eps} should be well approximated away from the Dirichlet regions by
\begin{equation}
       \label{QuasiNhole}
   {\varphi}^{\varepsilon} := -\frac{1}{\sqrt{\pi}}-\frac{1}{\sqrt{\pi}}\sum_{k=1}^{N}K^{(k)}_{\varepsilon}f_{k}, 
   \qquad 
   f_k(x) = \log \left|x-x^{(k)}\right|+\frac{1-|x|^{2}}{4}.
\end{equation}
The link between the parameter
$K^{(k)}_{\varepsilon}$ and the Dirichlet region $\Gamma^{\varepsilon}_{\mathbf{D}_k}$ can be understood as
follows. Notice first that the function $\varphi^\varepsilon$ converges pointwise to
$-1/\sqrt{\pi}$ in~$\Omega$ in the limit $\varepsilon \to 0$, and, for
a fixed small $\varepsilon$, it is negative in a large central part of
$\Omega$, while it diverges to~$+\infty$ in the vicinity of the
boundary points $x^{(k)}$. In the small $\varepsilon$ limit, $\varphi^\varepsilon(x) =
0$ approximately translates into $K_\varepsilon^{(k)}f_k(x) = 1$
around the exit~$x^{(k)}$ (since only the function~$f_k$ diverges, the
other ones remaining bounded as~$x$ approaches the
boundary). Since~$(1-|x|^2)/4$ is bounded and
$K^{(k)}_{\varepsilon}\rightarrow 0$ as $\varepsilon\rightarrow0$, the
condition $K_\varepsilon^{(k)}f_k(x) = -1$ means
that $\Gamma^{\varepsilon}_{\mathbf{D}_k}$ asymptotically
satisfies~\eqref{assumption on Gamma_D}, see Lemma~\ref{levelset} below for precise statements.
Besides, the computation of~$\Delta \varphi^\varepsilon =
\overline{K}_\varepsilon/\sqrt{\pi}$ suggests that, in the limit $\varepsilon \to 0$, at leading order,
the eigenvalue~$\lambda_0^\varepsilon$ is~$\overline{K}_{\varepsilon}$
 (as defined by~\eqref{eq:barK}).

In order to make the previous reasoning rigorous, we face two
difficulties. First, the function~$\varphi^\varepsilon$ is not in
$\mathcal D(\mathcal L^\varepsilon)$ since it does not satisfy the
Dirichlet boundary conditions on
$\Gamma^{\varepsilon}_{\mathbf{D}}$. Second, another more fundamental
issue is related to the fact that functions in ${\mathcal D}(\mathcal
L^\varepsilon)$ do not admit in general normal derivatives in
$L^2(\partial \Omega)$. This is due to singularities which appear at
the intersections of the Neumann and Dirichlet boundaries (which meet
with an angle $\pi$), see for example the counterexample mentioned in
the introduction of~\cite{jakab2009regularity}. This lack of regularity implies difficulties to define the exit point distribution as~\eqref{law of X_t}, and to perform integration by parts in computations to prove that $\varphi^\varepsilon$ is a good quadi-mode. We will circumvent these two difficulties by working on a modified domain.


\subsubsection*{Construction of the modified domain.}
We are now in position to precisely introduce the modified domain $\widetilde{\Omega}_\varepsilon$, and
the associated modification of the eigenvalue problem~\eqref{mixed on disk} that we will consider in this work. The modified domain is defined by:
\begin{align}
  \label{The regularity domain}
  \widetilde{\Omega}_\varepsilon = \Omega \cap \left(\varphi^\varepsilon\right)^{-1}(-\infty,0].
\end{align}
The associated~$N$ Dirichlet regions around the exit points~$x^{(k)}$
are denoted by~$\widetilde{\Gamma}^{\varepsilon}_{\mathbf{D}_k}$
for~$k\in \{1,\dots,N\}$,
with~$\widetilde{\Gamma}^{\varepsilon}_{\mathbf{D}_k}$ defined as the
connected component of~$\Omega \cap
\left(\varphi^\varepsilon\right)^{-1}\{0\}$ which is the closest to
$x^{(k)}$ (see Lemma~\ref{lem:widetilde_Gamma_D_well_def} below for a proper definition of $\widetilde{\Gamma}^{\varepsilon}_{\mathbf{D}_k}$). We refer to Figure~\ref{Domain for N=3} for a schematic representation in the case $N=3$. We are thus interested in the exit event $(\widetilde{\tau}_\varepsilon,X_{\widetilde{\tau}_\varepsilon})$, where
\begin{equation}\label{tildetau}
    \widetilde{\tau}_\varepsilon:=\inf \left\{t\geq0,\hspace{0.1cm} X_t\notin\overline{\widetilde\Omega_\varepsilon} \, \right\}.
\end{equation}

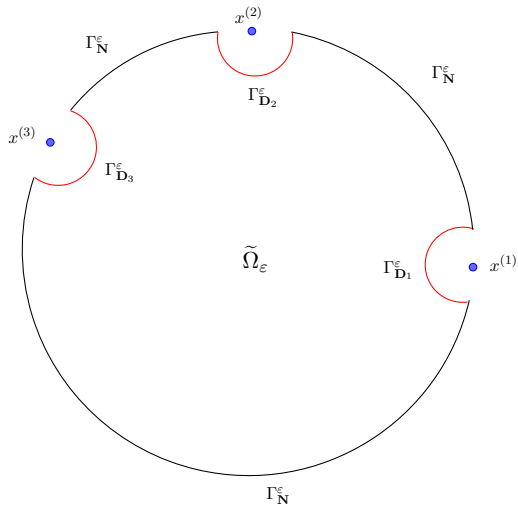
\begin{figure}
\begin{center}
\begin{tikzpicture}[scale=1]
 \draw (2.5,0.4) arc (15:79:3);
 \draw (-0.8,2.57) arc (95:141:2.84);
 \draw[red] (2.6,-0.05) arc (73:277:0.5);
 \draw[red] (-0.8,2.57) arc (170:370:0.5);
 \draw[red] (-2.76,1.52)  arc (71:-128:0.51);
   \draw[black] (2.5,0.4) arc (15:6:3);
     \draw[black] (2.54,-1) arc (-13:-198.5:3.01);
       \draw[fill=blue!60,draw=blue] (-0.35,2.58) circle (0.5mm);
    \draw[fill=blue!60,draw=blue] (2.59, -0.56) circle (0.5mm);
    \draw (-0.39,2.8) node[scale=0.6] {$x^{(2)}$};
     \draw[fill=blue!60,draw=blue] (-3.03,1.1) circle (0.5mm);
    \draw (-3.4,1.2) node[scale=0.6] {$x^{(3)}$};
         \draw (3,-0.5) node[scale=0.6] {$x^{(1)}$};
   
          \draw (-0.2,1.7) node[scale=0.6] {$\Gamma^{\varepsilon}_{\mathbf{D}_{2}}$};
          \draw (1.6,-0.6) node[scale=0.6] {$\Gamma^{\varepsilon}_{\mathbf{D}_{1}}$};
          \draw (-2.1,0.7) node[scale=0.6] {$\Gamma^{\varepsilon}_{\mathbf{D}_{3}}$};
           \draw (-2.4,2.4) node[scale=0.6] {$\Gamma^{\varepsilon}_{\mathbf{N}}$};
             \draw (2.2,2) node[scale=0.6] {$\Gamma^{\varepsilon}_{\mathbf{N}}$};
              \draw (0,-3.6) node[scale=0.6] {$\Gamma^{\varepsilon}_{\mathbf{N}}$};
\draw (-0.3, -0.45) node[scale=0.8] {$ \widetilde{\Omega}_{\varepsilon}$};
\end{tikzpicture}
\caption{The domain $\widetilde{\Omega}_{\varepsilon}$ for $N=3$}\label{Domain for N=3}
\end{center}
\end{figure}

As quantified in Lemmas~\ref{levelset} below, the
domain~$\widetilde{\Omega}_\varepsilon$ and the associated exit
regions are small modifications of~$\Omega$ and of the exit
regions~\eqref{assumption on Gamma_D}, and it is therefore expected
that the first exit time and first exit point distribution are not
substantially changed when passing from~$\Omega$
to~$\widetilde{\Omega}_\varepsilon$ (for the sake of conciseness, this
is however not formally proven in this article, and left for future
work). In particular, as explained in Section~\ref{sec:ppties_modified_operator}, the results stated on $\mathcal L^\varepsilon$ in Theorem~\ref{One eigenvalue results N holes} also hold for~$\widetilde{\mathcal L}^\varepsilon$, which is defined similarly as $\mathcal L^\varepsilon$ but on the modified domain~$\widetilde{\Omega}_\varepsilon$.

The following lemma follows directly from Lemmas \ref{Cercle 1} and \ref{cercle 2} (with~$\alpha=1$), and  shows that the exit regions on the modified domain $\widetilde{\Omega}_\varepsilon$ are indeed asymptotically close to the exit regions~\eqref{assumption on Gamma_D} introduced on the original domain $\Omega$.

\begin{lem}\label{levelset}
  There exists $C_{-}>C_{+}>0$ such that, for any $\eps \in (0,\varepsilon_{0})$, 
 \begin{align*}
\left( \bigcup\limits_{k=1}^{N} \mathbf{B}\left(x^{(k)},r^{(k)}_{\varepsilon,-}\right)\right) \cap  \Omega \subset({\varphi}^{\varepsilon})^{-1}\left([0,+\infty[\right)\cap  \Omega\subset \left( \bigcup\limits_{k=1}^{N} \mathbf{B}\left(x^{(k)}, r^{(k)}_{\varepsilon,+}\right)\right) \cap \Omega,
 \end{align*}
 where ${r^{(k)}_{\varepsilon,-}}=\mathrm{e}^{-C_{-}/K^{(k)}_{\varepsilon}}$ and $
{r^{(k)}_{\varepsilon,+}}=\mathrm{e}^{-{C}_{+}/K^{(k)}_{\varepsilon}}$ (see~\eqref{r+ and r-} below).
\end{lem}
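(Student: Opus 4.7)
My plan is to reduce the level-set inequality $\varphi^\varepsilon(x)\ge 0$ to a scalar inequality on the logarithmic sum $G(x):=\sum_{k=1}^N K_\varepsilon^{(k)}f_k(x)$. Since $1/\sqrt{\pi}>0$, the condition $\varphi^\varepsilon(x)\ge 0$ is equivalent to $G(x)\le -1$, so I need to sandwich the sublevel set $\{G\le -1\}\cap \Omega$ between two unions of small disks around the $x^{(k)}$.

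The key observation is that near a given exit point $x^{(k)}$ only the $k$-th term of $G$ carries a logarithmic singularity, while the other $N-1$ terms remain uniformly bounded. Concretely, I will first prove that there exists a constant $M>0$ depending only on $\rho_0$ such that, for any $k\in\{1,\ldots,N\}$ and any $x\in\overline{\mathbf{B}(x^{(k)},\rho_0/2)}\cap\overline\Omega$,
\[
\bigl|G(x) - K_\varepsilon^{(k)}\log|x-x^{(k)}|\bigr| \le M\overline{K}_\varepsilon,
\]
and similarly $|G(x)|\le M\overline{K}_\varepsilon$ whenever $x$ is at distance at least $\rho_0/2$ from every $x^{(j)}$. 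Both bounds follow directly from the fact that $(1-|x|^2)/4$ is bounded on $\overline\Omega$, together with the observation that for $j\ne k$ the distance $|x-x^{(j)}|$ lies in $[\rho_0/2,2]$, so that $\log|x-x^{(j)}|$ is controlled uniformly.

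The outer inclusion then follows in two steps. First, after reducing $\varepsilon_0$ so that $M\overline{K}_\varepsilon<1$, the far-field bound rules out $G(x)\le -1$ outside $\bigcup_k \mathbf{B}(x^{(k)},\rho_0/2)$. Second, within such a neighborhood, inserting the near-field estimate into $G(x)\le -1$ yields $\log|x-x^{(k)}|\le(-1+M\overline{K}_\varepsilon)/K_\varepsilon^{(k)}$, and I will choose any $C_+\in(0,1)$, for instance $C_+=1/2$, together with $\overline{K}_\varepsilon \le 1/(2M)$, to produce the required outer radius $r^{(k)}_{\varepsilon,+}=\mathrm{e}^{-C_+/K_\varepsilon^{(k)}}$. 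The inner inclusion is the dual computation: for $x\in\mathbf{B}(x^{(k)},\mathrm{e}^{-C_-/K_\varepsilon^{(k)}})\cap\Omega$, the upper near-field bound gives $G(x)\le -C_-+(1/4+M)\overline{K}_\varepsilon$, which is $\le -1$ whenever $C_->1$ and $\varepsilon_0$ is small enough, so the choice $C_-=3/2>C_+$ works.

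I do not foresee a serious obstacle: the argument is essentially computational once the singular part of $G$ has been isolated. The main care required is to consolidate the finitely many smallness conditions on $\overline{K}_\varepsilon$ into a single final choice of $\varepsilon_0$. It is also worth noting that the hypothesis $\mathrm{e}^{-1/K_\varepsilon^{(k)}}\le \rho_0/2$ in~\eqref{assumption on Gamma_D} ensures $r^{(k)}_{\varepsilon,-}\le \rho_0/2$, so the near-field estimate does apply on the inner disks, and that, since every step above is pointwise, intersection with $\Omega$ propagates automatically through the chain of inclusions.
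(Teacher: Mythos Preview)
Your argument is correct and follows essentially the same approach as the paper: the paper deduces the lemma from Lemmas~\ref{Cercle 1} and~\ref{cercle 2} (specialized to $\alpha=1$), whose proofs isolate the singular term $K_\varepsilon^{(k)}\log|x-x^{(k)}|$ near each~$x^{(k)}$ and bound the remaining contributions by $\mathrm{O}(\overline{K}_\varepsilon)$, exactly as you do. The only organizational difference is that the paper packages the two inclusions as two separate lemmas valid for a family of functions~$\varphi^\varepsilon_\alpha$, whereas you argue directly for $\varphi^\varepsilon$; your explicit choices $C_+=1/2$, $C_-=3/2$ are compatible with the paper's constraints $C_{1,+}<1/2$ and $C_{1,-}>1+1/8+\log(2)/2$ up to an admissible further reduction of~$\varepsilon_0$.
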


According to Lemma~\ref{levelset}, the function ${\varphi}^{\varepsilon}$ vanishes in $\Omega$ on $N$ disjoint curves $\left(\widetilde{\Gamma}^{\varepsilon}_{\mathbf{D}_k}\right)_{k=1\ldots,N}$, contained in an annulus centered at $x^{(k)}$, with radii~$r^{(k)}_{\varepsilon,-}$ and $r^{(k)}_{\varepsilon,+}$. More precisely, for $k\in \{1,\dots,N\}$, the curve~$\widetilde{\Gamma}^{\varepsilon}_{\mathbf{D}_k} \subset \left(\varphi^\varepsilon\right)^{-1}\{0\} \cup \partial\Omega$ is the connected component of $\Omega \cap (\varphi^\varepsilon)^{-1}\{0\}$ closest to~$x^{(k)}$; see Lemma~\ref{eq:further_reduced_varepsilon_0} (which requires further reducing~$\varepsilon_0$ to ensure that~\eqref{lem:widetilde_Gamma_D_well_def} holds). The boundary of~$\widetilde{\Omega}_\varepsilon$ is thus composed of two parts:
\begin{equation}\label{tilde partial}
\partial \widetilde{\Omega}_\varepsilon = \overline{\widetilde{\Gamma}^{\varepsilon}_{\mathbf{D}}} \cup \overline{\widetilde{\Gamma}^{\varepsilon}_{\mathbf{N}}}, 
\qquad \widetilde{\Gamma}^{\varepsilon}_{\mathbf{D}} =\bigcup_{k=1}^{N}\widetilde{\Gamma}^{\varepsilon}_{\mathbf{D}_{k}},
\qquad
\widetilde{\Gamma}^{\varepsilon}_{\mathbf{D}} \cap \widetilde{\Gamma}^{\varepsilon}_{\mathbf{N}} = \emptyset,
\end{equation}
where~$\widetilde{\Gamma}^{\varepsilon}_{\mathbf{N}}\subset \partial \Omega$ is the reflecting part. 

Our goal now is to examine the narrow escape problem on the domain $\widetilde{\Omega}_{\varepsilon}$; more precisely to study the following eigenvalue problem, which is a modification of~\eqref{mixed on disk} where~$\Omega$ is replaced
by~$\widetilde{\Omega}_{\varepsilon}$: 
\begin{align}\label{mixed}
 \left\{
      \hspace{-0.1cm}\begin{array}{ll}
   -\Delta  \widetilde{u}_{0}^{\varepsilon}&\hspace{-0.3cm}=\widetilde{\lambda}_{0}^{\varepsilon} \, \widetilde{u}_{0}^{\varepsilon}\,\,\text{in} \,\,\widetilde{\Omega}_{\varepsilon},\\
    \partial_{n} \widetilde{u}_{0}^{\varepsilon}&\hspace{-0.3cm}=0\,\,\,\text{on}\,\,\widetilde{\Gamma}^{\varepsilon}_{\mathbf{N}},\\
   \hspace{0.5cm}\widetilde{u}_{0}^{\varepsilon}&\hspace{-0.3cm}=0\,\,\,\text{on}\,\,\widetilde{\Gamma}^{\varepsilon}_{\mathbf{D}}.
     \end{array}
\right.
    \end{align} 
The eigenfunction is normalized so that 
\begin{equation}
\label{eq:normalization_widetilde_u_0_eps}
\int_{\widetilde{\Omega}_{\varepsilon}}\left(\widetilde{u}_{0}^{\varepsilon}(x)\right)^{2} \, \mathrm{d}x= 1, 
\qquad 
\widetilde{u}_{0}^{\varepsilon}<0 \textrm{ on } \widetilde{\Omega}_{\varepsilon}.
\end{equation}
In the following, we denote by $\widetilde{\mathcal L}^\varepsilon$ the operator associated with~\eqref{mixed}. More precisely, $\widetilde{\mathcal L}^\varepsilon$ is the operator with domain
\begin{align}\label{domaintilde}
     \mathcal{D}(\widetilde{\mathcal{L}}^{\varepsilon})=\left\{u \in  H^{1}(\widetilde{\Omega}_\varepsilon), \, \, \Delta u \in L^{2}(\widetilde{\Omega}_\varepsilon), \, \,  \partial_{n}u_{|_{\widetilde{\Gamma}^{\varepsilon}_{\mathbf{N}}}}=0,\, \,u_{|_{\widetilde{\Gamma}^{\varepsilon}_{\mathbf{D}}}}=0 \right\},
\end{align} 
acting as $\widetilde{\mathcal L}^\varepsilon = -\Delta$. The unique quasi-stationary distribution of the process~\eqref{lang} in $\widetilde{\Omega}_\varepsilon$ is
$$
\widetilde{\nu}_{0}^{\varepsilon}(\mathrm{d}x)=\frac{\widetilde u_{0}^{\varepsilon}(x)\hspace{0.1cm}\mathrm{d}x}{\displaystyle\int_{\Omega}\widetilde  u_{0}^{\varepsilon}(y)\hspace{0.1cm}\mathrm{d}y}.
$$

Working on  the modified domain $\widetilde{\Omega}_{\varepsilon}$ allows us to obtain a precise asymptotic estimate of the first eigenvalue and eigenvector $(\widetilde{\lambda}_0^\varepsilon,\widetilde{u}_0^\varepsilon)$, using the fact that the quasimode $\varphi^\varepsilon$ is by construction in the domain of $\widetilde{\mathcal L}^\varepsilon$; in particular, it satisfies the Neumann (resp. Dirichlet) boundary conditions on $\widetilde{\Gamma}^{\varepsilon}_{\mathbf{N}}$ (resp. $\widetilde{\Gamma}^{\varepsilon}_{\mathbf{D}}$), see~\eqref{normal of f_k} below for the Neumann boundary condition. Moreover, 
\begin{equation}
  \label{eq:angle_intersection}
  \textrm{for any $k\in\{1,\ldots,N\}$, the curves~$\widetilde{\Gamma}^{\varepsilon}_{\mathbf{D}_k}$ and $\widetilde{\Gamma}^{\varepsilon}_{\mathbf{N}}$ meet at an angle equal to $\frac{\pi}{2}$}
\end{equation}
since~$\partial_n \varphi^\varepsilon$ vanishes at the intersection points between~$\partial \Omega$ (whose normal is~$\Vec{n}$) and~$\left(\varphi^\varepsilon\right)^{-1}\{0\}$ (whose normal is~$\nabla\varphi^\varepsilon$). This implies that functions in ${\mathcal D}(\widetilde{\mathcal L}^\varepsilon)$ admit a normal derivative in $L^2(\partial \widetilde{\Omega}_{\varepsilon})$, as made precise in the next lemma.

\begin{lem}
\label{lem:reg_L2_normal_derivative}
   For any $v \in  \mathcal{D}(\widetilde{\mathcal{L}}^{\varepsilon})$, the normal derivative $\partial_n v$ is well defined as an $L^2$ function on $\partial \widetilde{\Omega}_{\varepsilon}$. 
\end{lem}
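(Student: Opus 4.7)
The plan is to show that every $v \in \mathcal{D}(\widetilde{\mathcal{L}}^{\varepsilon})$ actually belongs to $H^2(\widetilde{\Omega}_\varepsilon)$. Once this is established, the conclusion follows from the classical trace theorem: the normal derivative of an $H^2$ function lies in $H^{1/2}(\partial \widetilde{\Omega}_\varepsilon) \hookrightarrow L^2(\partial \widetilde{\Omega}_\varepsilon)$. The proof is purely local and proceeds by a finite partition of unity subordinated to a cover of $\overline{\widetilde{\Omega}_\varepsilon}$, distinguishing three types of neighborhoods.

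The easy cases are interior points, where $\Delta v \in L^2$ immediately yields $H^2_{\mathrm{loc}}$, and boundary points lying on a relatively open piece of $\widetilde{\Gamma}^{\varepsilon}_{\mathbf{N}}$ (an arc of the unit circle) or of $\widetilde{\Gamma}^{\varepsilon}_{\mathbf{D}_k}$ (a smooth level curve of the analytic function $\varphi^\varepsilon$ away from the exit points, by the implicit function theorem). On such pieces a single homogeneous boundary condition is in force on a smooth portion of $\partial \widetilde{\Omega}_\varepsilon$, so classical $H^2$ elliptic regularity up to the boundary for pure Dirichlet or pure Neumann problems applies. The only delicate configuration is a neighborhood of one of the $2N$ junction points $p \in \overline{\widetilde{\Gamma}^{\varepsilon}_{\mathbf{D}_k}} \cap \overline{\widetilde{\Gamma}^{\varepsilon}_{\mathbf{N}}}$, since generic mixed boundary value problems on corner domains only provide a fractional Sobolev regularity below $H^2$.

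To gain full $H^2$ regularity at such a junction point $p$, I would exploit the right-angle condition~\eqref{eq:angle_intersection} through a local reflection across the Neumann arc. Choose a small neighborhood $U$ of $p$ on which $\widetilde{\Gamma}^{\varepsilon}_{\mathbf{N}} \cap U$ is a smooth piece of $\partial \Omega$, and let $\sigma$ be inversion in the unit circle, which is anti-conformal and fixes $\widetilde{\Gamma}^{\varepsilon}_{\mathbf{N}}$ pointwise. Set $\widehat v = v$ on $\widetilde{\Omega}_\varepsilon \cap U$ and $\widehat v = v \circ \sigma$ on $\sigma(\widetilde{\Omega}_\varepsilon) \cap U$. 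The Neumann condition $\partial_n v = 0$ on $\widetilde{\Gamma}^{\varepsilon}_{\mathbf{N}}$ ensures $\widehat v \in H^1(U')$ for the extended open set $U' = (\widetilde{\Omega}_\varepsilon \cup \sigma(\widetilde{\Omega}_\varepsilon)) \cap U$, and anti-conformality of $\sigma$ gives $\Delta \widehat v \in L^2(U')$, since the Laplacian is multiplied by a smooth positive conformal factor under $\sigma$. By~\eqref{eq:angle_intersection}, the tangent to $\sigma(\widetilde{\Gamma}^{\varepsilon}_{\mathbf{D}_k})$ at $p$ is the reflection across $\widetilde{\Gamma}^{\varepsilon}_{\mathbf{N}}$ of the tangent to $\widetilde{\Gamma}^{\varepsilon}_{\mathbf{D}_k}$, hence collinear with it; the concatenation $\widetilde{\Gamma}^{\varepsilon}_{\mathbf{D}_k} \cup \sigma(\widetilde{\Gamma}^{\varepsilon}_{\mathbf{D}_k})$ is therefore an analytic curve through $p$ on which $\widehat v$ vanishes, reducing the local analysis to a pure Dirichlet problem on a domain with smooth boundary. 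Standard $H^2$ regularity up to the boundary then yields $v \in H^2$ near $p$, and patching the three local contributions provides $v \in H^2(\widetilde{\Omega}_\varepsilon)$ and concludes the proof. The main obstacle is precisely this reflection step: one has to check carefully that $\sigma$ preserves $H^1$ regularity together with the $L^2$-Laplacian condition (where both the vanishing normal derivative and the conformality of $\sigma$ intervene) and that the right-angle assumption is exactly what is needed to turn the concatenated Dirichlet curve into one that is smooth enough for the standard Dirichlet regularity theory to apply.
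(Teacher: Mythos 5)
Your argument is correct, but it takes a genuinely different route from the paper. The paper's proof is a one-line citation: since $\widetilde{\Gamma}^{\varepsilon}_{\mathbf{D}_k}$ and $\widetilde{\Gamma}^{\varepsilon}_{\mathbf{N}}$ meet at angle $\pi/2$, it invokes \cite[Proposition 32]{LelievrePeutrecNectoux} (itself resting on \cite{jakab2009regularity,gol2011hodge}), which yields $\partial_n v\in L^2(\partial\widetilde{\Omega}_\varepsilon)$ for mixed problems on creased domains. You instead give a self-contained local argument that proves the stronger conclusion $v\in H^2(\widetilde{\Omega}_\varepsilon)$, and your use of the inversion $\sigma(x)=x/|x|^2$ is exactly the right device: it is anti-conformal, fixes $\partial\Omega$ pointwise, its differential at a junction point $p$ is $I-2pp^{T}$ (reflection across the tangent line of the circle), so the right-angle condition~\eqref{eq:angle_intersection} forces the tangent of $\sigma(\widetilde{\Gamma}^{\varepsilon}_{\mathbf{D}_k})$ at $p$ to be collinear with that of $\widetilde{\Gamma}^{\varepsilon}_{\mathbf{D}_k}$, and the even (Neumann) reflection introduces no single-layer term in $\Delta\widehat v$ because the jump of the normal derivative across the arc vanishes. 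Two small points to tighten. First, the concatenated curve $\widetilde{\Gamma}^{\varepsilon}_{\mathbf{D}_k}\cup\sigma(\widetilde{\Gamma}^{\varepsilon}_{\mathbf{D}_k})$ is not analytic at $p$ in general (the two analytic branches share a tangent but generically not their curvatures); it is only $C^{1,1}$, which is however sufficient for local $H^2$ Dirichlet regularity, so the conclusion stands. Second, you implicitly use that $\nabla\varphi^\varepsilon\neq 0$ on $\widetilde{\Gamma}^{\varepsilon}_{\mathbf{D}_k}$ so that the level set is a smooth curve; this should be recorded (it follows from a strict version of the lower bound on $h'$ in Lemma~\ref{lem:widetilde_Gamma_D_well_def} after further reducing $\varepsilon_0$). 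The trade-off between the two approaches: the citation covers general crease angles and Lipschitz geometries with minimal work, while your reflection is elementary, yields full $H^2$ regularity, but relies crucially on the angle being exactly $\pi/2$ and on the Neumann boundary being a circle admitting an explicit anti-conformal reflection.
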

\begin{proof}
  Since  $\widetilde{\Gamma}^{\varepsilon}_{\mathbf{D}_{k}}$ and $\widetilde{\Gamma}^{\varepsilon}_{\mathbf{N}}$ meet at an angle equal to~$\frac{\pi}{2}$ for all $k\in\{1,\dots,N\}$ (see~\eqref{eq:angle_intersection}, as well as~\cite[Definition 31]{LelievrePeutrecNectoux} for more details about the angle between two hypersurfaces), then \cite[Proposition 32]{LelievrePeutrecNectoux}, which is a direct consequence of~\cite{jakab2009regularity,gol2011hodge}, implies that
$       \partial_{n}{v}\in L^{2}(\partial\widetilde{\Omega}_{\varepsilon})$.
\end{proof}

From Lemma~\ref{lem:reg_L2_normal_derivative}, the normal derivative of the first eigenvector $\widetilde{u}^{\varepsilon}_0$ is in $L^{2}(\partial\widetilde{\Omega}_{\varepsilon})$. This is important to identify the first exit point distribution as
\begin{align}\label{law of tildeX_t}\frac{\partial_{n} \widetilde{u}_{0}^{\varepsilon}(x) \, \sigma(dx)}{\displaystyle\int_{\partial\Omega}\partial_{n}\widetilde{u}_{0}^{\varepsilon}(y)\,\sigma(\mathrm{d}y)},
\end{align}
where $\sigma$ here denotes the Lebesgue surface measures on $\partial \widetilde{\Omega}_{\varepsilon}$. From a more technical viewpoint, this will be useful  to prove that $\partial_n \varphi^\varepsilon$ is an excellent approximation of~$\partial_n \widetilde{u}_0^\varepsilon$ on $\widetilde \Gamma^\varepsilon_{\mathbf D}$. 

Since~$-\Delta f_k = 1$ away from~$x^{(k)}$ (see~\eqref{laplacian fk} below), one immediately gets that the following equality holds pointwise in~$\widetilde{\Omega}_\varepsilon$ (using the notation $\overline{K}_{\varepsilon}$ introduced in~\eqref{eq:barK}):
\[
  -\Delta{\varphi}^{\varepsilon} =- \frac{1}{\sqrt{\pi}}\sum_{k=1}^{N}K^{(k)}_{\varepsilon} = \overline{K}_\varepsilon {\varphi}^{\varepsilon}+ \mathrm{o}(1).
  \]
  Note that we crucially use here that~$\varphi^\varepsilon$ is bounded on~$\widetilde{\Omega}_\varepsilon$ uniformy in~$\varepsilon \in (0,\varepsilon_0)$. This calculation suggests that~$\widetilde{\lambda}_{0}^{\varepsilon}$ is at dominant order equal to $\overline{K}_\varepsilon$, as made precise in Theorem~\ref{asymlambda N hole}. Let us recall that this result characterizes the law of the
first exit time $\widetilde{\tau}_\varepsilon$, which is indeed exponential with parameter $\widetilde{\lambda}_{0}^{\varepsilon}$.
\begin{theo}\label{asymlambda N hole}
The first eigenvalue  $\widetilde{\lambda}_{0}^{\varepsilon}$  of the system \eqref{mixed} satisfies, in the limit $\varepsilon \to 0$,  
\begin{align}\label{lambdaNhole}
    \widetilde{\lambda}_{0}^{\varepsilon}= \overline{K}_{\varepsilon}+\mathrm{O}\left(\overline{K}^{2}_{\varepsilon}\right).
\end{align}
\end{theo}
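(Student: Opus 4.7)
The plan is to use $\varphi^\varepsilon$ as a quasimode for~$\widetilde{\mathcal L}^\varepsilon$ with target eigenvalue~$\overline K_\varepsilon$, and then combine a min--max upper bound with a spectral-gap lower bound. The crucial simplification afforded by working on~$\widetilde\Omega_\varepsilon$ is that $\varphi^\varepsilon\in\mathcal D(\widetilde{\mathcal L}^\varepsilon)$: by the very definition of~$\widetilde\Gamma^\varepsilon_{\mathbf D}$ as a connected component of $(\varphi^\varepsilon)^{-1}\{0\}$ one has $\varphi^\varepsilon=0$ on $\widetilde\Gamma^\varepsilon_{\mathbf D}$, and $\partial_n\varphi^\varepsilon=0$ on $\widetilde\Gamma^\varepsilon_{\mathbf N}\subset\partial\Omega$ is a direct consequence of~\eqref{normal of f_k}.

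\textbf{First step: algebraic identity.} Using $\Delta\log|\,\cdot-x^{(k)}|=0$ on $\widetilde\Omega_\varepsilon$ and $\Delta((1-|x|^{2})/4)=-1$, one obtains $\widetilde{\mathcal L}^\varepsilon\varphi^\varepsilon=-\Delta\varphi^\varepsilon=-\overline K_\varepsilon/\sqrt\pi$, and hence
\[
(\widetilde{\mathcal L}^\varepsilon-\overline K_\varepsilon)\varphi^\varepsilon=\frac{\overline K_\varepsilon}{\sqrt\pi}\sum_{k=1}^{N}K^{(k)}_\varepsilon f_k.
\]
Each $f_k$ has only a logarithmic singularity at $x^{(k)}\notin\overline{\widetilde\Omega_\varepsilon}$, so $\|f_k\|_{L^2(\widetilde\Omega_\varepsilon)}$ is bounded uniformly in $\varepsilon$, and the triangle inequality then yields $\|(\widetilde{\mathcal L}^\varepsilon-\overline K_\varepsilon)\varphi^\varepsilon\|_{L^2}=\mathrm O(\overline K_\varepsilon^{\,2})$. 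Lemma~\ref{levelset} gives $|\widetilde\Omega_\varepsilon|=\pi-\mathrm O(\mathrm e^{-c/\overline K_\varepsilon})$, and combining this with the uniform $L^1$-bound on $f_k$ one derives $\|\varphi^\varepsilon\|_{L^2}^{2}=1+\mathrm O(\overline K_\varepsilon)$ and $\int_{\widetilde\Omega_\varepsilon}\varphi^\varepsilon=-\sqrt\pi+\mathrm O(\overline K_\varepsilon)$.

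\textbf{Upper bound via Rayleigh quotient.} The admissibility of $\varphi^\varepsilon$ makes integration by parts legitimate (the boundary term vanishes since $\partial_n\varphi^\varepsilon=0$ on $\widetilde\Gamma^\varepsilon_{\mathbf N}$ and $\varphi^\varepsilon=0$ on $\widetilde\Gamma^\varepsilon_{\mathbf D}$), so
\[
\langle\varphi^\varepsilon,\widetilde{\mathcal L}^\varepsilon\varphi^\varepsilon\rangle=-\frac{\overline K_\varepsilon}{\sqrt\pi}\int_{\widetilde\Omega_\varepsilon}\varphi^\varepsilon=\overline K_\varepsilon+\mathrm O(\overline K_\varepsilon^{\,2}).
\]
Dividing by $\|\varphi^\varepsilon\|_{L^2}^{2}=1+\mathrm O(\overline K_\varepsilon)$ and applying the min--max principle yields $\widetilde\lambda_0^\varepsilon\le\overline K_\varepsilon+\mathrm O(\overline K_\varepsilon^{\,2})$.

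\textbf{Lower bound and main obstacle.} Write $\varphi^\varepsilon=c_0\widetilde u_0^\varepsilon+\psi$ with $\psi\perp\widetilde u_0^\varepsilon$. Theorem~\ref{One eigenvalue results N holes} isolates $\widetilde\lambda_0^\varepsilon$ in $[0,c\overline K_\varepsilon]$ from the rest of the spectrum; complemented by the fact that the second eigenvalue $\widetilde\lambda_1^\varepsilon$ stays bounded below by a positive constant independent of $\varepsilon$ (a point that must be argued separately, morally by comparison with the first positive Neumann eigenvalue of the unit disk), the spectral-projection Lemma~\ref{estimate} converts the residual bound of the first step into $\|\psi\|_{L^2}=\mathrm O(\overline K_\varepsilon^{\,2})$ and $c_0^{\,2}=1+\mathrm O(\overline K_\varepsilon)$. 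Inserting these estimates into $c_0^{\,2}(\widetilde\lambda_0^\varepsilon-\overline K_\varepsilon)^{2}\le\|(\widetilde{\mathcal L}^\varepsilon-\overline K_\varepsilon)\varphi^\varepsilon\|^{2}_{L^2}=\mathrm O(\overline K_\varepsilon^{\,4})$ yields $|\widetilde\lambda_0^\varepsilon-\overline K_\varepsilon|=\mathrm O(\overline K_\varepsilon^{\,2})$. The delicate step is precisely this uniform spectral separation: all the algebraic identities above are direct, but without a gap that does not shrink with $\varepsilon$ one would only translate the $\mathrm O(\overline K_\varepsilon^{\,2})$ residual into an $\mathrm O(\overline K_\varepsilon)$ eigenvalue error, one order short of the target.
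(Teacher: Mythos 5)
Your argument is correct, and it rests on the same two pillars as the paper's proof: the quasimode $\varphi^\varepsilon$, which lies in $\mathcal D(\widetilde{\mathcal L}^\varepsilon)$ by construction of the modified domain, and the uniform-in-$\varepsilon$ lower bound on $\widetilde\lambda_1^\varepsilon$ (which you rightly single out as the delicate prerequisite; the paper supplies it via (R2) and Lemma~\ref{lower bound}, i.e.\ a uniform Poincar\'e--Wirtinger constant for the family $\widetilde\Omega_\varepsilon$, not by comparison with the unperturbed disk). Where you diverge is in how the eigenvalue is extracted. The paper normalizes $\varphi^\varepsilon$ into $\psi^\varepsilon$, writes $\widetilde\lambda_0^\varepsilon=-\langle\Delta\psi^\varepsilon,\pi_{[0,c\overline K_\varepsilon]}\psi^\varepsilon\rangle\,(1+\mathrm O(\overline K_\varepsilon^{2}))$, controls the cross term with $(1-\pi)\psi^\varepsilon$ by the product of two $\mathrm O(\overline K_\varepsilon)$ factors, and then computes $\langle\Delta\varphi^\varepsilon,\varphi^\varepsilon\rangle$ term by term. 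You instead measure the residual against the \emph{shifted} operator, observing that $(\widetilde{\mathcal L}^\varepsilon-\overline K_\varepsilon)\varphi^\varepsilon=\frac{\overline K_\varepsilon}{\sqrt\pi}\sum_k K^{(k)}_\varepsilon f_k$ has $L^2$ norm $\mathrm O(\overline K_\varepsilon^{2})$ outright, and then read off $|\widetilde\lambda_0^\varepsilon-\overline K_\varepsilon|\le \|(\widetilde{\mathcal L}^\varepsilon-\overline K_\varepsilon)\varphi^\varepsilon\|/|c_0|$ from the orthogonal (Pythagorean) decomposition along the simple first eigenspace. This is slightly slicker: the second-order accuracy is visible already at the level of the residual, your separate Rayleigh-quotient upper bound becomes redundant, and your claim $\|\psi\|_{L^2}=\mathrm O(\overline K_\varepsilon^{2})$ (sharper than the $\mathrm O(\overline K_\varepsilon)$ of Proposition~\ref{estimate}, which measures the residual against $0$ rather than $\overline K_\varepsilon$) does follow from the same contour/resolvent argument applied to the shifted residual, though for the final estimate only $|c_0|$ bounded away from zero is actually needed. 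Both routes deliver \eqref{lambdaNhole}.
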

This result is fully consistent with results obtained in the physics literature on the mean first passage time, whose inverse indeed has the same asymptotic behavior as $\widetilde{\lambda}_{0}^{\varepsilon}$, see in particular~\cite{SSH2} for our specific context of a particle confined in a disk. Using formal asymptotic expansions, the authors obtain the same asymptotic behavior, with additional results concerning the dependency on the initial distribution and higher order terms in the expansion.
 
The final result concerns the law of the first exit point $X_{\widetilde{\tau}_\varepsilon}$, which is given by~\eqref{law of tildeX_t}. Indeed, one can prove the following asymptotic behavior of the normal derivative of $\widetilde{u}_{0}^{\varepsilon}$ as $\varepsilon \to 0$.

\begin{theo}\label{exit point}
Let $k\in\{1,\dots,N\}$. Let $\widetilde{u}_{0}^{\varepsilon}$ 
 be the eigenfunction of $\widetilde{\mathcal L}^{\varepsilon}$ associated with the first eigenvalue $\widetilde{\lambda}_{0}^{\varepsilon}$ (see~\eqref{mixed}) and normalized as~\eqref{eq:normalization_widetilde_u_0_eps}. Then,
in the limit $\varepsilon\rightarrow0$,
 \begin{align}\label{normal derivatice on Gamma D}
    \int_{\widetilde{\Gamma}^{\varepsilon}_{\mathbf{D}}} \partial_n \widetilde{u}_{0}^{\varepsilon}=\sqrt{\pi}\, \overline{K}_\varepsilon+\mathrm{O}\left(\overline{K}_\varepsilon^2\right).
    \end{align}
   Moreover, in the limit $\varepsilon\rightarrow0$, for all $k\in\{1,\dots,N\}$, 
   \begin{align}\label{normal derivatice on Gamma Dk}
    \int_{\widetilde{\Gamma}^{\varepsilon}_{\mathbf{D} _k}} \partial_n\widetilde{u}_{0}^{\varepsilon}=\sqrt{\pi}{K}^{(k)}_\varepsilon+\mathrm{O}\left(\overline{K}^{3/2}_\varepsilon\right).
    \end{align}
    As a consequence, in the limit $\varepsilon\rightarrow0$, for all $k\in\{1,\dots,N\}$, 
    \begin{align}\label{resultat proba}
\mathbb{P}_{\widetilde{\nu}_{0}^{\varepsilon}}\left[X_{\widetilde{\tau}_\varepsilon}\in\Gamma^{\varepsilon}_{\mathbf{D} _k}\right]=\frac{{K}^{(k)}_\varepsilon}{\overline{K}_\varepsilon}+\mathrm{O}\left(\sqrt{\overline{K}_\varepsilon}\right).
    \end{align}
\end{theo}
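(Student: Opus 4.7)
The plan is to prove each of the three estimates in turn, relying on Green's identities applied to the quasimode $\varphi^\varepsilon$ from~\eqref{QuasiNhole} together with the eigenvalue asymptotics of Theorem~\ref{asymlambda N hole}.

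For~\eqref{normal derivatice on Gamma D}, I would integrate the eigenvalue equation $-\Delta \widetilde u_0^\varepsilon = \widetilde\lambda_0^\varepsilon \widetilde u_0^\varepsilon$ over $\widetilde\Omega_\varepsilon$; the divergence theorem combined with the Neumann condition on $\widetilde\Gamma^\varepsilon_{\mathbf N}$ yields
\[
\int_{\widetilde\Gamma^\varepsilon_{\mathbf D}} \partial_n \widetilde u_0^\varepsilon \, \mathrm{d}\sigma = -\widetilde\lambda_0^\varepsilon \int_{\widetilde\Omega_\varepsilon} \widetilde u_0^\varepsilon \, \mathrm{d}x.
\]
The $L^2$ quasimode bound $\|\widetilde u_0^\varepsilon - \varphi^\varepsilon\|_{L^2(\widetilde\Omega_\varepsilon)} = \mathrm{O}(\overline K_\varepsilon)$, which comes out of the spectral analysis behind Theorem~\ref{asymlambda N hole}, combined with the direct computation $\int_{\widetilde\Omega_\varepsilon}\varphi^\varepsilon = -\sqrt\pi + \mathrm{O}(\overline K_\varepsilon)$ (using $|\widetilde\Omega_\varepsilon| = \pi + \mathrm{o}(1)$ from Lemma~\ref{levelset} and the uniform integrability of each $f_k$ despite its logarithmic singularity) gives $\int_{\widetilde\Omega_\varepsilon}\widetilde u_0^\varepsilon = -\sqrt\pi + \mathrm{O}(\overline K_\varepsilon)$. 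Estimate~\eqref{normal derivatice on Gamma D} then follows from $\widetilde\lambda_0^\varepsilon = \overline K_\varepsilon + \mathrm{O}(\overline K_\varepsilon^2)$.

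For~\eqref{normal derivatice on Gamma Dk}, the key idea is to apply Green's second identity to the pair $(\widetilde u_0^\varepsilon, f_k)$. Since $x^{(k)} \notin \overline{\widetilde\Omega_\varepsilon}$, the function $f_k$ is smooth on $\overline{\widetilde\Omega_\varepsilon}$ with $\Delta f_k = -1$, and a direct calculation on the unit circle shows $\partial_n f_k = 0$ on $\widetilde\Gamma^\varepsilon_{\mathbf N} \subset \partial\Omega$. Together with the boundary conditions for $\widetilde u_0^\varepsilon$, this reduces Green's identity to
\[
\int_{\widetilde\Gamma^\varepsilon_{\mathbf D}} f_k \, \partial_n \widetilde u_0^\varepsilon \, \mathrm{d}\sigma = \int_{\widetilde\Omega_\varepsilon} \widetilde u_0^\varepsilon - \widetilde\lambda_0^\varepsilon \int_{\widetilde\Omega_\varepsilon} f_k \widetilde u_0^\varepsilon = -\sqrt\pi + \mathrm{O}(\overline K_\varepsilon).
\]
I would then decompose the left-hand side into the $N$ contributions coming from the $\widetilde\Gamma^\varepsilon_{\mathbf D_j}$. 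The defining relation $\varphi^\varepsilon = 0$ on $\widetilde\Gamma^\varepsilon_{\mathbf D_k}$ rearranges into $K^{(k)}_\varepsilon f_k = -1 - \sum_{j \ne k} K^{(j)}_\varepsilon f_j$, and since each $f_j$ with $j \ne k$ is smooth and bounded near $x^{(k)}$, one infers $f_k \big|_{\widetilde\Gamma^\varepsilon_{\mathbf D_k}} = -1/K^{(k)}_\varepsilon + \mathrm{O}(\overline K_\varepsilon / K^{(k)}_\varepsilon)$. On $\widetilde\Gamma^\varepsilon_{\mathbf D_j}$ for $j \ne k$, the function $f_k$ is uniformly bounded with only exponentially small variation across the curve, by Lemma~\ref{levelset}. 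Setting $I_j := \int_{\widetilde\Gamma^\varepsilon_{\mathbf D_j}} \partial_n \widetilde u_0^\varepsilon$, which is nonnegative by Hopf's lemma (as $\widetilde u_0^\varepsilon < 0$ vanishes on $\widetilde\Gamma^\varepsilon_{\mathbf D}$) and satisfies $\sum_j I_j = \mathrm{O}(\overline K_\varepsilon)$ by the previous step, the above identity becomes the algebraic relation
\[
-\frac{I_k}{K^{(k)}_\varepsilon}\bigl(1 + \mathrm{O}(\overline K_\varepsilon)\bigr) + \sum_{j \ne k} f_k(x^{(j)})\, I_j = -\sqrt\pi + \mathrm{O}(\overline K_\varepsilon),
\]
in which the second term on the left is $\mathrm{O}(\overline K_\varepsilon)$. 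Solving for $I_k$ yields $I_k = \sqrt\pi K^{(k)}_\varepsilon + \mathrm{O}(K^{(k)}_\varepsilon \overline K_\varepsilon) = \sqrt\pi K^{(k)}_\varepsilon + \mathrm{O}(\overline K_\varepsilon^{3/2})$, using $K^{(k)}_\varepsilon \leq \overline K_\varepsilon \leq \sqrt{\overline K_\varepsilon}$.

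Finally, \eqref{resultat proba} follows by forming the ratio $I_k / \sum_j I_j$ (legitimate through Lemma~\ref{lem:reg_L2_normal_derivative} and~\eqref{law of tildeX_t}) and expanding $1/(1 + \mathrm{O}(\overline K_\varepsilon))$. The main technical obstacle I anticipate lies in precisely controlling the value of $f_k$ on the non-circular curve $\widetilde\Gamma^\varepsilon_{\mathbf D_k}$; the sandwich description of Lemma~\ref{levelset} is essential to derive the $\mathrm{O}(\overline K_\varepsilon/K^{(k)}_\varepsilon)$ relative error that underlies the second step. A subsidiary ingredient is the $L^2$-quasimode bound on $\widetilde u_0^\varepsilon - \varphi^\varepsilon$ used throughout, which naturally accompanies the proof of Theorem~\ref{asymlambda N hole}.
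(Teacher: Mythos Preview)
Your argument is correct and takes a genuinely different route from the paper for~\eqref{normal derivatice on Gamma Dk}. The paper first computes $\int_{\widetilde\Gamma^\varepsilon_{\mathbf D_k}}\partial_n\varphi^\varepsilon$ explicitly (via Stokes' formula on the sliver between $\widetilde\Gamma^\varepsilon_{\mathbf D_k}$ and a small circular arc around $x^{(k)}$), and then controls the remainder $\int_{\widetilde\Gamma^\varepsilon_{\mathbf D_k}}\partial_n(\widetilde u_0^\varepsilon-\varphi^\varepsilon)$ by means of an $H^1$ estimate $\|\nabla(\widetilde u_0^\varepsilon-\varphi^\varepsilon)\|_{L^2}=\mathrm O(\overline K_\varepsilon^{3/2})$ combined with a smooth cutoff localizing to the $k$-th window; this is what produces the $\mathrm O(\overline K_\varepsilon^{3/2})$ error. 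By contrast, your Green's identity with the test function $f_k$ converts everything to volume integrals at once, and the key extra ingredient you use---Hopf's lemma, which gives $\partial_n\widetilde u_0^\varepsilon\geq 0$ on $\widetilde\Gamma^\varepsilon_{\mathbf D}$---is precisely what lets you bound the cross terms $\int_{\widetilde\Gamma^\varepsilon_{\mathbf D_j}} f_k\,\partial_n\widetilde u_0^\varepsilon$ by $C I_j$ rather than by $C\|\partial_n\widetilde u_0^\varepsilon\|_{L^1(\widetilde\Gamma^\varepsilon_{\mathbf D_j})}$. Note that your ``technical obstacle'' about the value of $f_k$ on $\widetilde\Gamma^\varepsilon_{\mathbf D_k}$ is in fact no obstacle: the defining relation $\varphi^\varepsilon=0$ gives the exact pointwise identity $f_k=-1/K^{(k)}_\varepsilon-\sum_{j\ne k}(K^{(j)}_\varepsilon/K^{(k)}_\varepsilon)f_j$ there, and the sign condition then yields $\int_{\widetilde\Gamma^\varepsilon_{\mathbf D_k}} f_k\,\partial_n\widetilde u_0^\varepsilon = -I_k/K^{(k)}_\varepsilon + \mathrm O(\overline K_\varepsilon I_k/K^{(k)}_\varepsilon)$ without any geometric estimate on the curve itself. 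Your approach is in fact slightly sharper (it gives $\mathrm O(\overline K_\varepsilon^2)$ rather than $\mathrm O(\overline K_\varepsilon^{3/2})$) and avoids both the gradient estimate and the cutoff; the paper's approach, on the other hand, does not rely on the sign of $\partial_n\widetilde u_0^\varepsilon$ and so would transfer more readily to situations where Hopf's lemma is unavailable. For~\eqref{normal derivatice on Gamma D}, your direct integration of the eigenvalue equation is also simpler than the paper's route, which passes through the identity $\widetilde\lambda_0^\varepsilon = \langle\nabla\varphi^\varepsilon,\nabla\widetilde u_0^\varepsilon\rangle/\|\pi_{[0,c\overline K_\varepsilon]}\varphi^\varepsilon\|$ before integrating by parts against $1/\sqrt\pi+\varphi^\varepsilon$.
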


Note that the expression~\eqref{normal derivatice on Gamma D} is not obtained by summing~\eqref{normal derivatice on Gamma Dk} over $k\in \{1, \ldots,N\}$, as the error term in~\eqref{normal derivatice on Gamma D} is smaller than the one in~\eqref{normal derivatice on Gamma Dk}. 

In order for this result to be useful, the remainder term of
order~$\overline{K}_\varepsilon^{1/2}$ in~\eqref{resultat proba}
should be small compared to
$K^{(k)}_\varepsilon/\overline{K}_\varepsilon$, that is
${K}^{(k)}_\varepsilon$ should be of order at most~$\overline{K}_\varepsilon^{3/2}$. This can be illustrated on two prototypical situations:
\begin{itemize}
\item If all the windows shrink with the same scaling,
  namely if for all $k \in \{1, \ldots, N\}$,
  $\mathrm{e}^{-1/K^{(k)}_{\varepsilon}}=a_k \varepsilon$ for some
  positive real numbers $(a_k)_{k=1,\ldots,N}$, then $K^{(k)}_{\varepsilon}=-\frac{1}{\log a_k + \log \varepsilon}$: in this case, $\mathbb{P}_{\widetilde{\nu}_{0}^{\varepsilon}}\left[X_{\widetilde{\tau}_\varepsilon}\in\Gamma^{\varepsilon}_{\mathbf{D} _k}\right]$ converges to $1/N$ whatever the values of $(a_k)_{k=1,\ldots,N}$.
\item If the windows shrink at different scales, namely if for all $k
  \in \{1, \ldots,
  N\}$~$\mathrm{e}^{-1/K^{(k)}_{\varepsilon}}=\varepsilon^{a_k}$ for
  some positive real numbers $(a_k)_{k=1,\ldots,N}$, then $K^{(k)}_{\varepsilon}=-\frac{1}{a_k \log \varepsilon}$: in this case, $\mathbb{P}_{\widetilde{\nu}_{0}^{\varepsilon}}\left[X_{\widetilde{\tau}_\varepsilon}\in\Gamma^{\varepsilon}_{\mathbf{D} _k}\right]$ converges to $\frac{1/a_k}{\sum_{k=1}^N 1/a_k}$. Of course, windows which shrink at slower scales are more likely exits.
\end{itemize}


\subsection{Bibliographic comments and perspectives}\label{sec:biblio}
As already mentioned in the introduction, the narrow escape problem attracted a lot of attention from the physics community,
and early works on the asymptotic behavior of the mean first exit time
for a single infinitely small absorbing window
can for example be found
in~\cite{grigoriev2002kinetics,SSH,SSH2,singer2008narrow}, and
in~\cite{bib2,grebenkov2019full} where the authors are particularly
interested in studying the role of the initial condition. These works
typically rely on formal expansions of Neumann Green's function for
the Laplacian in the domain of interest, with Dirac masses at the
escape points, and on results from potential
theory~\cite{kellogg1967,england1979mixed,Ammari1}. In particular,
explicit asymptotic results can be obtained for domains with specific forms for which these Green's functions are analytically known (disks, balls, squares, etc). These early results have been extended to general domains and a finite number of infinitely small absorbing windows in~\cite{PWK,CWS}, using the method of matched asymptotic expansions. Similar techniques are used in \cite{li2014,LiLin2023} to study another geometry, namely a domain
which is composed of a relatively big head and several absorbing
narrow necks, which leads to  Neumann--Robin instead of Dirichlet
boundary conditions on the small exit regions; see
also~\cite{dagdug2003equilibration} for a study of two chambers
connected by a narrow tube. Let us also mention related works on the
analysis of how the eigenelements are perturbed by introducing small
Dirichlet boundary conditions in~\cite{ward1993strong}. There is thus a very large body of physics literature on this subject - our aim here is not to be exhaustive.

Rigorous derivations of the first and second order terms in the asymptotic expansion of the mean first exit time have been obtained in~\cite{Ammari2} using layer potential techniques from~\cite{Ammari1}, see also~\cite{nursultanov2021mean} for generalizations to Brownian particles on Riemannian manifolds. More recently,  mathematical proofs of the results from~\cite{CWS} have been obtained in~\cite{ChenFriedman}, where the authors prove the asymptotic expansion rigorously using subsolutions and supersolutions.

As mentioned earlier, our results differ from what has been done previously in two ways. First, our work is, to the best of our knowledge, the first to provide mathematical results on  the first exit point distribution (see however results in~\cite{chevalier2010first,cheviakov2011optimizing} on so-called splitting probabilities, a.k.a. committor functions, which are derived from formal asymptotic expansions: it would be interesting to get rigorous proofs of these results and compare them to what we obtain using the quasi-stationary distribution approach). Notice that studying the first exit point distribution requires asymptotic estimates in high Sobolev index norms (typically larger than $3/2$) on the first eigenvector, since this exit point distribution is expressed in terms of the normal derivative of the first eigenvector on the boundary, see~\eqref{law of X_t}. 
Second, the mathematical approach and tools used in our work to study the narrow escape problem (namely the quasi-stationary distribution approach to metastability and spectral techniques in the spirit of those used in semi-classical analysis) also seem to be new. As mentioned above, semi-classical techniques have been used a lot to study the exit problem in the case of an energetic barrier in the small temperature regime, see for example~\cite{HelfferNierKlein,Helffernier, LelievreNier, GesGiaLeliPeutrec, GesuLelievrePeutrecNectoux2, PeutrecMichel, LePeutrecNectoux, lelievrePeutrecNectoux1,nectoux2021mean,LelievrePeutrecNectoux,Nectoux}: our work is a first attempt to adapt these techniques to the case of entropic barriers. A major difference between the energetic and the entropic case is that for entropic barriers, the 1-eigenforms do not concentrate around the exit region as fast as for energetic barriers (there are no counterparts to Agmon estimates). We circumvent this difficulty by using an excellent quasimode of the 0-eigenform, as outlined above. To conclude this literature review, let us point out the work~\cite{hillairet2010eigenvalues} where spectral analytic tools are used to study the asymptotic behavior of the eigenvalues of the Laplacian with mixed Dirichlet-Neumann boundary conditions but in another geometric framework, namely domains with small slits.

This work opens many perspectives that we would like to explore in the
future. First, it would be interesting to prove that modifying the
domain from $\Omega$ to $\widetilde \Omega_\varepsilon$ (as expplained
in the previous section) indeed does not modify the asymptotic results that we have obtained on the exit event. Moreover, it should be possible to apply the same method to study the narrow escape problem in a general bounded domain in $\mathbb{R}^{d}$. Besides, it would be interesting to investigate if similar techniques can be used to study the narrow escape problem for the underdamped Langevin dynamics (commonly used in molecular dynamics): this is more challenging since the associated infinitesimal generator is non-selfadjoint and hypoelliptic, and this has major implications on the functional setting (in particular to state the boundary conditions, see e.g. \cite{LelievreRamilReygner} for more details) and on the spectral properties of the operator. Finally, as already mentioned above, the quasi-stationary approach to metastability is deeply linked with a class of numerical methods to efficienlty sample the exit events of metastable dynamics~\cite{di2016jump,lelievre-15,perez2024recent}. For example,  it would be interesting to study how the results we have obtained can be used to obtain generalizations of the temperature accelerated dynamics algorithm~\cite{sorensen-voter-00} in our context (the size of the exit windows playing the role of the temperature).

\subsection{Outline of the paper}
In Section~\ref{Spectral analysis}, we investigate the spectral properties of the Laplacian with mixed Dirichlet--Neumann boundary conditions.  In Section~\ref{Section law of first exit time}, we present the results regarding the law of the first exit time. Section~\ref{Section Law of the first exit point} focuses on the law of the  first exit point.  In Appendix~\ref{Laplacian p forms}, we present the spectral analysis of the Laplacian on $p$-forms with mixed tangential-normal boundary conditions. Appendix~\ref{app:3d} illustrates the generality of the approach presented in this work by quickly outlining how it could be applied to the three-dimensional
ball.

Some of the theoretical results on the spectral properties of the $p$-Laplacian with mixed tangential-normal boundary conditions  are illustrated along the manuscript by numerical experiments, using in particular Raviart--Thomas orthogonal finite elements (a.k.a. Nedelec finite elements) in order to obtain stable discretizations~\cite{arnold2014periodic}.


\section{Spectral analysis}\label{Spectral analysis}

In this section, we perform the spectral analysis of the operator
$\mathcal{L}^{\varepsilon}$. In Section
\ref{properties of L},  we properly define  the operator
$\mathcal{L}^{\varepsilon}$ and we introduce a few properties which
yield the proof of item $(i)$ of Theorem \ref{One eigenvalue results N holes}, as well as a
result concerning the second eigenvalue of  the operator~$\mathcal{L}^{\varepsilon}$.  The proof of item $(ii)$ of Theorem \ref{One eigenvalue results N holes} is given in Section \ref{Proof of Theo 1.1}. A numerical illustration of our theoretical findings is given in Section~\ref{Numerical illustration of 1.1}.

In all this section, we consider the operator
$\mathcal{L}^{\varepsilon}$ with domain~$\mathcal
D(\mathcal{L}^{\varepsilon})$, see~\eqref{domain}. Our aim is to
provide spectral properties of the operator
$\mathcal{L}^{\varepsilon}$, that will actually also hold for the
mixed Laplacian on the modified domain~$\widetilde\Omega_\varepsilon$
defined by~\eqref{The regularity domain},
denoted by~$\widetilde  {\mathcal L}^{\varepsilon}$
(with domain~\eqref{domaintilde}).

\subsection{Definition and first properties of the operator \texorpdfstring{$\mathcal{L}^{\varepsilon}$}{}}\label{properties of L}

Let us first study the spectral properties of the operator $\mathcal{L}^{\varepsilon}$, namely the mixed Laplacian in the unit disk $\Omega$ (see Figure~\ref{Narrow}). Recall that the boundary of the domain~$\Omega$ contains $N$ small disjoint exit regions $(\Gamma^{\varepsilon}_{\mathbf{D}_{k}})_{k= 1,\dots, N}$  of respective sizes~$\mathrm{e}^{-1/K^{(k)}_{\varepsilon}}$ (recall~\eqref{assumption on Gamma_D}). The boundary $\partial \Omega$ is thus composed of two
parts:
$\Gamma^{\varepsilon}_{\mathbf{D}}={\cup}_{k=1}^{N}\Gamma^{\varepsilon}_{\mathbf{D}_{k}}
$  which is the absorbing part  and $\Gamma^{\varepsilon}_{\mathbf{N}}$ which is the reflecting part.
We consider the mixed Dirichlet--Neumann Laplacian eigenvalue
problem~\eqref{mixed on disk}  on $\Omega$.  Let us introduce 
\begin{align}\label{defH1Nholes}
  H^{1}_{0,\Gamma^{\varepsilon}_{\mathbf{D}}}(\Omega):= \left\{u \in H^{1}(\Omega), \, \, u_{|_{\Gamma^{\varepsilon}_{\mathbf{D}}}}=0 \right\} ,
\end{align}
the space of functions in $H^{1}(\Omega)$ whose trace vanishes on $\Gamma^{\varepsilon}_{\mathbf{D}}$. 

\begin{prop}\label{def of domain}
The quadratic form
\begin{align}
    Q(u)=\int_{\Omega}{\vert \nabla u\vert}^{2}
\end{align}
with form domain $\mathcal D(Q)=H^{1}_{0,\Gamma^{\varepsilon}_{\mathbf{D}}}(\Omega)$ is closed and symmetric. Its Friedrichs extension is the operator
$\mathcal{L}^{\varepsilon}=-\Delta$ 
with domain  
\begin{align}\label{Domain N holes}
    \mathcal{D}(\mathcal{L}^{\varepsilon})=\left\{u \in  H^{1}(\Omega), \, \, \Delta u \in L^{2}(\Omega), \, \,  \partial_{n}u_{|_{\Gamma^{\varepsilon}_{\mathbf{N}}}}=0,\, \,u_{|_{\Gamma^{\varepsilon}_{\mathbf{D}}}}=0 \right\}.
    \end{align} 
    The operator $(\mathcal{L}^{\varepsilon},\mathcal{D}(\mathcal{L}^{\varepsilon}))$ is nonnegative and selfadjoint.
\end{prop}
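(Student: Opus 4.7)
The plan is to apply the Friedrichs extension theorem to the quadratic form $Q$. The form is manifestly symmetric and nonnegative, and it is densely defined on $L^2(\Omega)$ since $C_c^\infty(\Omega)\subset\mathcal{D}(Q)$. The key point is closedness: the form norm $(Q(u)+\|u\|_{L^2}^2)^{1/2}$ coincides with the $H^1(\Omega)$ norm, so it suffices to prove that $H^1_{0,\Gamma^\varepsilon_{\mathbf{D}}}(\Omega)$ is a closed subspace of $H^1(\Omega)$. This follows from the continuity of the trace operator $\gamma:H^1(\Omega)\to H^{1/2}(\partial\Omega)$ composed with the (continuous) restriction to $\Gamma^\varepsilon_{\mathbf{D}}$: the form domain is then the kernel of a continuous linear map, hence closed.

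The Friedrichs extension theorem then produces a unique nonnegative selfadjoint operator whose associated quadratic form is $Q$. It remains to identify its abstract Friedrichs domain with the explicit set~\eqref{Domain N holes}. For $u$ in the Friedrichs domain, there exists $f\in L^2(\Omega)$ such that $Q(u,v)=\int_\Omega fv$ for all $v\in\mathcal{D}(Q)$. Testing against $v\in C_c^\infty(\Omega)$ gives $-\Delta u=f$ in the distributional sense, and hence $\Delta u\in L^2(\Omega)$. Since $u\in H^1(\Omega)$ and $\Delta u\in L^2(\Omega)$, the normal trace $\partial_n u$ is well defined as an element of $H^{-1/2}(\partial\Omega)$ through the standard duality formula for vector fields in $H(\mathrm{div},\Omega)$. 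Testing against general $v\in\mathcal{D}(Q)$ and using Green's formula then yields
\begin{align*}
\langle\partial_n u,\gamma v\rangle_{H^{-1/2}(\partial\Omega),H^{1/2}(\partial\Omega)}=0
\end{align*}
for all such~$v$, which encodes the Neumann condition $\partial_n u=0$ on~$\Gamma^\varepsilon_{\mathbf{N}}$ in the weak sense, the traces $\gamma v$ spanning a dense subspace of the $H^{1/2}$ functions on $\partial\Omega$ supported away from $\Gamma^\varepsilon_{\mathbf{D}}$. The reverse inclusion is immediate by reading Green's formula in the other direction.

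The main technical subtlety is the proper interpretation of the Neumann boundary condition. At this stage one only knows $u\in H^1(\Omega)$ and $\Delta u\in L^2(\Omega)$, so the normal derivative a priori lives only in $H^{-1/2}(\partial\Omega)$; the sharper regularity $\partial_n u\in L^2(\partial\Omega)$ available via Lemma~\ref{lem:reg_L2_normal_derivative} is specific to the modified domain~$\widetilde{\Omega}_\varepsilon$, where the Dirichlet and Neumann portions of the boundary meet at a right angle, and cannot be invoked on the original domain~$\Omega$ where they are tangent at the endpoints of the exit regions. Working only with the $H^{-1/2}$ trace is nevertheless sufficient to characterize the Friedrichs domain, and no finer information on the geometry of the interface is needed here.
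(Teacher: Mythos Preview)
Your proof is correct and follows essentially the same approach as the paper: both argue closedness of~$Q$ via the identification of the form norm with the~$H^1$ norm (the paper frames this as taking the closure of the form on smooth functions vanishing on~$\Gamma^\varepsilon_{\mathbf{D}}$, you argue directly that~$H^1_{0,\Gamma^\varepsilon_{\mathbf{D}}}(\Omega)$ is the kernel of a continuous trace map), invoke the Friedrichs extension theorem, and identify the domain by first testing against~$C_c^\infty(\Omega)$ to get~$-\Delta u=f$ in~$L^2$, then using Green's formula to extract the Neumann condition in~$H^{-1/2}(\partial\Omega)$. Your additional remarks on why one cannot expect more than~$H^{-1/2}$ regularity for~$\partial_n u$ on the original domain~$\Omega$ are accurate and align with the paper's later discussion.
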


\begin{proof}
    Let us consider the following quadratic form
    \begin{align*}
        u\in C_{\Gamma^{\varepsilon}_{\mathbf{D}}}^{\infty}(\Omega)\mapsto \int_{\Omega}{\vert \nabla u\vert}^{2},
    \end{align*}
    where $C_{\Gamma^{\varepsilon}_{\mathbf{D}}}^{\infty}(\Omega)$ is the set of functions belonging to~$C^\infty(\overline{\Omega})$ vanishing on~$\Gamma^{\varepsilon}_{\mathbf{D}}$. It is symmetric, nonnegative and closable. Its closure for the~$H^1$ norm is the quadratic form 
\begin{align*}
    Q:  u \in H^{1}_{0,\Gamma^{\varepsilon}_{\mathbf{D}}}(\Omega)\mapsto \int_{\Omega}{\vert \nabla u\vert}^{2}.
\end{align*} 
Let $\mathcal{L}^{\varepsilon}$ be the Friedrichs extension associated with the quadratic form $Q$. This operator is nonnegative self-adjoint on $L^{2}(\Omega)$, and defined on the domain
\begin{align*}
 \mathcal{D}(\mathcal{L}^{\varepsilon})=\left\{u \in H^{1}_{0,\Gamma^{\varepsilon}_{\mathbf{D}}}(\Omega) \, \middle| \,  \exists f\in L^{2}(\Omega),\, \forall v \in H^{1}_{0,\Gamma^{\varepsilon}_{\mathbf{D}}},\,Q(u,v)=\left\langle f,v\right\rangle_{L^{2}(\Omega)} \right\}
 \end{align*}
 by $\mathcal{L}^{\varepsilon} u=  -\Delta u=f$.
 It is then standard to check that, equivalently, the domain of the Laplacian with mixed Dirichlet--Neumann boundary conditions is given by~\eqref{Domain N holes}. Indeed, one first shows that~$-\Delta u = f$ in the sense of distributions, hence~$\Delta u \in L^2(\Omega)$ since~$f \in L^2(\Omega)$; and next sees that~$\partial_n u = 0$ on~$\Gamma^{\varepsilon}_{\mathbf{N}}$ as elements of~$H^{-1/2}(\partial \Omega)$ by an integration by parts. 
\end{proof}

\begin{rem}
  Concerning the regularity of the functions $u \in \mathcal{D}(\mathcal{L}^{\varepsilon})$, let us recall that because of the mixed boundary conditions, $u$ is not in $H^2(\Omega)$ (as would be the case for pure Dirichlet or Neumann boundary conditions) but only in $H^{3/2-\alpha}(\Omega)$ for any $\alpha > 0$ (this result holds for any $C^{1,1}$ domain). Actually it can be shown that $u\in B_{2,\infty}^{3/2}(\Omega)$,   where $B_{2,\infty}^{3/2}(\Omega)$ is defined by interpolation: $B_{2,\infty}^{3/2}(\Omega)=[H^{1}(\Omega),H^{2}(\Omega)]_{1/2,\infty}$.
  We refer for example to \cite{Savar} for more details. 
 \end{rem}

We are now in position to prove item {\em (i)} of Theorem~\ref{One eigenvalue results N holes}.

\begin{proof}[Proof of item (i) of Theorem \ref{One eigenvalue results N holes}]
We already stated in Proposition~\ref{def of domain} that $\mathcal{L}^{\varepsilon}$ is nonnegative and self-adjoint. It therefore only remains to prove that it has  compact resolvent. This is a consequence of the continuity of the inclusion $\mathcal{D}(\mathcal{L}^{\varepsilon})\subset H^{1}(\Omega)$, and of the compactness of the embedding $H^1(\Omega)\subset L^{2}(\Omega)$ (guaranteed by the boundedness of~$\Omega$). 
\end{proof}

A consequence of the above result is that the eigenvalues of the mixed Laplacian $\mathcal{L}^{\varepsilon}$, when ranked in increasing order and counted with their multiplicities, are given by the min-max principle: for $n=0$,
\begin{align}\label{min0}
    \lambda^{\varepsilon}_{0}={\underset{u\in H^{1}_{0,\Gamma^{\varepsilon}_{\mathbf{D}}}(\Omega)\backslash\{0\}}{\text{inf}}}\left\{ \frac{Q(u)}{\| u\|^{2}_{L^2(\Omega)}} \right\},
\end{align}
and, for $n \ge 1$,
\begin{align}\label{min}
    \lambda^{\varepsilon}_{n}=\underset{{E}_{n}:\,\, \text{dim}\,{E}_{n}=n}{\text{sup}}\,\,{\underset{u\in \left(H^{1}_{0,\Gamma^{\varepsilon}_{\mathbf{D}}}(\Omega)\backslash\{0\}\right)\cap{E}^{\perp}_{n}}{\text{inf}}} \left\{ \frac{Q(u)}{\| u\|^{2}_{L^2(\Omega)}} \right\}.
\end{align}

A corollary of~\eqref{min} is a classical estimate on the second
eigenvalue of the mixed Dirichlet--Neumann Laplacian eigenvalue
problem~\eqref{mixed on disk}, by comparing it to the second
eigenvalue of either the Neumann or the Dirichlet Laplacian. This
result will be useful in the proof of  item~{\em (ii)} of
Theorem~\ref{One eigenvalue results N holes} below.

 In order to state the result, let us introduce the Dirichlet Laplacian eigenvalue problem on~$\Omega$:
\begin{align}\label{Dirichlet}
 \left\{
    \begin{array}{rll}
      -\Delta  u^{\mathbf{D}} & \!\!\!\! =\lambda^{\mathbf{D}} u^{\mathbf{D}} & \quad \text{in} \,\,\Omega,\\
     u^{\mathbf{D}} & \!\!\!\! =0 & \quad \text{on}\,\,\partial\Omega.
     \end{array}
\right.
    \end{align}
    Since $\Omega$ is a smooth bounded domain,
    the spectrum of this operator  is discrete and consists of eigenvalues $(\lambda^{\mathbf{D}}_{i})_{i\in\mathbb{N}}$ having finite multiplicities such that
    \begin{align}
      0<\lambda^{\mathbf{D}}_{0}<\lambda^{\mathbf{D}}_{1}\leq\lambda^{\mathbf{D}}_{2}\leq\lambda^{\mathbf{D}}_{3}\leq \lambda^{\mathbf{D}}_{4}\dots,
      \qquad
      \lambda^{\mathbf{D}}_{k} \xrightarrow[k \to \infty]{} \infty.
    \end{align}
Likewise, let us introduce the Neumann Laplacian eigenvalue problem on $\Omega$:
 \begin{align}\label{Neumann}
 \left\{
   \begin{array}{rll}
    -\Delta u^{\mathbf{N}} & \!\!\!\! =\lambda^{\mathbf{N}} u^{\mathbf{N}} & \quad\text{in} \,\,\Omega,\\
    \partial_{n}u^{\mathbf{N}} & \!\!\!\! =0 & \quad\text{on}\,\,\partial\Omega.
     \end{array}
\right.
    \end{align}
Again, the spectrum of this operator is discrete and consists of eigenvalues $(\lambda^{\mathbf{N}}_{i})_{i\in\mathbb{N}}$ with finite multiplicities, tending to infinity:
 \begin{align*}
   0=\lambda^{\mathbf{N}}_{0}<\lambda^{\mathbf{N}}_{1}\leq\lambda^{\mathbf{N}}_{2}\leq\lambda^{\mathbf{N}}_{3}\leq \lambda^{\mathbf{N}}_{4}\dots,
   \qquad
   \lambda^{\mathbf{N}}_{k} \xrightarrow[k \to \infty]{} \infty.
 \end{align*}
        
\begin{prop}\label{ineq}
     Let $\lambda^{\varepsilon}_{1}$ be the second eigenvalue of the mixed  Dirichlet--Neumann problem~\eqref{mixed on disk}. Then, for any~$\varepsilon \in (0,\varepsilon_0)$,
     \begin{align}
         \lambda^{\mathbf{N}}_{1}\leq \lambda^{\varepsilon}_{1}\leq \lambda^{\mathbf{D}}_{1},
     \end{align}
      where $\lambda^{\mathbf{N}}_{1}$ (resp. $\lambda^{\mathbf{D}}_{1}$) is the  second eigenvalue of the Neumann (resp. Dirichlet) problem (see respectively~\eqref{Neumann} and~\eqref{Dirichlet}).
\end{prop}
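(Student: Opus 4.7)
The plan is to apply the Courant--Fischer min-max characterization of eigenvalues and exploit the fact that the Dirichlet, mixed, and Neumann Laplacians all share the same quadratic form $Q(u)=\int_\Omega|\nabla u|^2$ but are associated with progressively larger form domains. Concretely, the equivalent min-max formula (equivalent to~\eqref{min})
\begin{equation*}
\lambda^{\varepsilon}_1 \;=\; \inf_{\substack{V\subset H^{1}_{0,\Gamma^{\varepsilon}_{\mathbf{D}}}(\Omega) \\ \dim V = 2}} \ \sup_{u\in V\setminus\{0\}} \frac{Q(u)}{\|u\|^{2}_{L^2(\Omega)}},
\end{equation*}
together with its analogues for $\lambda^{\mathbf{D}}_1$ (with admissible subspaces $V\subset H^{1}_{0}(\Omega)$) and for $\lambda^{\mathbf{N}}_1$ (with admissible subspaces $V\subset H^{1}(\Omega)$), is all that will be used.

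The key observation is the inclusion chain
\begin{equation*}
H^1_0(\Omega) \ \subset\ H^{1}_{0,\Gamma^{\varepsilon}_{\mathbf{D}}}(\Omega)\ \subset\ H^{1}(\Omega),
\end{equation*}
which follows directly from $\Gamma^{\varepsilon}_{\mathbf{D}}\subsetneq \partial\Omega$: vanishing on the whole boundary certainly implies vanishing on $\Gamma^{\varepsilon}_{\mathbf{D}}$, and of course imposing a partial trace condition still yields a subspace of $H^1(\Omega)$.

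For the upper bound $\lambda^{\varepsilon}_1\leq \lambda^{\mathbf{D}}_1$, I would observe that any two-dimensional subspace $V\subset H^1_0(\Omega)$ is, by the first inclusion, an admissible test subspace in the min-max formula defining $\lambda^{\varepsilon}_1$; taking the infimum over the strictly larger collection of admissible subspaces can only decrease the value, hence $\lambda^{\varepsilon}_1\leq \lambda^{\mathbf{D}}_1$. For the lower bound $\lambda^{\mathbf{N}}_1\leq \lambda^{\varepsilon}_1$, the same monotonicity argument applies via the second inclusion: any two-dimensional subspace of $H^{1}_{0,\Gamma^{\varepsilon}_{\mathbf{D}}}(\Omega)$ is admissible in the Neumann variational problem.

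There is no real obstacle here: since the quadratic form $Q$ is identical in the three min-max characterizations and only the admissible form domains change, the proposition is a textbook instance of the monotonicity of min-max values with respect to the form domain. The only minor point of care is to phrase the min-max in the Courant--Fischer form recalled above (rather than in the shape~\eqref{min} involving $L^2$-orthogonality against an external subspace), as this makes the monotonicity across form domains entirely transparent.
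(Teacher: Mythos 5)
Your proof is correct and follows essentially the same route as the paper: both arguments rest on the inclusion chain $H^1_0(\Omega)\subset H^{1}_{0,\Gamma^{\varepsilon}_{\mathbf{D}}}(\Omega)\subset H^{1}(\Omega)$ and the monotonicity of the min-max value of the common quadratic form $Q$ with respect to the form domain. The only cosmetic difference is that you use the inf-sup (Courant--Fischer) formulation over two-dimensional subspaces, whereas the paper uses the equivalent sup-inf formulation with an $L^2$-orthogonality constraint.
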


\begin{proof}
  Let us introduce
\begin{align}
  H^{1}_{0}(\Omega):= \left\{u \in H^{1}(\Omega), \, \, u_{|_{\partial\Omega}}=0 \right\},
\end{align}
the space of functions in $H^{1}(\Omega)$ whose trace vanishes on $\partial\Omega$.
The min-max principle implies that
\begin{align*}
     \lambda^{\mathbf{N}}_{1}&=\underset{{\psi_{1}}\in L^{2}(\Omega)\backslash\{0\}}{\text{sup}}\quad\underset{u\in \text{Span}(\psi_{1})^{\perp}}{\underset{u\in H^{1}(\Omega)\backslash\{0\}}{\text{inf}}} \left\{ \frac{Q(u)}{\| u\|^{2}_{L^2(\Omega)}} \right\}\\
     &\leq \underset{{\psi_{1}}\in L^{2}(\Omega)\backslash\{0\}}{\text{sup}}\quad\underset{u\in \text{Span}(\psi_{1})^{\perp}}{\underset{u\in H^{1}_{0,\Gamma^{\varepsilon}_{\mathbf{D}}}(\Omega)\backslash\{0\}}{\text{inf}}}\left\{ \frac{Q(u)}{\| u\|^{2}_{L^2(\Omega)}} \right\} = \lambda^{\varepsilon}_{1} \\
     &\leq \underset{{\psi_{1}}\in L^{2}(\Omega)\backslash\{0\}}{\text{sup}}\quad\underset{u\in \text{Span}(\psi_{1})^{\perp}}{\underset{u\in H^{1}_{0}(\Omega)\backslash\{0\}}{\text{inf}}} \left\{ \frac{Q(u)}{\| u\|^{2}_{L^2(\Omega)}} \right\} =\lambda^{\mathbf{D}}_{1},
\end{align*}
where we used the  inclusions 
$H^{1}_{0}(\Omega)\subset H^{1}_{0,\Gamma^{\varepsilon}_{\mathbf{D}}}(\Omega)\subset H^{1}(\Omega)$.
\end{proof}

\subsection{Proof of item {\em (ii)} of Theorem~\ref{One eigenvalue results N holes}}
\label{Proof of Theo 1.1}

We provide in this section the proof of item~\emph{(ii)} of Theorem~\ref{One eigenvalue results N holes}, which relies on estimates on the Dirichlet form evaluated at the quasi-mode~$\varphi^\varepsilon$. The proof is based on various technical estimates, which we state and prove before concluding the section with the actual proof of Theorem~\ref{One eigenvalue results N holes}.

The first two lemmas allow us to localize the zero set of the quasi-mode $\varphi^\varepsilon$ defined in~\eqref{QuasiNhole}. In fact, we write localization results for slightly more general functions
\begin{equation}
  \label{QuasiNhole_alpha}
  \forall x \in \Omega, \qquad {\varphi}_\alpha^{\varepsilon}(x)=-\frac{1}{\sqrt{\pi}}-\frac{1}{\alpha\sqrt{\pi}}\sum_{k=1}^{N}K^{(k)}_{\varepsilon}\left(\text{log}\hspace{0.1cm}|x-x^{(k)}|+\frac{1-|x|^{2}}{4}\right),
\end{equation}
for some scaling parameter~$\alpha \in (0,+\infty)$. Note that~$\varphi^\varepsilon_1$ coincides with~$\varphi^\varepsilon$.

\begin{lem}
\label{Cercle 1}
Fix~$\alpha\in(0,+\infty)$ and let ${\varphi}_\alpha^{\varepsilon}$ be the function defined in~\eqref{QuasiNhole_alpha}. Then, for
\begin{equation}
  \label{eq:choix_C_alpha_moins}
        {C}_{\alpha,-}> \alpha+\dfrac{1}{8}+\frac{\log(2)}{2},
\end{equation}
and for any~$\eps \in (0,\varepsilon_{0})$, if $|x-x^{(k)}|\leq \mathrm{e}^{-C_{\alpha,-}/K^{(k)}_{\varepsilon}}$ for some $k\in\{1,\dots,N\}$, then~${\varphi}_\alpha^{\varepsilon}(x)> 0$.
\end{lem}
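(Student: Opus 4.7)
The plan is to reduce the sign statement $\varphi^\varepsilon_\alpha(x)>0$ to a direct estimation problem, then bound the contributions coming from each window separately. Since $-\alpha\sqrt{\pi}<0$, multiplying by this quantity flips the inequality, so
$\varphi_\alpha^\varepsilon(x)>0$ is equivalent to
\[
\alpha + \sum_{k'=1}^{N} K^{(k')}_{\varepsilon}\left(\log|x-x^{(k')}| + \frac{1-|x|^{2}}{4}\right) < 0.
\]
I would split the sum into the ``singular'' term $k'=k$, where $x$ is close to $x^{(k)}$, and the remaining ``regular'' terms $k'\neq k$.

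For the singular contribution, the hypothesis $|x-x^{(k)}|\le \mathrm{e}^{-C_{\alpha,-}/K^{(k)}_{\varepsilon}}$ directly yields $K^{(k)}_{\varepsilon}\log|x-x^{(k)}|\le -C_{\alpha,-}$, which supplies the main negative contribution. The term $K^{(k)}_\varepsilon(1-|x|^2)/4$ is bounded by $K^{(k)}_\varepsilon/4$ since $x\in\Omega$. For the regular contributions, I would use only that $x\in\Omega$ and $x^{(k')}\in\partial\Omega$, giving $|x-x^{(k')}|\le 2$ and hence $\log|x-x^{(k')}|\le \log 2$, together with $(1-|x|^2)/4\le 1/4$. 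Summing over $k'\neq k$ and recombining, the bound takes the form
\[
\alpha - C_{\alpha,-} + (\log 2 + \tfrac{1}{4})\,\overline{K}_{\varepsilon} - (\log 2)\,K^{(k)}_{\varepsilon},
\]
and dropping the last (negative) term gives the cleaner upper bound
$\alpha - C_{\alpha,-} + (\log 2 + \tfrac{1}{4})\,\overline{K}_\varepsilon$.

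The last step is to invoke the a priori smallness of $\overline{K}_\varepsilon$ coming from~\eqref{eq:barK_hyp}, which in particular implies $\overline{K}_\varepsilon < 1/2$. This yields
\[
\alpha + \sum_{k'=1}^{N} K^{(k')}_{\varepsilon}\left(\log|x-x^{(k')}| + \frac{1-|x|^{2}}{4}\right) < \alpha - C_{\alpha,-} + \frac{\log 2}{2} + \frac{1}{8},
\]
which is strictly negative under assumption~\eqref{eq:choix_C_alpha_moins}, concluding the proof. No real obstacle is expected here: the argument is purely algebraic, and the two constants $\log 2/2$ and $1/8$ in~\eqref{eq:choix_C_alpha_moins} can be traced transparently to the bounds $\log|x-x^{(k')}|\le\log 2$ and $(1-|x|^2)/4\le 1/4$ together with the factor~$1/2$ coming from~\eqref{eq:barK_hyp}. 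The condition $\overline{K}_{\varepsilon}<\frac{1}{2|\log(\rho_0/2)|}$ from~\eqref{eq:barK_hyp} plays no role here; it will be used in the companion Lemma~\ref{cercle 2} to ensure that $x$ cannot be simultaneously close to two distinct exit points.
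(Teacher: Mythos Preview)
Your proof is correct and follows essentially the same approach as the paper: both use the bounds $|x-x^{(k')}|\le 2$, $(1-|x|^2)/4\le 1/4$, and $\overline{K}_\varepsilon<1/2$ from~\eqref{eq:barK_hyp} in the same way, with the only cosmetic difference being that you multiply through by $-\alpha\sqrt{\pi}$ first and bound the resulting expression from above, whereas the paper bounds $\varphi^\varepsilon_\alpha$ from below directly.
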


\begin{proof}
Consider~$x \in \Omega$ such that~$|x-x^{(k)}| \leq \mathrm{e}^{-C_{\alpha,-}/K^{(k)}_{\varepsilon}}$  for some $k\in\{1,\dots,N\}$ (and for some ${C}_{\alpha,-}>0$ to be determined). Using that $|x-x^{(k')}|\leq 2$ for all~$k'\in\{1,\dots,N\}$ with~$k'\ne k$, we obtain, for any~$\eps \in (0,\varepsilon_{0})$,
\begin{align*}
     {\varphi}^{\varepsilon}(x)&\geq -\frac{1}{\sqrt{\pi}}+\frac{{C}_{\alpha,-}}{\alpha\sqrt{\pi}}-\left(\sum_{\substack{k'=1 \\ k'\neq k}}^{N}K^{(k')}_{\varepsilon}\right)\frac{\text{log}(2)}{\alpha\sqrt{\pi}}-\left(\sum_{\substack{k=1}}^{N}K^{(k)}_{\varepsilon}\right)\frac{1}{4\alpha\sqrt{\pi}}\\
     &\geq \frac{1}{{\sqrt{\pi}}} \left(-1 +\frac{{C}_{\alpha,-}}{\alpha}-\frac{\log(2)}{\alpha} \overline{K}_\varepsilon-\frac{\overline{K}_\varepsilon}{4\alpha}\right) \\
     &\geq \frac{1}{{\sqrt{\pi}}} \left(-1 + \frac1\alpha\left[C_{\alpha,-}-\frac{\text{log}(2)}{2}-\frac{1}{8}\right]\right)
\end{align*}
where we used the inequality $\overline{K}_\varepsilon<1/2$ implied by~\eqref{eq:barK_hyp}. The right hand side of the last inequality is positive for the choice~\eqref{eq:choix_C_alpha_moins}.
\end{proof}

\begin{lem}\label{cercle 2}
  Fix~$\alpha\in(1/2,+\infty)$ and let ${\varphi}_\alpha^{\varepsilon}$ be the function defined in~\eqref{QuasiNhole_alpha}. Then, for
\begin{equation}
  \label{eq:choix_C_alpha_plus}
        0 < {C}_{\alpha,+} < \alpha - \frac12, 
\end{equation}
and for any $\eps \in (0,\varepsilon_{0})$, if $|x-x^{(k)}|\geq \mathrm{e}^{-C_{\alpha,+}/K^{(k)}_{\varepsilon}}$ for all $k\in\{1,\dots,N\}$, then ${\varphi}^{\varepsilon}_\alpha(x)< 0$.
\end{lem}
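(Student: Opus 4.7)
The proof should mirror the structure of Lemma~\ref{Cercle 1}, but this time we want an \emph{upper} bound on $\varphi^\varepsilon_\alpha$, so we need a \emph{lower} bound on $\sum_k K^{(k)}_\varepsilon \log|x-x^{(k)}|$. The hypothesis gives $K^{(k)}_\varepsilon \log|x-x^{(k)}| \geq -C_{\alpha,+}$ for each $k$; applying this to all $N$ terms would produce an undesired factor~$N$. The trick is that at most one point $x^{(k_0)}$ can be close to~$x$, thanks to the separation $\rho_0$, so we only need to use the crude hypothesis on a single term and can use a much sharper bound on the others.

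The plan is as follows. Fix $x\in\Omega$ satisfying the hypothesis, and let $k_0 \in \{1,\dots,N\}$ be an index minimizing $k\mapsto |x-x^{(k)}|$. Either $|x-x^{(k_0)}|>\rho_0/2$, in which case $|x-x^{(k)}| > \rho_0/2$ for every $k$; or $|x-x^{(k_0)}| \leq \rho_0/2$, and then for $k\neq k_0$ the triangle inequality together with $|x^{(k_0)}-x^{(k)}| \geq \rho_0$ yields $|x-x^{(k)}| \geq \rho_0/2$. In either case, for every $k\neq k_0$ one has $\log|x-x^{(k)}| \geq \log(\rho_0/2) = -|\log(\rho_0/2)|$, while for $k_0$ the hypothesis gives $K^{(k_0)}_\varepsilon \log|x-x^{(k_0)}| \geq -C_{\alpha,+}$. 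Summing and using~\eqref{eq:barK_hyp},
\[
\sum_{k=1}^N K^{(k)}_\varepsilon \log|x-x^{(k)}| \geq -C_{\alpha,+} - \overline{K}_\varepsilon |\log(\rho_0/2)| \geq -C_{\alpha,+} - \tfrac12.
\]

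Next, since $x\in\Omega$ implies $|x|\leq 1$, the term $(1-|x|^2)/4$ is nonnegative, so
\[
-\frac{1}{\alpha\sqrt{\pi}}\sum_{k=1}^{N}K^{(k)}_{\varepsilon}\cdot\frac{1-|x|^{2}}{4} \leq 0.
\]
Combining these two estimates in the definition~\eqref{QuasiNhole_alpha} of $\varphi^\varepsilon_\alpha$ yields
\[
\varphi^\varepsilon_\alpha(x) \;\leq\; -\frac{1}{\sqrt{\pi}} + \frac{1}{\alpha\sqrt{\pi}}\left(C_{\alpha,+} + \tfrac12\right) \;=\; \frac{1}{\sqrt{\pi}}\cdot\frac{C_{\alpha,+} + \tfrac12 - \alpha}{\alpha},
\]
which is strictly negative exactly under the hypothesis~\eqref{eq:choix_C_alpha_plus}, concluding the proof.

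There is no real obstacle here beyond the careful bookkeeping: the computation is entirely parallel to the proof of Lemma~\ref{Cercle 1}. The only subtlety worth emphasizing is the bound via $\rho_0$-separation (as opposed to the trivial bound $|x-x^{(k)}|\leq 2$ used in the previous lemma), which is what prevents the constant $C_{\alpha,+}$ from acquiring an $N$-dependent penalty; the restriction $\alpha>1/2$ is precisely what makes the interval for $C_{\alpha,+}$ nonempty.
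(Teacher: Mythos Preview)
Your proof is correct and follows essentially the same approach as the paper: both isolate the nearest point~$x^{(k_0)}$ (the paper calls it~$x^{(j(x))}$), use the triangle inequality with the separation~$\rho_0$ to bound the remaining terms by~$\log(\rho_0/2)$, drop the nonnegative~$(1-|x|^2)/4$ contribution, and invoke~\eqref{eq:barK_hyp} to control~$\overline{K}_\varepsilon|\log(\rho_0/2)|$ by~$1/2$. The only cosmetic difference is that you make the case split on whether~$|x-x^{(k_0)}|\le\rho_0/2$ explicit, whereas the paper writes the triangle-inequality argument in one line.
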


\begin{proof}
Consider~$x\in \Omega$ such that $|x-x^{(k)}|\geq \mathrm{e}^{-C_{\alpha,+}/K^{(k)}_{\varepsilon}}$ for all $k\in\{1,\dots,N\}$. Introduce $j(x)\in\{1,\ldots,N\}$ such that
\begin{align*}
  \forall k\in\{1,\dots,N\}, \qquad \left|x-x^{(j(x))}\right|\leq \left|x-x^{(k)}\right|.
\end{align*} 
  Using~\eqref{rho0}, it follows $\rho_0 \le |x^{(k)} - x^{(j(x))}| \le |x^{(k)} - x|+ |x - x^{(j(x))}|\le 2 |x^{(k)} - x|$, so that $|x-x^{(k)}|\ge \rho_0/2$ for all  $k\ne j(x)$. Then, making use of the inequality~$-\log\left(\frac{\rho_0}{2}\right) \ge 0$, we obtain, for $\eps \in (0,\varepsilon_{0})$,
\begin{align*}
   {\varphi}^{\varepsilon}_\alpha(x)&\leq -\frac{1}{\sqrt{\pi}}+\frac{{C}_{\alpha,+}}{\alpha\sqrt{\pi}} - \frac{1}{\alpha\sqrt{\pi}}\sum_{\substack{k'=1 \\ k'\neq k}}^N K_\varepsilon^{(k')} \log\left(\frac{\rho_0}{2}\right) \le \frac{1}{\sqrt{\pi}}\left( -1 + \frac{{C}_{\alpha,+}}{\alpha} - \frac{\overline{K}_\varepsilon}{\alpha} \log\left(\frac{\rho_0}{2}\right) \right).
    \end{align*}
The latter quantity is negative in view of~\eqref{eq:barK_hyp} and~\eqref{eq:choix_C_alpha_plus}.
    \end{proof}
    
In the following, we denote by
\begin{align}\label{r+ and r-}
{r^{(k)}_{\varepsilon,-}}=\mathrm{e}^{-C_{1,-}/K^{(k)}_{\varepsilon}},
\qquad
{r^{(k)}_{\varepsilon,+}}=\mathrm{e}^{-C_{1,+}/K^{(k)}_{\varepsilon}},
\end{align}
the radii which appear respectively in the statements of
Lemmas~\ref{Cercle 1} and \ref{cercle 2} for~$\alpha=1$. In view of
these results, as already stated in the introduction,
$\varphi_\varepsilon$ vanishes in $\Omega$ on $N$ disjoint curves
$(\Gamma^{\varepsilon}_{\mathbf{D}_k})_{k=1\ldots,N}$, and for $k\in
\{1,\dots,N\}$, the curve $\Gamma^{\varepsilon}_{\mathbf{D}_k}$ is
located  in a neighborhood of~$x^{(k)}$ and is contained in an annulus
centered at~$x^{(k)}$, with radii  $r^{(k)}_{\varepsilon,-}$ and
$r^{(k)}_{\varepsilon,+}$ (see also
Lemma~\ref{lem:widetilde_Gamma_D_well_def} below).

The following results on the functions $f_k$ are useful in the proof of item~\emph{(ii)} of Theorem~\ref{One eigenvalue results N holes} below, to get precise estimate on the energy of the quasi-mode $\varphi^\varepsilon$ .

\begin{lem}\label{prop of f_k}
    Let $f_k$ be the function  defined in~\eqref{QuasiNhole}.  The function $f_k$ satisfies
        \begin{align}\label{normal of f_k}
        \partial_n {f}_{k}=0\,\,\,\text{on}\,\,\partial\Omega\setminus \{x^{(k)}\},
    \end{align}
        and \begin{align}\label{laplacian fk}
        \Delta f_{k}=-1 \text{ in } \Omega.
    \end{align}
Moreover, there exists $C>0$ independent of $k$ such that 
    \begin{align}\label{Lp norm f_k}
           \| {f}_{k}\|_{L^{2}(\Omega)}\leq C.
    \end{align}
Finally, there exists $C>0$ independent of $k$ and of~$\varepsilon$ such that, for any $\eps \in (0,\varepsilon_{0})$ and~$r\in(0,1)$, 
    \begin{align}\label{L2 grad norm}
\int_{\Omega \setminus {\mathbf{B}}\left(x^{(k)},\,r\right)}{\vert  \nabla f_k(x) \vert}^{2}\,\mathrm{d}x\leq C\left(1+ \log\left(\frac{2}{r}\right)\right),
    \end{align}
    where for any $x \in \Omega$ and $r>0$, $\Omega \setminus {\mathbf{B}}(x,r)$ denotes the complement of the disk~${\mathbf{B}}(x,r)$ in~$\Omega$.
\end{lem}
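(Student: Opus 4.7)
The plan is to prove each of the four assertions by direct computation, exploiting the explicit formula $f_k(x) = \log|x-x^{(k)}| + (1-|x|^2)/4$ recalled in~\eqref{QuasiNhole} and the simple geometry of the unit disk $\Omega = \mathbf{B}(0,1)$.

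First I would establish~\eqref{laplacian fk} and~\eqref{normal of f_k} in sequence. For the Laplacian identity, observe that $\log|\cdot - x^{(k)}|$ is harmonic on $\Omega$ since its only singularity $x^{(k)}$ lies on $\partial \Omega$, not in the interior; meanwhile $\Delta\bigl((1-|x|^2)/4\bigr) = -d/2 = -1$ in dimension two. For the Neumann identity, I would use that the outward unit normal on $\partial \Omega$ at a point with $|x|=1$ is $\vec n(x) = x$. A short calculation based on the identity $|x-x^{(k)}|^2 = 2(1 - x \cdot x^{(k)})$, valid on the unit circle, shows that $x \cdot \nabla \log|x-x^{(k)}| = 1/2$ away from $x^{(k)}$, which is exactly cancelled by $x \cdot \nabla\bigl((1-|x|^2)/4\bigr) = -|x|^2/2 = -1/2$.

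For the $L^2$ bound~\eqref{Lp norm f_k}, I would apply the triangle inequality and treat each term separately. The contribution of $(1-|x|^2)/4$ is uniformly bounded on $\Omega$ and hence contributes at most a universal constant. For $\int_\Omega (\log|x-x^{(k)}|)^2 \, dx$, switching to polar coordinates centered at $x^{(k)}$ together with the inclusion $\Omega \subset \mathbf{B}(x^{(k)}, 2)$ reduces matters to the finite integral $2\pi \int_0^2 \rho \log^2 \rho \, d\rho$. The resulting bound is independent of $k$ because only the diameter of $\Omega$ enters.

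Finally, for the gradient estimate~\eqref{L2 grad norm}, a direct calculation gives $\nabla f_k(x) = (x-x^{(k)})/|x-x^{(k)}|^2 - x/2$, and the elementary inequality $(a+b)^2 \le 2a^2 + 2b^2$ together with $|x|\le 1$ yields $|\nabla f_k(x)|^2 \le 2|x-x^{(k)}|^{-2} + 1/2$. Integrating over $\Omega \setminus \mathbf{B}(x^{(k)}, r)$ in polar coordinates centered at $x^{(k)}$ (again using $\Omega \subset \mathbf{B}(x^{(k)}, 2)$) bounds the singular part by $2\pi \log(2/r)$, while the bounded part contributes at most $|\Omega|/2$. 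None of these four steps is delicate; the only point requiring some care is to track that all constants depend solely on the ambient geometry of the unit disk, which ensures the desired uniformity in $k$ and $\varepsilon$.
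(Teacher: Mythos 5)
Your proposal is correct and follows essentially the same route as the paper: explicit computation of $\nabla f_k$, the observation that $x\cdot\nabla\log|x-x^{(k)}|=1/2$ on the unit circle (the paper phrases this with angles, you with the identity $|x-x^{(k)}|^2=2(1-x\cdot x^{(k)})$), and polar coordinates centered at $x^{(k)}$ with $\Omega\subset\mathbf{B}(x^{(k)},2)$ for the two integral bounds. The only cosmetic difference is that you justify $\Delta\log|x-x^{(k)}|=0$ by harmonicity of the logarithmic kernel away from its boundary singularity, whereas the paper writes the gradient as a divergence-free field; both are valid.
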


\begin{proof}
Let us first prove~\eqref{normal of f_k}. A simple computation shows that
       \begin{align}
       \label{grad f_k}
    \nabla f_{k}(x) = \frac{x-x^{(k)}}{|x-x^{(k)}|^{2}}-\frac{x}{2}.
       \end{align}
       Using that $\vec{n}(x)=\frac{x}{|x|} \,\,\,\text{on}\,\,\partial\Omega$, we obtain that
         \begin{align*}
 \nabla f_{k}(x) \cdot\vec{n}(x) = \frac{x-x^{(k)}}{|x-x^{(k)}|^{2}}\cdot \frac{x}{|x|}-\frac{1}{2}.
       \end{align*}
       Using  polar coordinates, we can write  $x^{(k)}=(\text{cos}(\theta_{k}),\text{sin}(\theta_{k}))$  and $x=(\text{cos}(\theta_x),\text{sin}(\theta_x))$ for $x\in \partial\Omega$ where $\theta_x,\theta_{k}\in[0,2\pi)$ and $\theta_x\ne\theta_{k}$ for~$x \in \partial\Omega \setminus \{x^{(k)}\}$.
   Then,
     \begin{align*}
 \forall x \in \partial\Omega \setminus \{x^{(k)}\}, \qquad \nabla f_{k}(x)\cdot\vec{n}(x)=\frac{1-\text{cos}(\theta_x-\theta_{k})}{{2(1-\text{cos}(\theta_x-\theta_{k}))}}-\frac{1}{2}=0. 
       \end{align*}
       
      Moreover, since 
      $\frac{x-x^{(k)}}{|x-x^{(k)}|^{2}}=\begin{pmatrix}\partial_{x_2} \\ -\partial_{x_1}\end{pmatrix} \arctan\left(\frac{x_2-x^{(k)}_2}{x_1-x^{(k)}_1}\right)$, one has
       \begin{align*}
        \div\left(\frac{x-x^{(k)}}{|x-x^{(k)}|^{2}}\right)=0,
      \end{align*} so that  that
      \begin{align*}
\Delta f_{k}=\div( \nabla f_{k})=-1.
      \end{align*} 

The estimate~\eqref{Lp norm f_k} is immediate from the definition of $f_k$.

Let us finally prove~\eqref{L2 grad norm}. Denote  by ${\mathbf{B}}^{\mathrm{c}}(x,r)=\Omega \setminus {\mathbf{B}}(x,r)$ the complement of the disk~${\mathbf{B}}(x,r)$ in~$\Omega$. 
    From~\eqref{grad f_k}, a Cauchy--Schwarz inequality gives
\begin{align*}
\vert \nabla f_k\vert^{2} &= \frac{1}{\vert x-x^{(k)}\vert^{2}}- \frac{x\cdot(x-x^{(k)})}{\vert x-x^{(k)}\vert^{2}}+\frac{|x|^{2}}{4} \le 2 \left( \frac{1}{\vert x-x^{(k)}\vert^{2}} + \frac{|x|^{2}}{4} \right).
\end{align*}
Then, using that $\vert x\vert \leq 1$ on ${\mathbf{B}}^{\mathrm{c}}\left(x^{(k)},\,r\right)\subset\Omega$,   
\begin{align*}
\int_{{\mathbf{B}}^{\mathrm{c}}\left(x^{(k)},r\right)}{\vert  \nabla f_k \vert}^{2}\,\mathrm{d}x&\leq 2 \left( \int_{{\mathbf{B}}^{\mathrm{c}}\left(x^{(k)},\,r\right)} \frac{1}{\vert x-x^{(k)}\vert^{2}} \,\mathrm{d}x+\frac{\pi}{4}\right).
 \end{align*}
To estimate the first term in the right-hand side, we use polar coordinates centered at $x^{(k)}$ and the fact that  ${\mathbf{B}}^{\mathrm{c}}\left(x^{(k)},r\right)\subset \Omega \subset\mathbf{B}\left(x^{(k)},2\right)$ to get
$$
\int_{{\mathbf{B}}^{\mathrm{c}}\left(x^{(k)},\,r\right)} \frac{1}{\vert x-x^{(k)}\vert^{2}} \,\mathrm{d}x \leq \int_{-\frac{\pi}{2}}^{\frac{\pi}{2}}\int_{r}^{2}\frac{1}{r'}\,\mathrm{d}r'\,\mathrm{d}\theta= \pi \text{log}\left(\frac{2}{r}\right).
$$
This concludes the proof of~\eqref{L2 grad norm} with $C=2\pi$.
    \end{proof}

 We are now in position to conclude the proof of Theorem~\ref{One eigenvalue results N holes}.
\begin{proof}[Proof of item (ii) of Theorem \ref{One eigenvalue results N holes}]
Let us first prove  that 
\begin{align}\label{greater 1}
  \text{dim}\hspace{0.07cm}\text{Ran}\hspace{0.07cm}\pi_{[0,c\hspace{0.01cm}\overline{K}_{\varepsilon}]}(\mathcal{L}^{\varepsilon})\geq 1.
\end{align}
To this end, we use~\eqref{min0} and  show that $\mathcal{L}^{\varepsilon}$ admits at least one eigenvalue of order $\mathrm{O}\left(\overline{K}_{\varepsilon}\right)$ by constructing an appropriate test function. Consider
\[
\frac12 < \alpha < \frac78 - \frac{\log(2)}{2},
\]
so that~\eqref{eq:choix_C_alpha_moins} holds with~$C_{\alpha,-} = 1$ and~\eqref{eq:choix_C_alpha_plus} for some positive constant~$C_{\alpha,+}$. Recall the definition~\eqref{QuasiNhole_alpha} of~$\varphi^\varepsilon_\alpha$, and define~$\Omega_{\alpha,\varepsilon}:=\{x\in\Omega, \, {\varphi}^{\varepsilon}_\alpha(x)<0\}$ and
\begin{align}\label{quasimode disk}
   \forall x\in\Omega, \qquad {\phi}^{\varepsilon}_\alpha(x)= {\varphi}^{\varepsilon}_\alpha(x){\mathds{1}}_{\Omega_{\alpha,\varepsilon}}(x).
\end{align}
By construction,
\[
\Gamma^{\varepsilon}_{\mathbf{D}}  = \partial \Omega \cap \left( \bigcup_{k=1}^N {\mathbf{B}}\left(x^{(k)},\mathrm{e}^{-1/K_\varepsilon^{(k)}}\right) \right) \subset \Omega \setminus \Omega_{\alpha,\varepsilon}.
\]
so that ${\phi}_\alpha^{\varepsilon}$ satisfies Dirichlet boundary conditions on $\Gamma^{\varepsilon}_{\mathbf{D}}$, and therefore ${\phi}_\alpha^{\varepsilon}\in H^{1}_{0,\Gamma^{\varepsilon}_{\mathbf{D}}}(\Omega)$. Using~\eqref{min}, it follows that 
\begin{align}
    \lambda^{\varepsilon}_{0}\leq\frac{Q({\phi}_\alpha^{\varepsilon})}{\| {\phi}_\alpha^{\varepsilon}\|^{2}_{L^2(\Omega)}}.
\end{align}
An upper bound on $\lambda^{\varepsilon}_{0}$ can thus be obtained from an upper bound on~$Q({\phi}^{\varepsilon}_\alpha)$ and a lower bound on~$\| {\phi}^{\varepsilon}_\alpha \|^{2}_{L^2(\Omega)}$. For the numerator, we use a discrete Cauchy--Schwarz inequality to write
\begin{align}\label{upper}
  Q({\phi}^{\varepsilon}_\alpha)&\leq  \frac{N}{\pi \alpha^2}\sum_{k=1}^{N}\left(K^{(k)}_{\varepsilon}\right)^2\| \nabla f_{k}\|_{L^{2}(\Omega_{\alpha,\varepsilon})}^{2} \\
  & \leq \frac{N}{\pi \alpha^2}\sum_{k=1}^{N}\left(K^{(k)}_{\varepsilon}\right)^2\| \nabla f_{k}\|_{L^{2}(\Omega \setminus \mathbf{B}(x^{(k)},\exp(-1/K_\varepsilon^{(k)})))}^{2} \leq C\overline{K}_{\varepsilon},
\end{align}
where the second inequality follows from Lemma~\ref{Cercle 1} and the last one from~\eqref{L2 grad norm}. For the denominator, we use Lemma~\ref{cercle 2} and~\eqref{Lp norm f_k} to write 
\begin{align}
   \Vert {\phi}^{\varepsilon}_\alpha\Vert_{L^{2}(\Omega)}= \Vert {\varphi}^{\varepsilon}_\alpha \Vert_{L^{2}(\Omega_{\alpha,\varepsilon})}&\geq \frac{|\Omega_{\alpha,\varepsilon}|^{1/2}}{\sqrt{\pi}} -\frac{1}{\alpha\sqrt{\pi}}\sum_{k=1}^{N}K^{(k)}_{\varepsilon}\|f_{k}\|_{L^{2}(\Omega_{\alpha,\varepsilon})}\nonumber\\&\geq 1- C\sum_{k=1}^{N}\left(\mathrm{e}^{-C_{\alpha,+}/K_\varepsilon^{(k)}}\right)^2 -C\overline{K}_{\varepsilon},\label{lower}
\end{align}
since
\begin{equation}
  \label{eq:lower_bound_mesure_Omega_alpha}
  |\Omega_{\alpha,\varepsilon}| \geq \pi \left(1- \frac{1}{2} \sum_{k=1}^N \left(\mathrm{e}^{-C_{\alpha,+}/K_\varepsilon^{(k)}}\right)^2 \right).
\end{equation}
For further use, let us notice that, thanks to~\eqref{Lp norm f_k}, it can easily be shown that~$\Vert {\phi}^{\varepsilon}_\alpha\Vert_{L^{2}(\Omega)} \leq 1+C\overline{K}_{\varepsilon}$, so that one actually has, for any~$\alpha > 1/2$,
\begin{equation}
  \label{eq:normphi}
  \Vert {\varphi}^{\varepsilon}_\alpha\Vert_{L^{2}(\Omega_{\alpha,\varepsilon})} = 1 + \mathrm{O}(\overline{K}_{\varepsilon}).
\end{equation}   
Gathering~\eqref{upper} and~\eqref{lower}, one obtains that $\lambda_0^\varepsilon \le c \overline{K}_\varepsilon$, for a constant $c$ independent of $\varepsilon$, which yields~\eqref{greater 1}.

In order to conclude the proof, it only remains to notice that since
the second eigenvalue $\lambda_1^\varepsilon$ of the operator $\mathcal L^\varepsilon$ is
bounded from below by $\lambda_1^{\mathbf{N}}$ (a positive constant
independent of $\varepsilon$), then necessarily, for $\varepsilon$
sufficiently small, $\text{dim}\hspace{0.07cm}\text{Ran}\hspace{0.02cm}\pi_{[0,c\hspace{0.01cm}\overline{K}_\varepsilon]}(\mathcal{L}^{\varepsilon})\leq 1$.

This concludes the proof of item~\emph{(ii)} of Theorem~\ref{One eigenvalue results N holes}.
\end{proof}

\subsection{Numerical illustration of Theorem~\ref{One eigenvalue results N holes}}\label{Numerical illustration of 1.1}
We numerically study here the eigenvalue problem~\eqref{mixed on disk} on $\Omega$, in order to illustrate the theoretical results obtained in this section. We choose $K_\varepsilon = -1/\ln \varepsilon$, so that the holes have a radius~$\varepsilon$. The numerical simulations were performed using FreeFem++~\cite{MR3043640}. We used piecewise linear continuous $P_1$ finite elements. The mesh was produced using the automatic mesh generator of FreeFem++, with about $80$ cells to discretize the exit regions~$\Gamma_{\mathbf{D}_k}$ and $160$ cells to mesh the remaining part of the boundary.

Theorem \ref{One eigenvalue results N holes} shows that the smallest eigenvalue $\lambda^{\varepsilon}_0$ of the problem~\eqref{mixed on disk} is non-degenerate, and that it tends to $0$ as $\varepsilon\rightarrow 0$.  Proposition~\ref{ineq} states that the second eigenvalue $\lambda_1^\varepsilon$ is bounded from below (and above) for all $\varepsilon>0$. Moreover, as discussed after~\eqref{eq:normalization_u_0_eps}, the first eigenfunction~$u^{\varepsilon}_0$ does not vanish in $\Omega$ and can therefore be chosen to be negative and with~$L^2$ norm equal to~1. As can be inferred from the proof of item~\emph{(ii)} of Theorem~\ref{One eigenvalue results N holes}, one then expects $u^{\varepsilon}_0$ to be close to the quasimode $\varphi^\varepsilon$, and thus close to the constant function $-1/\sqrt{\pi}$, as $\varepsilon\rightarrow 0$. 

 \begin{figure}
\hspace{-2.5cm}
\includegraphics[scale=0.25]{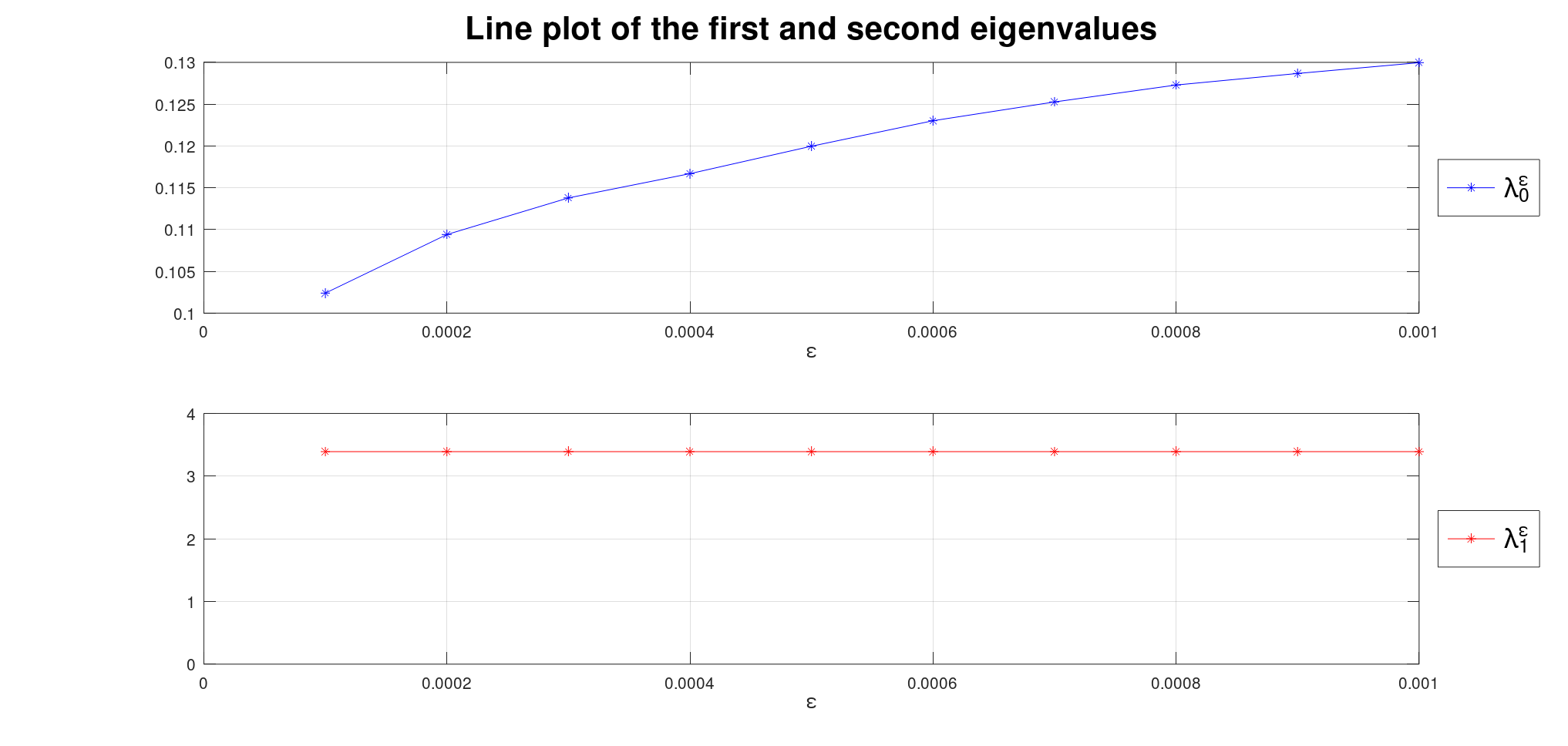}
\caption{\label{fig:variation_eigenvalue} First and second eigenvalues of~$\mathcal{L}^\varepsilon$ as a function of $\varepsilon$ for a single exit point~$x^{(1)}=(1,0)$ and $\mathrm{e}^{-1/K_\varepsilon^{(1)}} = \varepsilon = 0.1$.}
\end{figure}

\begin{figure}
\centering
\includegraphics[scale=0.32]{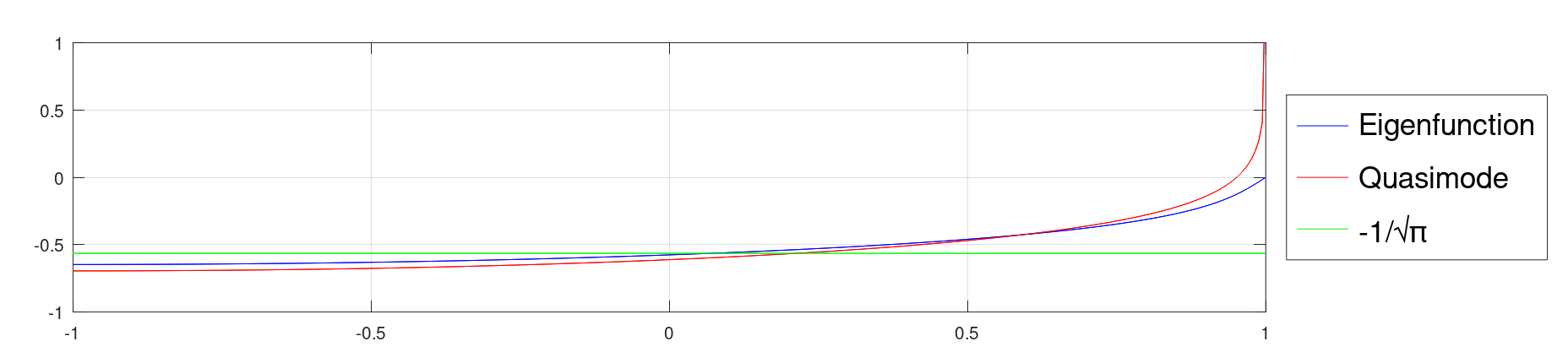}
\caption{\label{fig:variation_eigenfunction} Eigenfunction $u^{\varepsilon}_0$ and quasimode ${\varphi}^{\varepsilon}$ along the horizontal cut $\Omega\cap\{y=0\}$ for a single exit point~$x^{(1)}=(1,0)$ and $\mathrm{e}^{-1/K_\varepsilon^{(1)}} = \varepsilon = 0.1$.}
\end{figure}

\begin{figure}
\hspace{-1.8cm}
 \begin{minipage}{0.41\textwidth}
\centering
\includegraphics[scale=0.5]{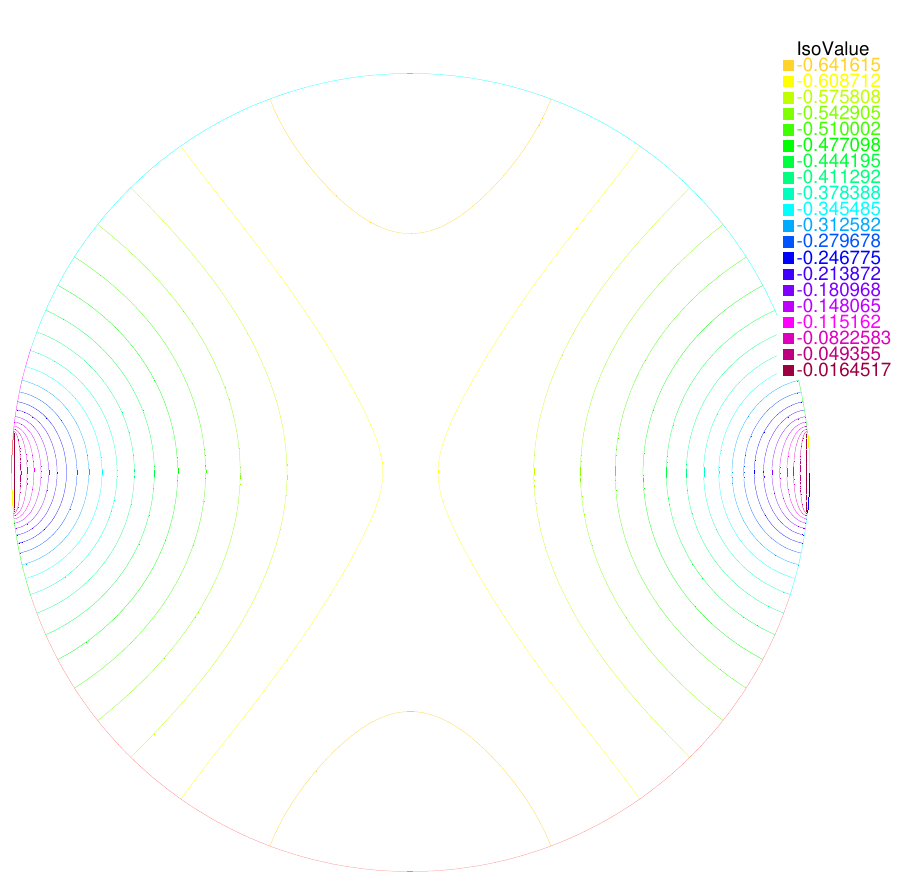}
\end{minipage}\hfill
\begin{minipage}{0.59\textwidth}
\centering
\includegraphics[scale=0.5]{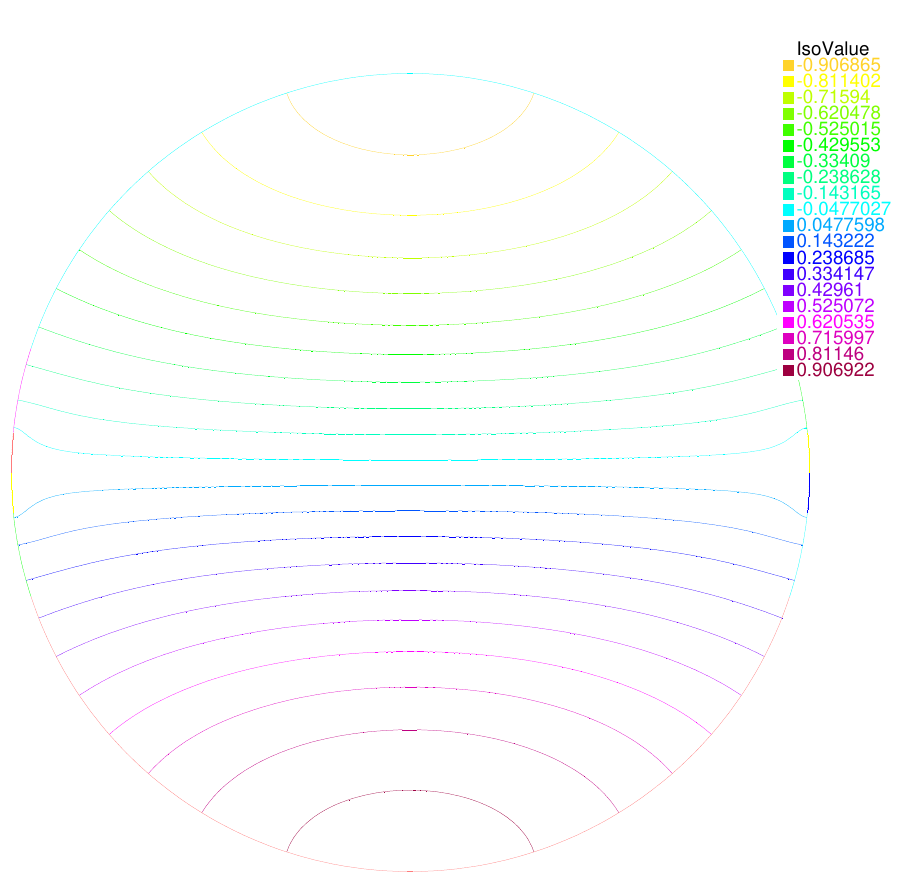}
\end{minipage}
\caption{First two eigenfunctions for holes of radii~$0.1$, with two exit windows. Left: first eigenfunction, associated with $\lambda^{\varepsilon}_0\approx 0.76$. Right: second eigenfunction, associated with  $\lambda^{\varepsilon}_1\approx 3.41$.}\label{image 0 forms}
\end{figure}

We first consider the case of a single absorbing window centered at $x^{(1)}=(1,0)$. As expected, we observe on Figure~\ref{fig:variation_eigenvalue} that the smallest eigenvalue $\lambda^{\varepsilon}_0$ of the problem~\eqref{mixed on disk}  tends to $0$ as $\varepsilon\rightarrow 0$, while the second eigenvalue $\lambda_{1}^{\varepsilon}$ is almost constant. In order to compare the first eigenfunction with the quasimode $\varphi^\varepsilon$, we represent in Figure~\ref{fig:variation_eigenfunction} the first eigenfunction of the operator $\mathcal{L}^{\varepsilon}$ for~$\varepsilon = 0.1$ along the line $\Omega\cap\{y=0\}$, together with the associated quasi-mode defined in~\eqref{QuasiNhole}. 
We observe that the two functions are indeed very close, except around the exit region.

To confirm this observation, we also consider the case of two small disjoint absorbing windows~$\Gamma^{\varepsilon}_{\mathbf{D}_{1}}$ and~$\Gamma^{\varepsilon}_{\mathbf{D}_{2}}$ centered at $x^{(1)}=(1,0)$ and $x^{(2)}=(-1,0)$. We represent in Figure~\ref{image 0 forms} the numerical approximations of the first two eigenfunctions for~$\varepsilon=0.1$. We again observe that the first eigenfunction is close to the constant function~$-1/\sqrt{\pi} \approx -0.564$.


\section{Mean first exit time}
\label{Section law of first exit time}

In this section, we derive precise estimate on the average of the first exit time  $\widetilde{\tau}_\varepsilon$ (recall~\eqref{tildetau}) from the domain~$\widetilde{\Omega}_{\varepsilon}$, where $\widetilde{\Omega}_{\varepsilon}$ is given by~\eqref{The regularity domain}. We start by making precise in Section~\ref{sec:ppties_modified_operator} the operator~$\widetilde{\mathcal L}^{\varepsilon}$, then provide technical results in Section~\ref{sec:lemmas_for_exit_time}, which are used in Section~\ref{sec:proof_thm_exit_times} to prove Theorem~\ref{asymlambda N hole}.

\subsection{Properties of the operator $\widetilde{\mathcal L}^\varepsilon$}
\label{sec:ppties_modified_operator}

From now on, we consider the mixed Dirichlet--Neumann Laplacian
operator $\widetilde{\mathcal L}^\varepsilon$ on the domain
$\widetilde{\Omega}_\varepsilon$ defined by~\eqref{The regularity
  domain}. Let us recall (see~\eqref{domaintilde}) that its domain is given by:
\begin{align}\label{tilde domain}
     \mathcal{D}(\widetilde{\mathcal L}^{\varepsilon})=\left\{u \in  H^{1}(\widetilde\Omega_\varepsilon), \, \, \Delta u \in L^{2}(\widetilde\Omega_\varepsilon), \, \,  \partial_{n}u_{|_{\widetilde\Gamma^{\varepsilon}_{\mathbf{N}}}}=0,\, \,u_{|_{\widetilde\Gamma^{\varepsilon}_{\mathbf{D}}}}=0  \right\},
\end{align} 
where, as explained in the introduction, $\widetilde\Gamma^{\varepsilon}_{\mathbf{D}}=\cup_{k=1}^N \widetilde\Gamma^{\varepsilon}_{\mathbf{D}_{k}}$ with $\widetilde\Gamma^{\varepsilon}_{\mathbf{D}_{k}}$ the connected component of $\Omega \cap (\varphi^\varepsilon)^{-1}\{0\}$ closest to~$x^{(k)}$, and $\widetilde\Gamma^{\varepsilon}_{\mathbf{N}}$ the remaining part of the boundary $\partial \widetilde{\Omega}_\varepsilon$; see~\eqref{tilde partial}. The fact that the definition of~$\widetilde\Gamma^{\varepsilon}_{\mathbf{D}_{k}}$ makes sense is ensured by Lemma~\ref{lem:widetilde_Gamma_D_well_def} below.

Let us emphasize that the spectral results presented in
Section~\ref{Spectral analysis} on ${\mathcal L}^\varepsilon$
(Proposition \ref{def of domain}, Proposition~\ref{ineq}, and Theorem \ref{One eigenvalue results N holes}) also hold true for the modified operator $\widetilde{\mathcal L}^\varepsilon$:
\begin{itemize}
\item[(R1)] $\widetilde{\mathcal L}^{\varepsilon}$ is nonnegative self-adjoint and with compact resolvent;
 \item[(R2)] The second eigenvalue $\widetilde\lambda^{\varepsilon}_{1}$ of the operator~$\widetilde{\mathcal L}^\varepsilon$ satisfies 
   \begin{align}
     \label{eq:l_u_bounds_vp}
     \lambda^{\mathbf{N}}_{1}(\widetilde{\Omega}_\varepsilon)\leq \widetilde\lambda^{\varepsilon}_{1}\leq \lambda^{\mathbf{D}}_{1}(\widetilde{\Omega}_\varepsilon),
   \end{align}
   where $\lambda^{\mathbf{N}}_{1}(\widetilde{\Omega}_\varepsilon)$ (resp. $\lambda^{\mathbf{D}}_{1}(\widetilde{\Omega}_\varepsilon)$) is the  second eigenvalue of the Neumann (resp. Dirichlet) problem of the Laplacian on the domain $\widetilde{\Omega}_\varepsilon$.
 \item[(R3)]  There exist $c > 0$ and $\varepsilon_{0}>0$ such that for any $\varepsilon\in (0,\varepsilon_{0})$,  
$\operatorname{dim} \operatorname{Ran}\hspace{0.07cm}\pi_{[0,c\hspace{0.01cm}\overline{K}_{\varepsilon}]}(\widetilde{\mathcal L}^{\varepsilon})=1$;
\end{itemize}
(R1) and (R2) are directly obtained following the arguments of Section~\ref{properties of L}.
To prove~(R3), it suffices to adapt the proof written in Section~\ref{Proof
  of Theo 1.1} by replacing the domain~$\Omega$ with the modified
domain~$\widetilde{\Omega}_\varepsilon$, and the
quasi-mode~$\varphi_\alpha^\varepsilon$ in~\eqref{QuasiNhole_alpha}
by~$\varphi_1^\varepsilon = \varphi^\varepsilon$, as this function
satisfies by construction the required boundary conditions. Using this
quasimode, one immediately gets that 
$
\text{dim}\hspace{0.07cm}\text{Ran}\hspace{0.07cm}\pi_{[0,c\hspace{0.01cm}\overline{K}_{\varepsilon}]}(\widetilde{\mathcal{L}}^{\varepsilon})\geq
1$. To prove that $
\text{dim}\hspace{0.07cm}\text{Ran}\hspace{0.07cm}\pi_{[0,c\hspace{0.01cm}\overline{K}_{\varepsilon}]}(\widetilde{\mathcal{L}}^{\varepsilon})\leq
1$, in view of~\eqref{eq:l_u_bounds_vp}, it is sufficient to prove
that $\lambda^{\mathbf{N}}_{1}(\widetilde{\Omega}_\varepsilon)$ is
bounded from below by a positive constant, uniformly in~$\varepsilon$: this is stated in the next
lemma. Notice that this is
equivalent to bounding from above by a constant uniform
in~$\varepsilon$ the Poincaré--Wirtinger constant of
the domain~$\widetilde{\Omega}_\varepsilon$. This lemma is thus a direct consequence
of~\cite[Theorem~1.2]{Ruiz2012} (see also
\cite{Boulkhemair-chakib2007}), and of the fact that the family of domains
$(\widetilde\Omega_\varepsilon)_{\varepsilon >0}$ admits a uniform in
$\varepsilon$ interior cone condition in view
of~\eqref{eq:angle_intersection}. 

\begin{lem}\label{lower bound}
  Let $\lambda^{\mathbf{N}}_{1}(\widetilde{\Omega}_\varepsilon)$ be the second eigenvalue of the Laplacian on the domain $\widetilde{\Omega}_\varepsilon$, with Neumann boundary conditions. Then, 
  there exists $\underline \lambda^{\mathbf{N}}_{1}>0$ such that, for any $\varepsilon \in (0,\varepsilon_0)$,
  \begin{align}
    \underline \lambda^{\mathbf{N}}_{1}\leq \lambda^{\mathbf{N}}_{1}(\widetilde{\Omega}_\varepsilon).
  \end{align}
\end{lem}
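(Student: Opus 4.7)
The plan is to reformulate the claim as a uniform Poincaré--Wirtinger inequality on the family $(\widetilde{\Omega}_\varepsilon)_{\varepsilon\in(0,\varepsilon_0)}$, and then invoke a geometric stability result from the literature that provides such an inequality under a uniform interior cone condition. Recall the min--max characterization
\[
\lambda^{\mathbf N}_1(\widetilde\Omega_\varepsilon)=\inf\left\{\frac{\int_{\widetilde\Omega_\varepsilon}|\nabla u|^2}{\int_{\widetilde\Omega_\varepsilon}u^2}\, : \, u\in H^1(\widetilde\Omega_\varepsilon)\setminus\{0\},\ \int_{\widetilde\Omega_\varepsilon}u=0\right\},
\]
so that $\lambda^{\mathbf N}_1(\widetilde\Omega_\varepsilon)$ is exactly the reciprocal of the squared Poincaré--Wirtinger constant of $\widetilde\Omega_\varepsilon$. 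Bounding $\lambda^{\mathbf N}_1(\widetilde\Omega_\varepsilon)$ away from $0$ uniformly in $\varepsilon$ thus amounts to bounding the Poincaré--Wirtinger constants of the family $(\widetilde\Omega_\varepsilon)_{\varepsilon\in(0,\varepsilon_0)}$ uniformly from above.

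Next, I would verify that this family admits a uniform interior cone condition. Away from the exit regions, $\partial\widetilde\Omega_\varepsilon$ coincides with the smooth boundary $\partial\Omega$, so only the $N$ absorbing curves $\widetilde\Gamma^\varepsilon_{\mathbf D_k}$ and their intersections with $\widetilde\Gamma^\varepsilon_{\mathbf N}$ need to be examined. By~\eqref{eq:angle_intersection}, at each intersection point the Dirichlet curve $\widetilde\Gamma^\varepsilon_{\mathbf D_k}$ meets $\partial\Omega$ at an angle exactly equal to $\pi/2$, so no cusps form; moreover, by Lemma~\ref{levelset}, $\widetilde\Gamma^\varepsilon_{\mathbf D_k}$ is contained in an annulus around $x^{(k)}$ of radii $r^{(k)}_{\varepsilon,-}$ and $r^{(k)}_{\varepsilon,+}$, and the estimates of the gradient of $\varphi^\varepsilon$ (through the explicit formula~\eqref{QuasiNhole} and Lemma~\ref{prop of f_k}) yield a uniform bound on the curvature/Lipschitz constant of this curve once rescaled at the relevant scale. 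Combining these facts, one can exhibit an opening angle $\theta_0>0$ and a height $h_0>0$, both independent of $\varepsilon$, such that every boundary point of $\widetilde\Omega_\varepsilon$ is the vertex of an interior cone of opening $\theta_0$ and height $h_0$.

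Once this geometric property is established, one concludes by applying~\cite[Theorem 1.2]{Ruiz2012} (or alternatively~\cite{Boulkhemair-chakib2007}), which guarantees that the Poincaré--Wirtinger constant of a bounded Lipschitz domain is controlled by geometric quantities (diameter, volume, cone parameters) that are all uniformly bounded for our family. This yields the existence of $\underline\lambda^{\mathbf N}_1>0$, independent of $\varepsilon\in(0,\varepsilon_0)$, such that $\underline\lambda^{\mathbf N}_1\le\lambda^{\mathbf N}_1(\widetilde\Omega_\varepsilon)$.

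The main obstacle I anticipate is the verification of the uniform interior cone condition: while \eqref{eq:angle_intersection} ensures a right-angle meeting at each junction, one still needs quantitative control on how flat the curve $\widetilde\Gamma^\varepsilon_{\mathbf D_k}$ remains near this junction at scales smaller than its diameter. This requires examining second-order information on $\varphi^\varepsilon$ near the intersection points, which can be read off from the explicit expression of $f_k$ combined with the rescaling $|x-x^{(k)}|\sim \mathrm e^{-C/K_\varepsilon^{(k)}}$; the key observation is that, up to this rescaling, the level set $\{\varphi^\varepsilon=0\}$ is asymptotically a circular arc of the rescaled geometry, so the cone parameters are $\varepsilon$-independent.
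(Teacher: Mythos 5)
Your proposal is correct and follows essentially the same route as the paper: the paper likewise reduces the claim to a uniform bound on the Poincar\'e--Wirtinger constant of $\widetilde{\Omega}_\varepsilon$ and invokes \cite[Theorem~1.2]{Ruiz2012} (see also \cite{Boulkhemair-chakib2007}) together with the uniform interior cone condition guaranteed by the right-angle intersection property~\eqref{eq:angle_intersection}. Your discussion of how to quantitatively verify the cone condition near the junctions is in fact more detailed than the paper's, which simply asserts it as a consequence of~\eqref{eq:angle_intersection}.
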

A consequence of~\eqref{eq:l_u_bounds_vp} and Lemma~\ref{lower bound}
is that $\widetilde{\lambda}^\varepsilon_1$ admits $\underline
\lambda^{\mathbf{N}}_{1}$ as a lower bound for all~$\varepsilon \in (0,\varepsilon_0)$, and this concludes the proof
of (R3).

We conclude this section by a result ensuring that the Dirichlet
region $\widetilde\Gamma^{\varepsilon}_{\mathbf{D}} $ is indeed the
union of $K$ connected disjoint sets
$(\widetilde\Gamma^{\varepsilon}_{\mathbf{D}_{k}})_{k=1,\ldots,K}$,
for $\varepsilon$ sufficiently small. 

\begin{lem}
\label{lem:widetilde_Gamma_D_well_def}
Upon further reducing~$\varepsilon_0>0$ so that, for any~$\varepsilon \in (0,\varepsilon_0)$ and~$k \in \{1, \ldots,K\}$, 
\begin{equation}
  \label{eq:further_reduced_varepsilon_0}
  r^{(k)}_{\varepsilon,+} \le \frac{\rho_0}{2}, \qquad K_\varepsilon^{(k)} - r^{(k)}_{\varepsilon,+} \left(\frac{2}{\rho_0} + \frac12\right) \ge 0, 
\end{equation}
the set $\Omega \cap \mathbf{B}\left(x^{(k)},r^{(k)}_{\varepsilon,+}\right) \cap \left(\varphi^\varepsilon\right)^{-1}[0,+\infty)$ is star shaped with respect to~$x^{(k)}$ for any~$k \in \{1, \ldots,K\}$, and therefore connected. 
\end{lem}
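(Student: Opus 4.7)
The plan is to deduce star-shapedness from a one-variable monotonicity argument along the chords emanating from~$x^{(k)}$. Fix~$x$ in the set $\Omega \cap \mathbf{B}\left(x^{(k)},r^{(k)}_{\varepsilon,+}\right) \cap \left(\varphi^\varepsilon\right)^{-1}[0,+\infty)$ and parameterize the segment from~$x^{(k)}$ to~$x$ by $x(t) = x^{(k)} + t(x-x^{(k)})$ for $t \in (0,1]$. Since $\Omega$ is the (strictly convex) open unit disk, the chord from the boundary point~$x^{(k)}$ to the interior point~$x$ stays inside~$\Omega$ for all $t\in(0,1]$, with in particular $|x(t)|\le 1$; and $|x(t)-x^{(k)}| = t|x-x^{(k)}| \le r^{(k)}_{\varepsilon,+}$, so $x(t)$ stays in $\mathbf{B}\left(x^{(k)},r^{(k)}_{\varepsilon,+}\right)$. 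Moreover, the first inequality in~\eqref{eq:further_reduced_varepsilon_0} combined with the triangle inequality gives $|x(t) - x^{(k')}| \ge \rho_0 - r^{(k)}_{\varepsilon,+} \ge \rho_0/2$ for all $k' \ne k$. It will then suffice to show that the scalar function $t \mapsto \varphi^\varepsilon(x(t))$ is nonincreasing on $(0,1]$: since its value at $t=1$ is nonnegative by assumption, this yields $\varphi^\varepsilon(x(t)) \ge 0$ for all $t\in(0,1]$, so the half-open segment $(x^{(k)},x]$ lies in the set. Connectedness follows at once by joining any two points via their respective radial segments, which meet in the small neighborhood of~$x^{(k)}$ contained in the set by Lemma~\ref{Cercle 1}.

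The key computation I will perform is to differentiate $t\mapsto\varphi^\varepsilon(x(t))$ using the explicit formula~\eqref{grad f_k}:
\[
\frac{d}{dt}\varphi^\varepsilon(x(t)) = -\frac{1}{\sqrt{\pi}}\sum_{k'=1}^N K^{(k')}_\varepsilon \nabla f_{k'}(x(t))\cdot(x-x^{(k)}).
\]
The self-term $k'=k$ simplifies to $K^{(k)}_\varepsilon\left[1/t - x(t)\cdot(x-x^{(k)})/2\right]$, which is at least $K^{(k)}_\varepsilon(1 - r^{(k)}_{\varepsilon,+}/2)$ thanks to the bounds $1/t \ge 1$, $|x(t)| \le 1$ and $|x-x^{(k)}| \le r^{(k)}_{\varepsilon,+}$. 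Each cross term $k' \ne k$ is bounded in absolute value by $K^{(k')}_\varepsilon(2/\rho_0 + 1/2)r^{(k)}_{\varepsilon,+}$ thanks to $|x(t) - x^{(k')}| \ge \rho_0/2$ and $|x(t)|\le 1$. Summing the two types of contributions and using $\overline{K}_\varepsilon \le 1$ (from~\eqref{eq:barK_hyp}), the second inequality in~\eqref{eq:further_reduced_varepsilon_0} is precisely what is needed to conclude that $\frac{d}{dt}\varphi^\varepsilon(x(t)) \le 0$.

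The only real subtlety is the bookkeeping of the constants in~\eqref{eq:further_reduced_varepsilon_0}: this hypothesis has been tailored precisely so that the singular self-contribution $-K^{(k)}_\varepsilon/t$ coming from the logarithm in $f_k$ dominates, uniformly in $t \in (0,1]$, both the perturbation induced by the quadratic term $-x(t)/2$ in each $\nabla f_{k'}$ and the mild Lipschitz cross-contributions from the other holes (controlled through $\rho_0$). Once the monotonicity is established, passing from the star-shapedness of the segments $(x^{(k)},x]$ to (path-)connectedness of the full set is routine.
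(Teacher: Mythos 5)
Your argument is the same as the paper's: parametrize the segment joining $x$ to $x^{(k)}$, differentiate $\varphi^\varepsilon$ along it using~\eqref{grad f_k}, bound the singular self-term from below and the cross terms via $|x(t)-x^{(k')}|\ge\rho_0/2$, and invoke~\eqref{eq:further_reduced_varepsilon_0}; the paper simply runs the parameter in the opposite direction. One small bookkeeping slip: with only $\overline{K}_\varepsilon\le 1$ your final sum is bounded below by $K^{(k)}_\varepsilon\bigl(1-\tfrac{r^{(k)}_{\varepsilon,+}}{2}\bigr)-r^{(k)}_{\varepsilon,+}\bigl(\tfrac{2}{\rho_0}+\tfrac12\bigr)$, which leaves an unabsorbed deficit $-K^{(k)}_\varepsilon r^{(k)}_{\varepsilon,+}/2$ relative to the stated hypothesis; using the actual bound $\overline{K}_\varepsilon<\tfrac12$ from~\eqref{eq:barK_hyp} (as the paper does, pairing the factor $\tfrac12$ from the self-term with the factor $\tfrac12$ from $\overline{K}_\varepsilon$) closes the inequality exactly with the constant $\tfrac{2}{\rho_0}+\tfrac12$.
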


In particular, $\widetilde\Gamma^{\varepsilon}_{\mathbf{D}_{k}}$ can
indeed be defined as the connected component of $\Omega \cap
(\varphi^\varepsilon)^{-1}\{0\}$ closest to~$x^{(k)}$. Note that the
second condition of~\eqref{eq:further_reduced_varepsilon_0} can indeed
be satisfied for $\varepsilon$ sufficiently small since~$r^{(k)}_{\varepsilon,+}= \mathrm{e}^{-C_+/K_\varepsilon^{(k)}}$ goes to zero much faster than $K_\varepsilon^{(k)}$.

\begin{proof}
  Fix $k \in \{1,\dots,K\}$ and $x \in \Omega \cap \mathbf{B}\left(x^{(k)},r^{(k)}_{\varepsilon,+}\right) \cap \left(\varphi^\varepsilon\right)^{-1}[0,+\infty)$. We prove that the segment $\{(1-t)x+tx^{(k)}, \ t\in[0,1]\}$ is included in $\Omega \cap \mathbf{B}\left(x^{(k)},r^{(k)}_{\varepsilon,+}\right) \cap \left(\varphi^\varepsilon\right)^{-1}[0,+\infty)$. This implies that $\Omega \cap \mathbf{B}\left(x^{(k)},r^{(k)}_{\varepsilon,+}\right) \cap \left(\varphi^\varepsilon\right)^{-1}[0,+\infty)$ is star shaped with respect to $x^{(k)}$.
  
Consider the function $[0,1) \ni t \mapsto h(t) = \varphi^\varepsilon((1-t)x+tx^{(k)})$. It suffices to prove that~$h$ is nondecreasing for $t \in [0,1)$. Indeed, if this is the case, then $h(t) \ge h(0)\ge 0$ for any~$t \in [0,1)$, so that $\{(1-t)x+tx^{(k)}, \ t\in[0,1]\}$ is included in $\Omega \cap \mathbf{B}\left(x^{(k)},r^{(k)}_{\varepsilon,+}\right) \cap \left(\varphi^\varepsilon\right)^{-1}[0,+\infty)$.

To prove that~$h$ is nondecreasing, we show that its derivative is nonnegative. In view of~\eqref{QuasiNhole} and~\eqref{grad f_k}, a simple computation shows that
  \[
  \begin{aligned}
  h'(t) & = \frac{K_\varepsilon^{(k)}}{\sqrt{\pi}} \left(x-x^{(k)} \right) \cdot \left[\frac{x-x^{(k)}}{(1-t)|x-x^{(k)}|^2} - \frac{(1-t)x+tx^{(k)}}{2}\right] \\
  & \quad + \left(x-x^{(k)} \right) \cdot \sum_{\ell \neq k} \frac{K_\varepsilon^{(\ell)}}{\sqrt{\pi}} \nabla f_\ell\left((1-t)x+tx^{(k)}\right) \\
  & \geq \frac{K_\varepsilon^{(k)}}{\sqrt{\pi}} \left(\frac{1}{1-t} - \frac12\right) - r^{(k)}_{\varepsilon,+} \sum_{\ell \neq k} \frac{K_\varepsilon^{(\ell)}}{\sqrt{\pi}} \left|\nabla f_\ell\left((1-t)x+tx^{(k)}\right) \right|,
  \end{aligned}
  \]
  where we used $|(1-t)x+t x^{(k)}|\leq 1$ for all $t \in [0,1]$ and~$|x-x^{(k)}| \leq r^{(k)}_{\varepsilon,+} \leq 1$. For $\ell \neq k$, it holds, for any~$t \in [0,1]$,
  \begin{align*}
\left|\nabla f_\ell\left((1-t)x+tx^{(k)}\right) \right|
&\le
\frac{1}{|(1-t)(x-x^{(k)}) + (x^{(k)} -x^{(\ell)})|} + \frac{1}{2} \leq \frac{1}{\rho_0- r^{(k)}_{\varepsilon,+}} + \frac{1}{2},
  \end{align*}
  since $|(1-t)(x-x^{(k)}) + (x^{(k)} -x^{(\ell)})| \geq |x^{(k)} -x^{(\ell)}| - (1-t) |x-x^{(k)}| \geq \rho_0 - (1-t)r^{(k)}_{\varepsilon,+} \geq \rho_0 - r^{(k)}_{\varepsilon,+} >0$ in view of~\eqref{eq:further_reduced_varepsilon_0}.
Thus, using~\eqref{eq:barK_hyp},
 \begin{align*}
  h'(t)&\ge
   \frac{K_\varepsilon^{(k)}}{2\sqrt{\pi}} - r^{(k)}_{\varepsilon,+} \frac{\overline{K}_\varepsilon}{\sqrt{\pi}} \left(\frac{1}{\rho_0- r^{(k)}_{\varepsilon,+}} + \frac{1}{2}\right) \ge \frac{1}{2\sqrt{\pi}} \left[ K_\varepsilon^{(k)} - r^{(k)}_{\varepsilon,+} \left(\frac{1}{\rho_0- r^{(k)}_{\varepsilon,+}} + \frac{1}{2}\right)\right] \geq 0,
  \end{align*}
 where the last inequality follows from the two conditions
 in~\eqref{eq:further_reduced_varepsilon_0}. This allows us to conclude the proof.
\end{proof}

\subsection{Useful technical results}
\label{sec:lemmas_for_exit_time}

Let us now state several lemmas that will be useful to prove Theorem~\ref{asymlambda N hole}.
The following lemma concerns the function ${\varphi}^{\varepsilon}$.

\begin{lem}\label{laplacian of the quasimode N hole}
Let ${\varphi}^{\varepsilon}$ be the function  defined in~\eqref{QuasiNhole}. Then, ${\varphi}^{\varepsilon}$ belongs to~$\mathcal{D}(\widetilde{\mathcal{L}}^{\varepsilon})$ and satisfies 
\begin{align}\label{laplacian of phi}
    \Delta{\varphi}^{\varepsilon}=\frac{\overline{K}_{\varepsilon}}{\sqrt{\pi}} \text{ in } \widetilde{\Omega}_\varepsilon,
\end{align}
where $\overline{K}_{\varepsilon}$ is defined by~\eqref{eq:barK}.
   \end{lem}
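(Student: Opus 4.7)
The plan is to check the two assertions of the lemma in turn, using the three properties of the auxiliary functions $f_k$ already established in Lemma~\ref{prop of f_k}, namely $\Delta f_k = -1$ in $\Omega$, $\partial_n f_k = 0$ on $\partial\Omega \setminus \{x^{(k)}\}$, and the explicit formula~\eqref{QuasiNhole}. The whole point is that the only singularities of $\varphi^\varepsilon$ are the points $x^{(k)}$, and these are excluded from $\overline{\widetilde{\Omega}_\varepsilon}$ by the very definition~\eqref{The regularity domain} of the modified domain.

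First I would verify the identity~\eqref{laplacian of phi} pointwise. By Lemma~\ref{Cercle 1} applied with $\alpha=1$ and $C_{-}=C_{1,-}$, any point $x$ with $|x-x^{(k)}| \le r_{\varepsilon,-}^{(k)}$ satisfies $\varphi^\varepsilon(x) > 0$, so that $x^{(k)} \notin \overline{\widetilde{\Omega}_\varepsilon}$ for every $k \in \{1,\ldots,N\}$. In particular, each $f_k$ is smooth on an open neighborhood of $\overline{\widetilde{\Omega}_\varepsilon}$, and one may apply $\Delta f_k = -1$ termwise in~\eqref{QuasiNhole} to obtain
\[
\Delta \varphi^\varepsilon = -\frac{1}{\sqrt{\pi}} \sum_{k=1}^N K_\varepsilon^{(k)} \Delta f_k = \frac{1}{\sqrt{\pi}} \sum_{k=1}^N K_\varepsilon^{(k)} = \frac{\overline{K}_\varepsilon}{\sqrt{\pi}}
\]
pointwise in $\widetilde{\Omega}_\varepsilon$.

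Next I would verify that $\varphi^\varepsilon \in \mathcal{D}(\widetilde{\mathcal{L}}^\varepsilon)$, i.e.\ that $\varphi^\varepsilon \in H^1(\widetilde{\Omega}_\varepsilon)$ with $\Delta \varphi^\varepsilon \in L^2(\widetilde{\Omega}_\varepsilon)$, and that the mixed boundary conditions~\eqref{tilde domain} are satisfied. Smoothness of $\varphi^\varepsilon$ on an open neighborhood of $\overline{\widetilde{\Omega}_\varepsilon}$ (again using that $x^{(k)}\notin\overline{\widetilde{\Omega}_\varepsilon}$) and boundedness of $\widetilde{\Omega}_\varepsilon$ give both $\varphi^\varepsilon \in H^1(\widetilde{\Omega}_\varepsilon)$ and, since $\Delta\varphi^\varepsilon$ is the constant $\overline{K}_\varepsilon/\sqrt{\pi}$, also $\Delta \varphi^\varepsilon \in L^2(\widetilde{\Omega}_\varepsilon)$. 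The Dirichlet condition $\varphi^\varepsilon = 0$ on $\widetilde{\Gamma}_{\mathbf{D}}^\varepsilon$ is immediate since, by construction, $\widetilde{\Gamma}_{\mathbf{D}}^\varepsilon \subset (\varphi^\varepsilon)^{-1}\{0\}$. For the Neumann condition, I would use that $\widetilde{\Gamma}_{\mathbf{N}}^\varepsilon \subset \partial \Omega \setminus \{x^{(1)},\ldots,x^{(N)}\}$ together with~\eqref{normal of f_k}, which gives $\partial_n f_k = 0$ on every such $\widetilde{\Gamma}_{\mathbf{N}}^\varepsilon$, and hence $\partial_n \varphi^\varepsilon = 0$ there by linearity.

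There is no real obstacle in this proof: everything reduces to having the $x^{(k)}$ outside $\overline{\widetilde{\Omega}_\varepsilon}$, which is exactly the content of Lemma~\ref{Cercle 1}, combined with the Neumann and Laplacian identities for the $f_k$ already proved. The mildest subtlety to address is that the Neumann trace on $\widetilde{\Gamma}_{\mathbf{N}}^\varepsilon$ is taken pointwise rather than in $H^{-1/2}$, which is legitimate here because $\varphi^\varepsilon$ is smooth up to $\widetilde{\Gamma}_{\mathbf{N}}^\varepsilon$.
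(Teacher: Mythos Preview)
Your proof is correct and follows essentially the same route as the paper's: both use that $\widetilde{\Gamma}^\varepsilon_{\mathbf{D}}\subset(\varphi^\varepsilon)^{-1}\{0\}$ by construction, that $\partial_n f_k=0$ on $\partial\Omega\setminus\{x^{(k)}\}$ (equation~\eqref{normal of f_k}), and that $\Delta f_k=-1$ (equation~\eqref{laplacian fk}). Your version is simply more explicit about why $\varphi^\varepsilon$ is smooth up to $\overline{\widetilde{\Omega}_\varepsilon}$, invoking Lemma~\ref{Cercle 1} to exclude the singularities $x^{(k)}$, whereas the paper states $\varphi^\varepsilon\in C^\infty(\widetilde{\Omega}_\varepsilon)$ without further comment.
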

   
\begin{proof}
  The function  ${\varphi}^{\varepsilon}$ belongs to $ C^{\infty}(\widetilde{\Omega}_{\varepsilon})$  and satisfies ${\varphi}^{\varepsilon}=0$ on~$\widetilde{\Gamma}^{\varepsilon}_{\mathbf{D}}$ by construction. The equality~\eqref{normal of f_k} implies moreover that~$\partial_n {\varphi}^{\varepsilon}=0$ on~$\widetilde{\Gamma}^{\varepsilon}_{\mathbf{N}}$, so that ${\varphi}^{\varepsilon}\in  \mathcal{D}(\widetilde{\mathcal{L}}^{\varepsilon})$. Finally, \eqref{laplacian of phi} is a consequence of~\eqref{QuasiNhole} and~\eqref{laplacian fk}, which concludes the proof. 
\end{proof}

We can then write the following estimate on the quasi-mode ${\varphi}^{\varepsilon}$ for~$\widetilde{\mathcal{L}}^{\varepsilon}$, giving in particular bounds on the distance to normalized elements in the first eigenspace~${\rm Ran}(\pi_{[0,c\hspace{0.02cm}\overline{K}_{\varepsilon}]}(\widetilde{\mathcal{L}}^{\varepsilon}))$.

\begin{prop}
  \label{estimate}
 There exists $C\in\mathbb{R}_{+}$ such that, for any $\varepsilon \in (0,\varepsilon_{0})$,
 \begin{align}
   \label{eq:varphi_minus_u}
    \left\|{\varphi}^{\varepsilon}-\pi_{[0,c\hspace{0.02cm}\overline{K}_{\varepsilon}]}(\widetilde{\mathcal{L}}^{\varepsilon}){\varphi}^{\varepsilon}\right\|_{L^2(\widetilde{\Omega}_{\varepsilon})}\leq C\overline{K}_{\varepsilon}.
\end{align}
In particular, upon possibly reducing~$\varepsilon_{0}$, it
holds~$\left\|\pi_{[0,c\hspace{0.02cm}\overline{K}_{\varepsilon}]}(\widetilde{\mathcal{L}}^{\varepsilon}){\varphi}^{\varepsilon}\right\|_{L^2(\widetilde{\Omega}_{\varepsilon})}
\geq 1/2$, so that the first eigenfunction
of~$\widetilde{\mathcal{L}}^{\varepsilon}$ (with normalization~\eqref{eq:normalization_widetilde_u_0_eps}) can be obtained from~$\varphi^\varepsilon$ as
\begin{align}\label{the firsteigenfunction}
     \widetilde{u}^{\varepsilon}_{0}=\frac{\pi_{[0,c\hspace{0.02cm}\overline{K}_{\varepsilon}]}(\widetilde{\mathcal{L}}^{\varepsilon}){\varphi}^{\varepsilon}}{\left\| \pi_{[0,c\hspace{0.02cm}\overline{K}_{\varepsilon}]}(\widetilde{\mathcal{L}}^{\varepsilon}){\varphi}^{\varepsilon} \right\|_{L^{2}(\widetilde{\Omega}_{\varepsilon})}}.
\end{align}
Moreover,  
\begin{equation}\label{eq:estimL2}
  \left\|{\varphi}^{\varepsilon}- \widetilde{u}^{\varepsilon}_{0}\right\|_{L^2(\widetilde{\Omega}_{\varepsilon})}\leq C\overline{K}_{\varepsilon}.
\end{equation}
In particular, ${\varphi}^{\varepsilon}$ is a quasi-mode associated with the eigenvalue $\widetilde{\lambda}^{\varepsilon}_{0}$.
\end{prop}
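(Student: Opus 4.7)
The plan is to rely on item~(R3), which guarantees that $\pi_{[0,c\hspace{0.01cm}\overline{K}_{\varepsilon}]}(\widetilde{\mathcal{L}}^{\varepsilon})$ is the rank-one orthogonal projection onto $\operatorname{span}(\widetilde{u}_0^\varepsilon)$, coupled with the uniform lower bound $\widetilde{\lambda}_1^\varepsilon \geq \underline{\lambda}_1^{\mathbf{N}}$ provided by Lemma~\ref{lower bound}, and with the key identity $\widetilde{\mathcal{L}}^\varepsilon \varphi^\varepsilon = -\overline{K}_\varepsilon/\sqrt{\pi}$ given by Lemma~\ref{laplacian of the quasimode N hole}. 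First, I would set $v^\varepsilon := \varphi^\varepsilon - \pi_{[0,c\hspace{0.01cm}\overline{K}_{\varepsilon}]}(\widetilde{\mathcal{L}}^{\varepsilon})\varphi^\varepsilon$, which is by construction $L^2$-orthogonal to $\widetilde{u}_0^\varepsilon$ and lies in the form domain of $\widetilde{\mathcal{L}}^\varepsilon$ since $\varphi^\varepsilon$ belongs to $\mathcal{D}(\widetilde{\mathcal{L}}^\varepsilon)$ by Lemma~\ref{laplacian of the quasimode N hole} and the spectral projection preserves the domain.

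To prove~\eqref{eq:varphi_minus_u}, the spectral theorem combined with Lemma~\ref{lower bound} first gives $\langle v^\varepsilon, \widetilde{\mathcal{L}}^\varepsilon v^\varepsilon \rangle \geq \widetilde{\lambda}_1^\varepsilon \|v^\varepsilon\|^2 \geq \underline{\lambda}_1^{\mathbf{N}} \|v^\varepsilon\|^2$. Moreover, $\widetilde{\mathcal{L}}^\varepsilon \pi_{[0,c\hspace{0.01cm}\overline{K}_{\varepsilon}]}\varphi^\varepsilon = \widetilde{\lambda}_0^\varepsilon \pi_{[0,c\hspace{0.01cm}\overline{K}_{\varepsilon}]}\varphi^\varepsilon$ remains in the one-dimensional eigenspace, which is $L^2$-orthogonal to $v^\varepsilon$; consequently $\langle v^\varepsilon, \widetilde{\mathcal{L}}^\varepsilon v^\varepsilon \rangle = \langle v^\varepsilon, \widetilde{\mathcal{L}}^\varepsilon \varphi^\varepsilon \rangle$. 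Inserting the pointwise value $\widetilde{\mathcal{L}}^\varepsilon \varphi^\varepsilon = -\overline{K}_\varepsilon/\sqrt{\pi}$ and using Cauchy--Schwarz yields
\begin{equation*}
\underline{\lambda}_1^{\mathbf{N}} \|v^\varepsilon\|^2_{L^2(\widetilde{\Omega}_\varepsilon)} \leq \left|\langle v^\varepsilon, \widetilde{\mathcal{L}}^\varepsilon \varphi^\varepsilon \rangle\right| \leq \frac{\overline{K}_\varepsilon}{\sqrt{\pi}}\, |\widetilde{\Omega}_\varepsilon|^{1/2}\, \|v^\varepsilon\|_{L^2(\widetilde{\Omega}_\varepsilon)} \leq C\overline{K}_\varepsilon \|v^\varepsilon\|_{L^2(\widetilde{\Omega}_\varepsilon)},
\end{equation*}
which, after dividing by $\|v^\varepsilon\|_{L^2(\widetilde{\Omega}_\varepsilon)}$, directly gives~\eqref{eq:varphi_minus_u}.

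The remaining statements follow by elementary manipulations. Estimate~\eqref{eq:normphi} applied with $\alpha=1$ yields $\|\varphi^\varepsilon\|_{L^2(\widetilde{\Omega}_\varepsilon)} = 1 + \mathrm{O}(\overline{K}_\varepsilon)$, since $\widetilde{\Omega}_\varepsilon$ differs from $\Omega_{1,\varepsilon}$ only by a set of measure zero. The triangle inequality combined with~\eqref{eq:varphi_minus_u} then gives $\|\pi_{[0,c\hspace{0.01cm}\overline{K}_{\varepsilon}]}\varphi^\varepsilon\|_{L^2(\widetilde{\Omega}_\varepsilon)} \geq 1/2$ upon further reducing $\varepsilon_0$. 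Writing $\pi_{[0,c\hspace{0.01cm}\overline{K}_{\varepsilon}]}\varphi^\varepsilon = \langle \varphi^\varepsilon, \widetilde{u}_0^\varepsilon\rangle \widetilde{u}_0^\varepsilon$ and observing that $\varphi^\varepsilon \leq 0$ on $\widetilde{\Omega}_\varepsilon$ by the very definition~\eqref{The regularity domain} of $\widetilde{\Omega}_\varepsilon$, while $\widetilde{u}_0^\varepsilon < 0$ by the normalization~\eqref{eq:normalization_widetilde_u_0_eps}, the scalar $\langle \varphi^\varepsilon, \widetilde{u}_0^\varepsilon\rangle$ is strictly positive, which settles the identification~\eqref{the firsteigenfunction}. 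Finally, the Pythagorean identity $\|\varphi^\varepsilon\|^2 = \|v^\varepsilon\|^2 + |\langle \varphi^\varepsilon, \widetilde{u}_0^\varepsilon\rangle|^2$ combined with $\|v^\varepsilon\| = \mathrm{O}(\overline{K}_\varepsilon)$ and $\|\varphi^\varepsilon\|^2 = 1 + \mathrm{O}(\overline{K}_\varepsilon)$ forces $\langle \varphi^\varepsilon, \widetilde{u}_0^\varepsilon\rangle = 1 + \mathrm{O}(\overline{K}_\varepsilon)$, whence $\|\pi_{[0,c\hspace{0.01cm}\overline{K}_{\varepsilon}]}\varphi^\varepsilon - \widetilde{u}_0^\varepsilon\|_{L^2(\widetilde{\Omega}_\varepsilon)} = \mathrm{O}(\overline{K}_\varepsilon)$, and~\eqref{eq:estimL2} follows from a last triangle inequality.

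The main subtle point will be the handling of the spectral gap. A naive application of the projection estimate using only the gap of size $c\overline{K}_\varepsilon$ between $\widetilde{\lambda}_0^\varepsilon$ and the rest of the spectrum would merely produce $\|v^\varepsilon\| = \mathrm{O}(1)$, which is useless for the quasi-mode estimate. The sharp $\mathrm{O}(\overline{K}_\varepsilon)$ bound crucially exploits the much larger actual gap to the second eigenvalue, namely the uniform lower bound $\widetilde{\lambda}_1^\varepsilon \geq \underline{\lambda}_1^{\mathbf{N}} > 0$, together with the fact that both factors $\overline{K}_\varepsilon$ (from $\widetilde{\mathcal{L}}^\varepsilon \varphi^\varepsilon$) and the orthogonality of $v^\varepsilon$ to the first eigenspace combine multiplicatively in the Dirichlet form estimate.
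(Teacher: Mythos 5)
Your proposal is correct, and it reaches the key estimate~\eqref{eq:varphi_minus_u} by a genuinely different route than the paper. The paper writes $\left(1-\pi_{[0,c\overline{K}_\varepsilon]}(\widetilde{\mathcal{L}}^{\varepsilon})\right)\varphi^\varepsilon$ as a Riesz contour integral $-\frac{1}{2\pi \mathrm{i}}\oint z^{-1}(z-\widetilde{\mathcal{L}}^{\varepsilon})^{-1}\widetilde{\mathcal{L}}^{\varepsilon}\varphi^\varepsilon\,\mathrm{d}z$ over the circle of radius $\underline{\lambda}_1^{\mathbf{N}}/2$ and concludes via a uniform resolvent bound, whereas you work with the quadratic form: $\underline{\lambda}_1^{\mathbf{N}}\|v^\varepsilon\|^2 \le \langle v^\varepsilon, \widetilde{\mathcal{L}}^\varepsilon v^\varepsilon\rangle = \langle v^\varepsilon, \widetilde{\mathcal{L}}^\varepsilon \varphi^\varepsilon\rangle \le \|\widetilde{\mathcal{L}}^\varepsilon\varphi^\varepsilon\|\,\|v^\varepsilon\|$. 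Both arguments consume exactly the same two inputs — the uniform gap $\widetilde{\lambda}_1^\varepsilon \ge \underline{\lambda}_1^{\mathbf{N}}$ from Lemma~\ref{lower bound} via~\eqref{eq:l_u_bounds_vp}, and $\|\widetilde{\mathcal{L}}^\varepsilon\varphi^\varepsilon\|_{L^2} = \mathrm{O}(\overline{K}_\varepsilon)$ from Lemma~\ref{laplacian of the quasimode N hole} — and yield the same bound $\|v^\varepsilon\| \le \|\widetilde{\mathcal{L}}^\varepsilon\varphi^\varepsilon\|/\underline{\lambda}_1^{\mathbf{N}}$; your version is the more elementary of the two and is in fact the one the introduction advertises when it points to~\cite[Lemma 2.4.5]{Helffernier}, while the contour-integral version generalizes more readily to non-self-adjoint settings. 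Your closing remark correctly identifies the trap of using only the width $c\overline{K}_\varepsilon$ of the spectral window instead of the true gap. Two further points where you go slightly beyond the paper, both correctly: you verify the sign $\langle \varphi^\varepsilon, \widetilde{u}_0^\varepsilon\rangle > 0$ from $\varphi^\varepsilon \le 0$ on $\widetilde{\Omega}_\varepsilon$ and $\widetilde{u}_0^\varepsilon < 0$, where the paper simply declares~\eqref{the firsteigenfunction} to be the sign convention; and your Pythagorean derivation of~\eqref{eq:estimL2} is equivalent to the paper's identity~\eqref{eq:estimee_pi_varphi} followed by a triangle inequality.
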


Note that the equality~\eqref{the firsteigenfunction} fixes the sign convention for~$\widetilde{u}^{\varepsilon}_{0}$.

\begin{proof}
  We start by proving~\eqref{eq:varphi_minus_u}.
Note first that $\sigma(\widetilde{\mathcal L}^\varepsilon) \cap
B(0,\underline \lambda^{\mathbf{N}}_{1}/2) =
\{\widetilde{\lambda}_0^\varepsilon\}$ thanks to~(R3), (R2), and the lower bound on the second eigenvalue $\tilde{\lambda}_0^\varepsilon$ of $\widetilde{\mathcal L}^\varepsilon$ obtained in Lemma~\ref{lower bound}. Since ${\varphi}^{\varepsilon}\in  \mathcal{D}(\widetilde{\mathcal{L}}^{\varepsilon})$ (see Lemma \ref{laplacian of the quasimode N hole}), we obtain that
\begin{align*}
      \left(1-\pi_{[0,c\hspace{0.02cm}\overline{K}_{\varepsilon}]}(\widetilde{\mathcal{L}}^{\varepsilon})\right){\varphi}^{\varepsilon} &
      =\frac{1}{2\pi \mathrm{i}}\int_{\mathscr{C}\left(0,\,{ \underline \lambda^{\mathbf{N}}_{1}}/{2}\right)} \left(z^{-1} - (z-\widetilde{\mathcal{L}}^{\varepsilon})^{-1}\right){\varphi}^{\varepsilon}\,\mathrm{d}z \\
      & =-\frac{1}{2\pi \mathrm{i}}\int_{\mathscr{C}\left(0,\,{ \underline \lambda^{\mathbf{N}}_{1}}/{2}\right)}z^{-1}(z-\widetilde{\mathcal{L}}^{\varepsilon})^{-1}\widetilde{\mathcal{L}}^{\varepsilon}{\varphi}^{\varepsilon}\,\mathrm{d}z,
    \end{align*}
    where $\mathscr{C}\left(0,\, { \underline \lambda^{\mathbf{N}}_{1}}/{2}\right)\subset \mathbb{C}$ is the circle of radius ${ \underline \lambda^{\mathbf{N}}_{1}}/{2}$ centered at $0$. Using again the upper bound on~$\tilde{\lambda}_0^\varepsilon$ and the lower bound on~$\tilde{\lambda}_1^\varepsilon$, and classical resolvent estimates,
 we obtain that
    \begin{align*}
         \forall z \in \mathscr{C}\left(0,\, { \underline \lambda^{\mathbf{N}}_{1}}/{2}\right), 
         \qquad 
         \left\|(z- \widetilde{\mathcal{L}}^{\varepsilon})^{-1} \right\|_{\mathcal{B}({L^2(\widetilde{\Omega}_{\varepsilon})})}\leq \frac{C}{{ \underline \lambda^{\mathbf{N}}_{1}}},
    \end{align*} where $C>0$ is independent of $\varepsilon$. 
    Here and in the following,  $\mathcal{B}(L^2(\widetilde{\Omega}_{\varepsilon}))$ is the Banach space of bounded operators from $L^2(\widetilde{\Omega}_{\varepsilon})$ to $L^2(\widetilde{\Omega}_{\varepsilon})$, with~$\left\|\cdot \right\|_{\mathcal{B}({L^2(\widetilde{\Omega}_{\varepsilon})})}$ the associated operator norm. As a consequence, using Lemma \ref{laplacian of the quasimode N hole}, for $\varepsilon$ small enough, we have
    \begin{align*}
        \left\|  \left(1-\pi_{[0,c\hspace{0.02cm}\overline{K}_{\varepsilon}]}(\widetilde{\mathcal{L}}^{\varepsilon})\right){\varphi}^{\varepsilon}\right\|_{L^2(\widetilde{\Omega}_{\varepsilon})}\leq 
         \frac{C}{\underline \lambda^{\mathbf{N}}_{1}}\left\|\widetilde{\mathcal{L}}^{\varepsilon}{\varphi}^{\varepsilon}\right\|_{L^2(\widetilde{\Omega}_{\varepsilon})}\leq C\overline{K}_{\varepsilon},
    \end{align*}
    where $C>0$ is independent of $\varepsilon$, which concludes the
    proof of~\eqref{eq:varphi_minus_u}.

    An immediate consequence of~\eqref{eq:varphi_minus_u} is that
    \begin{equation}
      \label{eq:estimee_pi_varphi}
    \left\|\pi_{[0,c\hspace{0.02cm}\overline{K}_{\varepsilon}]}(\widetilde{\mathcal{L}}^{\varepsilon}){\varphi}^{\varepsilon}\right\|_{L^2(\widetilde{\Omega}_{\varepsilon})}^2 = \left\|{\varphi}^{\varepsilon}\right\|_{L^2(\widetilde{\Omega}_{\varepsilon})}^2 - \left\|{\varphi}^{\varepsilon}-\pi_{[0,c\hspace{0.02cm}\overline{K}_{\varepsilon}]}(\widetilde{\mathcal{L}}^{\varepsilon}){\varphi}^{\varepsilon}\right\|_{L^2(\widetilde{\Omega}_{\varepsilon})}^2 = 1 + \mathrm{O}(\overline{K}_\varepsilon),
    \end{equation}
    where we used the estimate~\eqref{eq:normphi} on~$\| {\varphi}^{\varepsilon}\|_{L^{2}(\widetilde{\Omega}_{\varepsilon})}$ (with~$\alpha=1$).
    
    Let us finally turn to~\eqref{eq:estimL2}. Using the triangle inequality, then~\eqref{eq:varphi_minus_u} and~\eqref{eq:estimee_pi_varphi},
    \[
    \begin{aligned}
      \left\|{\varphi}^{\varepsilon}- \widetilde{u}^{\varepsilon}_{0}\right\|_{L^2(\widetilde{\Omega}_{\varepsilon})} & \leq \left\|{\varphi}^{\varepsilon}-\pi_{[0,c\hspace{0.02cm}\overline{K}_{\varepsilon}]}(\widetilde{\mathcal{L}}^{\varepsilon}){\varphi}^{\varepsilon} \right\|_{L^2(\widetilde{\Omega}_{\varepsilon})} + \left\|\pi_{[0,c\hspace{0.02cm}\overline{K}_{\varepsilon}]}(\widetilde{\mathcal{L}}^{\varepsilon}){\varphi}^{\varepsilon}-\frac{\pi_{[0,c\hspace{0.02cm}\overline{K}_{\varepsilon}]}(\widetilde{\mathcal{L}}^{\varepsilon}){\varphi}^{\varepsilon}}{\| \pi_{[0,c\hspace{0.02cm}\overline{K}_{\varepsilon}]}(\widetilde{\mathcal{L}}^{\varepsilon}){\varphi}^{\varepsilon}\|_{L^{2}(\widetilde{\Omega}_{\varepsilon})}} \right\|_{L^2(\widetilde{\Omega}_{\varepsilon})} \\
      & = \left\|{\varphi}^{\varepsilon}-\pi_{[0,c\hspace{0.02cm}\overline{K}_{\varepsilon}]}(\widetilde{\mathcal{L}}^{\varepsilon}){\varphi}^{\varepsilon} \right\|_{L^2(\widetilde{\Omega}_{\varepsilon})} + 1- \left\|\pi_{[0,c\hspace{0.02cm}\overline{K}_{\varepsilon}]}(\widetilde{\mathcal{L}}^{\varepsilon}){\varphi}^{\varepsilon}\right\|_{L^2(\widetilde{\Omega}_{\varepsilon})} = \mathrm{O}(\overline{K}_\varepsilon),
    \end{aligned}
    \]    
    which allows to conclude the proof of Proposition~\ref{estimate}.
\end{proof}
 
\subsection{Proof of Theorem~\ref{asymlambda N hole}}
\label{sec:proof_thm_exit_times}

We are now in position to prove Theorem \ref{asymlambda N hole}. It is convenient to introduce the normalized function
\begin{align}\label{normalizedquasimode}
    {\psi}^{\varepsilon}=\frac{{\varphi}^{\varepsilon}}{\| {\varphi}^{\varepsilon}\|_{L^{2}(\widetilde{\Omega}_{\varepsilon})}}.
\end{align}
We claim that 
\begin{equation}
  \label{eq:norm_pi_psi}
  \left\|\pi_{[0,c\hspace{0.02cm}\overline{K}_{\varepsilon}]}(\widetilde{\mathcal{L}}^{\varepsilon}){\psi}^{\varepsilon}\right\|_{L^2(\widetilde{\Omega}_{\varepsilon})} = 1 + \mathrm{O}\left( \overline{K}^{2}_{\varepsilon} \right),
\end{equation}
and
\begin{align}
    \label{valueofnorm}
    \left \|\psi^\varepsilon - \pi_{[0,c\hspace{0.02cm}\overline{K}_{\varepsilon}]}(\widetilde{\mathcal{L}}^{\varepsilon}){\psi}^{\varepsilon}\right\|_{L^2(\widetilde{\Omega}_{\varepsilon})} = \mathrm{O}\left( \overline{K}_{\varepsilon} \right),
\end{align}
Indeed, using~\eqref{eq:normphi}, \eqref{eq:varphi_minus_u} and the fact that~$\| {\psi}^{\varepsilon}\|_{L^{2}(\widetilde{\Omega}_{\varepsilon})}= 1$, 
 \begin{align*}
     \left\|\pi_{[0,c\hspace{0.02cm}\overline{K}_{\varepsilon}]}(\widetilde{\mathcal{L}}^{\varepsilon}){\psi}^{\varepsilon}\right\|_{L^2(\widetilde{\Omega}_{\varepsilon})}^2&=\|{\psi}^{\varepsilon}\|_{L^2(\widetilde{\Omega}_{\varepsilon})}^2-\left\|\left(1-\pi_{[0,c\hspace{0.02cm}\overline{K}_{\varepsilon}]}(\widetilde{\mathcal{L}}^{\varepsilon})\right){\psi}^{\varepsilon}\right\|_{L^2(\widetilde{\Omega}_{\varepsilon})}^2\\
     &=1-\frac{\left\|\left(1-\pi_{[0,c\hspace{0.02cm}\overline{K}_{\varepsilon}]}(\widetilde{\mathcal{L}}^{\varepsilon})\right){\varphi}^{\varepsilon}\right\|_{L^2(\widetilde{\Omega}_{\varepsilon})}^2}{\|\varphi^\varepsilon\|_{L^2(\widetilde{\Omega}_{\varepsilon})}^2}=1+\frac{\mathrm{O}\left(\overline{K}_\varepsilon^2\right)}{1+\mathrm{O}(\overline{K}_\varepsilon)},
 \end{align*}
 which leads to~\eqref{eq:norm_pi_psi}. The estimate~\eqref{valueofnorm} then follows from the equality
 \[
 \left \|\psi^\varepsilon - \pi_{[0,c\hspace{0.02cm}\overline{K}_{\varepsilon}]}(\widetilde{\mathcal{L}}^{\varepsilon}){\psi}^{\varepsilon}\right\|_{L^2(\widetilde{\Omega}_{\varepsilon})}^2 = 1 - \left\|\pi_{[0,c\hspace{0.02cm}\overline{K}_{\varepsilon}]}(\widetilde{\mathcal{L}}^{\varepsilon}){\psi}^{\varepsilon}\right\|_{L^2(\widetilde{\Omega}_{\varepsilon})}^2.
 \]

 We can now turn to the estimation of the first eigenvalue. Noting that~\eqref{the firsteigenfunction} holds with~$\varphi^\varepsilon$ replaced by~$\psi^\varepsilon$, and using next~\eqref{eq:norm_pi_psi}, 
    \begin{align}\label{eq1}
    \begin{split}
    \widetilde{\lambda}_{0}^{\varepsilon}= -\left\langle \Delta \widetilde{u}^{\varepsilon}_{0}, \widetilde{u}^{\varepsilon}_{0}\right\rangle_{L^2(\widetilde{\Omega}_{\varepsilon})}=-\left\langle \Delta{\psi}^{\varepsilon}, \pi_{[0,c\hspace{0.02cm}\overline{K}_{\varepsilon}]}(\widetilde{\mathcal{L}}^{\varepsilon}){\psi}^{\varepsilon}\right\rangle_{L^2(\widetilde{\Omega}_{\varepsilon})}\left(1+\mathrm{O}\left(\overline{K}^{2}_{\varepsilon}\right)\right),
         \end{split}
    \end{align}
    since the orthogonal projector $\pi_{[0,c\hspace{0.02cm}\overline{K}_{\varepsilon}]}(\widetilde{\mathcal{L}}^{\varepsilon})$  and $\widetilde{\mathcal{L}}^{\varepsilon}$ 
commute on $\mathcal{D}(\widetilde{\mathcal{L}}^{\varepsilon})$ and~${\psi}^{\varepsilon}\in  \mathcal{D}(\widetilde{\mathcal{L}}^{\varepsilon})$. In addition,
\begin{align*}
    \left\langle \Delta{\psi}^{\varepsilon}, \pi_{[0,c\hspace{0.02cm}\overline{K}_{\varepsilon}]}(\widetilde{\mathcal{L}}^{\varepsilon}){\psi}^{\varepsilon}\right\rangle_{L^2(\widetilde{\Omega}_{\varepsilon})}= \left\langle \Delta{\psi}^{\varepsilon}, {\psi}^{\varepsilon}\right\rangle_{L^2(\widetilde{\Omega}_{\varepsilon})}-\left\langle \Delta{\psi}^{\varepsilon}, \left(1-\pi_{[0,c\hspace{0.02cm}\overline{K}_{\varepsilon}]}(\widetilde{\mathcal{L}}^{\varepsilon})\right){\psi}^{\varepsilon}\right\rangle_{L^2(\widetilde{\Omega}_{\varepsilon})}.
\end{align*}
 For the second term on the right hand side of the previous equality, 
 \begin{align}\label{eq2}
 \begin{split}
     \left|\left\langle \Delta{\psi}^{\varepsilon}, \left(1-\pi_{[0,c\hspace{0.02cm}\overline{K}_{\varepsilon}]}(\widetilde{\mathcal{L}}^{\varepsilon})\right){\psi}^{\varepsilon}\right\rangle_{L^2(\widetilde{\Omega}_{\varepsilon})}\right|&\leq   \left\|\left(1-\pi_{[0,c\hspace{0.02cm}\overline{K}_{\varepsilon}]}(\widetilde{\mathcal{L}}^{\varepsilon})\right){\psi}^{\varepsilon}\right\|_{L^2(\widetilde{\Omega}_{\varepsilon})} \|\Delta{\psi}^{\varepsilon}\|_{L^2(\widetilde{\Omega}_{\varepsilon})}\\&\leq C\overline{K}^{2}_{\varepsilon},
      \end{split}
 \end{align}
 where we used~\eqref{valueofnorm} for the first factor, and Lemma \ref{laplacian of the quasimode N hole} together with the estimate~\eqref{eq:normphi} (with~$\alpha=1$) for the second one.
 For the first term, one has
 \begin{align*}
     \left\langle \Delta{\psi}^{\varepsilon}, {\psi}^{\varepsilon}\right\rangle_{L^2(\widetilde{\Omega}_{\varepsilon})}=\left\langle \Delta{\varphi}^{\varepsilon}, {\varphi}^{\varepsilon}\right\rangle_{L^2(\widetilde{\Omega}_{\varepsilon})}\left(1+\mathrm{O}\left(\overline{K}_{\varepsilon}\right)\right),
 \end{align*}
 where we used again the estimate~\eqref{eq:normphi} on $\| {\varphi}^{\varepsilon}\|_{L^{2}(\widetilde{\Omega}_{\varepsilon})}$. Moreover, using~\eqref{QuasiNhole} and~\eqref{laplacian of phi},   
 \begin{align*}
     &\left\langle \Delta{\varphi}^{\varepsilon}, {\varphi}^{\varepsilon}\right\rangle_{L^2(\widetilde{\Omega}_{\varepsilon})}\\
     &\qquad =  -\frac{\overline{K}_{\varepsilon}}{\pi}\int_{\widetilde{\Omega}_{\varepsilon}}\mathrm{d}x- \frac{\overline{K}_{\varepsilon}}{\pi}\sum_{k=1}^{N}{K}^{(k)}_{\varepsilon}\int_{\widetilde{\Omega}_{\varepsilon}}\text{log}\hspace{0.1cm}|x-x^{(k)}|\,\mathrm{d}x- \frac{\overline{K}^{2}_{\varepsilon}}{\pi}\int_{\widetilde{\Omega}_{\varepsilon}}\left(\frac{1-|x|^{2}}{4}\right)\,\mathrm{d}x\\
    &\qquad =- \overline{K}_{\varepsilon} +\frac{\overline{K}_{\varepsilon}}{\pi}\left|\Omega\backslash\widetilde{\Omega}_{\varepsilon}\right| - \frac{\overline{K}_{\varepsilon}}{\pi}\sum_{k=1}^{N}{K}^{(k)}_{\varepsilon}\int_{\widetilde{\Omega}_{\varepsilon}}\text{log}\hspace{0.1cm}|x-x^{(k)}|\,\mathrm{d}x- \frac{\overline{K}^{2}_{\varepsilon}}{\pi}\int_{\widetilde{\Omega}_{\varepsilon}}\left(\frac{1-|x|^{2}}{4}\right)\,\mathrm{d}x.
 \end{align*}
 In view of Lemma~\ref{levelset}, it holds
$|\Omega\backslash\widetilde{\Omega}_{\varepsilon}|\leq \pi\displaystyle\sum_{k=1}^{N}\left({r^{(k)}_{\varepsilon,+}}\right)^{2} = \mathrm{O}(\overline{K}_\varepsilon)$.
Concerning the second term,
 \begin{align*}
 \frac{\overline{K}_{\varepsilon}}{\pi} \sum_{k=1}^{N}{K}^{(k)}_{\varepsilon}\int_{\widetilde{\Omega}_{\varepsilon}}\text{log}\hspace{0.1cm}|x-x^{(k)}|\,\mathrm{d}x = \mathrm{O}\left( \overline{K}^{2}_{\varepsilon} \right). 
\end{align*}
Finally, 
\begin{align*}
  \frac{\overline{K}^2_{\varepsilon}}{\pi}\left\vert\int_{\widetilde{\Omega}_{\varepsilon}}\left(\frac{1-|x|^{2}}{4}\right)\,\mathrm{d}x\right\vert \leq \frac{\overline{K}^2_{\varepsilon}}{4}.
\end{align*}
The proof of Theorem~\ref{asymlambda N hole} follows by gathering the above estimates in~\eqref{eq1}.
    
 \section{Law of the first exit point}\label{Section Law of the first exit point}
 
 We  study in this section the law of the first exit point $X_{\widetilde{\tau_{\varepsilon}}}$ from the domain $\widetilde{\Omega}_{\varepsilon}$, where $\widetilde{\Omega}_{\varepsilon}$ and~$\widetilde{\tau_\varepsilon}$ are respectively defined in~\eqref{The regularity domain} and~\eqref{tildetau}. Recall that the law of $X_{\widetilde{\tau_{\varepsilon}}}$ is given by  (see~\eqref{law of tildeX_t}) 
    \begin{align}\label{law of X_tildetau}\frac{\partial_{n}\widetilde{u}_{0}^{\varepsilon}(x) \, \sigma(dx)}{\displaystyle\int_{\partial \widetilde{\Omega}_\varepsilon}\partial_{n}\widetilde{u}_{0}^{\varepsilon}(y)\,\sigma(\mathrm{d}y)},
     \end{align}
     where $\sigma$ here denotes the Lebesgue surface measure on $\partial \widetilde{\Omega}_\varepsilon$. Let us recall that by Lemma~\ref{lem:reg_L2_normal_derivative}, $\partial_{n}\widetilde{u}_{0}^{\varepsilon}$ is well defined as an $L^2$ function on $\partial \widetilde{\Omega}_\varepsilon$.

We now prove several lemmas that lead to the result of Theorem~\ref{exit point}. We begin with two results on the $L^{2}(\widetilde{\Omega}_{\varepsilon})$-normalized principal eigenfunction~$\widetilde{u}^{\varepsilon}_{0}$ of~$\widetilde{\mathcal{L}}^{\varepsilon}$. Let us recall that the sign of this eigenfunction is fixed in order for~\eqref{the firsteigenfunction} to hold.

\begin{lem}\label{integral of normalderivative on GammaD}
  The principal eigenfunction~$\widetilde{u}^{\varepsilon}_{0}$ satisfies, for any~$\varepsilon\in(0,\varepsilon_0)$,
\begin{align}\label{First estimate exit point}
\int_{\widetilde{\Gamma}^{\varepsilon}_{\mathbf{D}}} \partial_n \widetilde{u}^{\varepsilon}_{0}\,\mathrm{d}\sigma = \sqrt{\pi}\ \overline{K}_\varepsilon\left(1+\mathrm{O}\left(\overline{K}_\varepsilon\right)\right).
    \end{align}
\end{lem}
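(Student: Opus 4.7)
The plan is to exploit the eigenvalue equation together with the divergence theorem, and then replace~$\widetilde{u}^\varepsilon_0$ by the quasimode~$\varphi^\varepsilon$ in the volume integral that appears.

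First, I would integrate $-\Delta \widetilde{u}_0^\varepsilon = \widetilde{\lambda}_0^\varepsilon \widetilde{u}_0^\varepsilon$ over $\widetilde{\Omega}_\varepsilon$ and apply the divergence theorem. Thanks to Lemma~\ref{lem:reg_L2_normal_derivative}, $\partial_n \widetilde{u}_0^\varepsilon$ is well defined as an $L^2$ function on the entire boundary, which makes the integration by parts rigorous. Since $\partial_n \widetilde{u}_0^\varepsilon = 0$ on $\widetilde{\Gamma}_{\mathbf{N}}^\varepsilon$, this yields the key identity
\begin{equation*}
\int_{\widetilde{\Gamma}^\varepsilon_{\mathbf{D}}} \partial_n \widetilde{u}_0^\varepsilon \, \mathrm{d}\sigma = \int_{\widetilde{\Omega}_\varepsilon} \Delta \widetilde{u}_0^\varepsilon \, \mathrm{d}x = -\widetilde{\lambda}_0^\varepsilon \int_{\widetilde{\Omega}_\varepsilon} \widetilde{u}_0^\varepsilon \, \mathrm{d}x.
\end{equation*}

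Next, I would estimate $\int_{\widetilde{\Omega}_\varepsilon} \widetilde{u}_0^\varepsilon \, \mathrm{d}x$ by replacing $\widetilde{u}_0^\varepsilon$ with $\varphi^\varepsilon$. By the Cauchy--Schwarz inequality and the $L^2$ estimate~\eqref{eq:estimL2} from Proposition~\ref{estimate}, together with the fact that $|\widetilde{\Omega}_\varepsilon| \le \pi$,
\begin{equation*}
\left| \int_{\widetilde{\Omega}_\varepsilon} \left(\widetilde{u}_0^\varepsilon - \varphi^\varepsilon \right) \mathrm{d}x \right| \le |\widetilde{\Omega}_\varepsilon|^{1/2} \, \|\widetilde{u}_0^\varepsilon - \varphi^\varepsilon\|_{L^2(\widetilde{\Omega}_\varepsilon)} = \mathrm{O}(\overline{K}_\varepsilon).
\end{equation*}
Then I would compute the integral of $\varphi^\varepsilon$ directly using its explicit form~\eqref{QuasiNhole}: the constant term gives $-|\widetilde{\Omega}_\varepsilon|/\sqrt\pi = -\sqrt\pi + \mathrm{O}(\overline{K}_\varepsilon)$, using Lemma~\ref{levelset} to bound $|\Omega \setminus \widetilde{\Omega}_\varepsilon| \le \pi \sum_k (r^{(k)}_{\varepsilon,+})^2 = \mathrm{O}(\overline{K}_\varepsilon)$; the remaining sum is bounded by $\overline{K}_\varepsilon$ times a finite quantity involving $\|f_k\|_{L^1(\widetilde{\Omega}_\varepsilon)}$, which is controlled via~\eqref{Lp norm f_k} and Cauchy--Schwarz. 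The conclusion is
\begin{equation*}
\int_{\widetilde{\Omega}_\varepsilon} \widetilde{u}_0^\varepsilon \, \mathrm{d}x = -\sqrt{\pi} + \mathrm{O}(\overline{K}_\varepsilon).
\end{equation*}

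Finally, I would combine this with the asymptotic $\widetilde{\lambda}_0^\varepsilon = \overline{K}_\varepsilon + \mathrm{O}(\overline{K}_\varepsilon^2)$ from Theorem~\ref{asymlambda N hole} to conclude. I do not expect any serious obstacle: the main technical points are the justification of the integration by parts, which is handled by Lemma~\ref{lem:reg_L2_normal_derivative}, and checking that the sign conventions match (both $\widetilde{u}_0^\varepsilon$ and $\varphi^\varepsilon$ are negative on the bulk of $\widetilde{\Omega}_\varepsilon$, so the integral is negative and the minus sign in front of $\widetilde{\lambda}_0^\varepsilon$ produces the correct positive leading term $\sqrt\pi\,\overline{K}_\varepsilon$).
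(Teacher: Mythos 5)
Your proposal is correct, and it reaches the stated estimate with the right error term, but it follows a genuinely different and in fact more elementary route than the paper. The paper starts from $\widetilde{\lambda}_{0}^{\varepsilon}=\|\nabla\widetilde{u}^{\varepsilon}_{0}\|_{L^{2}(\widetilde{\Omega}_{\varepsilon})}^{2}$, uses the sign convention \eqref{the firsteigenfunction} together with the commutation of $\nabla$ with the spectral projector of the Laplacian on $1$-forms (Proposition~\ref{propertiesonpforms} in the appendix) to convert this into $\left\langle \nabla\varphi^{\varepsilon},\nabla\widetilde{u}^{\varepsilon}_{0}\right\rangle_{L^2(\widetilde{\Omega}_{\varepsilon})}=\overline{K}_{\varepsilon}\left(1+\mathrm{O}(\overline{K}_{\varepsilon})\right)$, and then applies Green's formula to $\left\langle \nabla\left(\tfrac{1}{\sqrt{\pi}}+\varphi^{\varepsilon}\right),\nabla\widetilde{u}^{\varepsilon}_{0}\right\rangle$ to isolate $\frac{1}{\sqrt{\pi}}\int_{\widetilde{\Gamma}^{\varepsilon}_{\mathbf{D}}}\partial_n\widetilde{u}^{\varepsilon}_{0}$, the volume term being $\mathrm{O}(\overline{K}_{\varepsilon}^{2})$. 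You instead integrate the eigenvalue equation over $\widetilde{\Omega}_{\varepsilon}$ and apply the divergence theorem once, reducing everything to $-\widetilde{\lambda}_{0}^{\varepsilon}\int_{\widetilde{\Omega}_{\varepsilon}}\widetilde{u}^{\varepsilon}_{0}$, which you then evaluate via \eqref{eq:estimL2}, \eqref{Lp norm f_k}, Lemma~\ref{levelset} and Theorem~\ref{asymlambda N hole}; the bookkeeping $(\overline{K}_{\varepsilon}+\mathrm{O}(\overline{K}_{\varepsilon}^{2}))(\sqrt{\pi}+\mathrm{O}(\overline{K}_{\varepsilon}))=\sqrt{\pi}\,\overline{K}_{\varepsilon}(1+\mathrm{O}(\overline{K}_{\varepsilon}))$ indeed yields the claimed precision. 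Both arguments need Lemma~\ref{lem:reg_L2_normal_derivative} to make the boundary integral meaningful and to split it according to the Dirichlet and Neumann parts, both invoke Theorem~\ref{asymlambda N hole} (so neither is circular, as that theorem is established earlier and independently), and both rely on the sign convention \eqref{the firsteigenfunction} through \eqref{eq:estimL2}. What your approach buys is the complete avoidance of the $1$-form machinery; what the paper's approach buys is a formulation in terms of the Dirichlet form $\left\langle \nabla\varphi^{\varepsilon},\nabla\widetilde{u}^{\varepsilon}_{0}\right\rangle$, which is the same quantity that reappears (localized by cut-offs) in the per-window estimates of Lemma~\ref{exit point Lemma 3}, so the two computations share infrastructure there.
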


\begin{proof}
Recall that
$\widetilde{\lambda}_{0}^{\varepsilon}=\| \nabla\widetilde{u}^{\varepsilon}_{0}\|_{L^{2}(\widetilde{\Omega}_{\varepsilon})}^{2}=\left\langle \nabla\widetilde{u}^{\varepsilon}_{0},\nabla\widetilde{u}^{\varepsilon}_{0}\right\rangle_{L^2(\widetilde{\Omega}_{\varepsilon})}$. Using~\eqref{the firsteigenfunction}, the fact that
\[
\nabla\pi_{[0,c\overline{K}_{\varepsilon}]}(\widetilde{\mathcal{L}}^{\varepsilon})=\pi_{[0,c\overline{K}_{\varepsilon}]}(\Delta^{(1)})\nabla,
\]
where $\Delta^{(1)}$ here denotes the Laplacian on $1$-forms with mixed tangential-normal boundary conditions (see~\eqref{domain p laplacian} and Proposition \ref{propertiesonpforms}, applied with $\Omega=\widetilde{\Omega}_{\varepsilon}$, $\Gamma^{\varepsilon}_{\mathbf{D}}=\widetilde\Gamma^{\varepsilon}_{\mathbf{D}}$ and $\Gamma^{\varepsilon}_{\mathbf{N}}=\widetilde\Gamma^{\varepsilon}_{\mathbf{N}}$), and~$\pi_{[0,c\overline{K}_{\varepsilon}]}(\Delta^{(1)})\nabla\widetilde{u}^{\varepsilon}_{0}=\nabla\widetilde{u}^{\varepsilon}_{0}$, we get
\begin{align*}
  \widetilde{\lambda}_{0}^{\varepsilon}&=\frac{\left\langle \nabla {\varphi}^{\varepsilon},\nabla\widetilde{u}^{\varepsilon}_{0}\right\rangle_{L^2(\widetilde{\Omega}_{\varepsilon})}}{\left\| \pi_{[0,c\hspace{0.02cm}\overline{K}_{\varepsilon}]}(\widetilde{\mathcal{L}}^{\varepsilon}){\varphi}^{\varepsilon}\right\|_{L^{2}(\widetilde{\Omega}_{\varepsilon})}}. 
\end{align*}
Using~\eqref{eq:estimee_pi_varphi} and Theorem~\ref{asymlambda N hole}, it follows that
\begin{align}
  \left\langle \nabla {\varphi}^{\varepsilon},\nabla\widetilde{u}^{\varepsilon}_{0}\right\rangle_{L^2(\widetilde{\Omega}_{\varepsilon})}=\overline{K}_{\varepsilon}\left(1+\mathrm{O}\left(\overline{K}_{\varepsilon}\right)\right).
\end{align}

Besides, using Green's formula and the regularity result from Lemma~\ref{lem:reg_L2_normal_derivative}, 
\begin{align*}
     \left\langle \nabla {\varphi}^{\varepsilon},\nabla\widetilde{u}^{\varepsilon}_{0}\right\rangle_{L^2(\widetilde{\Omega}_{\varepsilon})}&= \left\langle\nabla  \left(\frac{1}{\sqrt{\pi}}+{\varphi}^{\varepsilon}\right),\nabla\widetilde{u}^{\varepsilon}_{0}\right\rangle_{L^2(\widetilde{\Omega}_{\varepsilon})}\\
     &=-\left\langle  \left(\frac{1}{\sqrt{\pi}}+{\varphi}^{\varepsilon}\right),\Delta\widetilde{u}^{\varepsilon}_{0}\right\rangle_{L^2(\widetilde{\Omega}_{\varepsilon})}+ \int_{\partial\widetilde{\Omega}_{\varepsilon}}\left(\frac{1}{\sqrt{\pi}}+{\varphi}^{\varepsilon}\right)\partial_n \widetilde{u}^{\varepsilon}_{0}.
\end{align*}
Since ${\varphi}^{\varepsilon}=0$ on $\widetilde{\Gamma}^{\varepsilon}_{\mathbf{D}}$ and $\partial_n {\widetilde u}^{\varepsilon}_{0}=0$ on $\widetilde{\Gamma}^{\varepsilon}_{\mathbf{N}}$, the last term in the previous equality is equal to
\begin{align}
   \int_{\partial\widetilde{\Omega}_{\varepsilon}}\left(\frac{1}{\sqrt{\pi}}+{\varphi}^{\varepsilon}\right)\partial_n \widetilde{u}^{\varepsilon}_{0} = \frac{1}{\sqrt{\pi}}\int_{\widetilde{\Gamma}^{\varepsilon}_{\mathbf{D}}}\partial_n \widetilde{u}^{\varepsilon}_{0}.
\end{align}
Moreover, 
\begin{align*}
  \left| \left\langle  \left(\frac{1}{\sqrt{\pi}}+{\varphi}^{\varepsilon}\right),\Delta\widetilde{u}^{\varepsilon}_{0}\right\rangle_{L^2(\widetilde{\Omega}_{\varepsilon})} \right| &\leq \left\| \frac{1}{\sqrt{\pi}}+{\varphi}^{\varepsilon}\right\|_{L^{2}(\widetilde{\Omega}_{\varepsilon})}\| \Delta\widetilde{u}^{\varepsilon}_{0}\|_{L^{2}(\widetilde{\Omega}_{\varepsilon})}\\
    &= \widetilde{\lambda}_{0}^{\varepsilon}\left\| \frac{1}{\sqrt{\pi}}+{\varphi}^{\varepsilon}\right\|_{L^{2}(\widetilde{\Omega}_{\varepsilon})}
    \leq C\overline{K}^{2}_{\varepsilon},
\end{align*}
where we used Theorem~\ref{One eigenvalue results N holes} and
\begin{equation}
\label{eq:bound_varphi_eps_minus_cst}
\left\| \frac{1}{\sqrt{\pi}}+{\varphi}^{\varepsilon}\right\|_{L^{2}(\widetilde{\Omega}_{\varepsilon})}=\mathrm{O}(\overline{K}_\varepsilon)
\end{equation}
since (using~\eqref{Lp norm f_k} for the last inequality)
\begin{align}\label{difference with constant}
  \left\| \frac{1}{\sqrt{\pi}}+{\varphi}^{\varepsilon}\right\|_{L^2(\widetilde{\Omega}_{\varepsilon})}&= \frac{1}{\sqrt{\pi}}\left\|\displaystyle\sum_{k=1}^{N} K^{(k)}_{\varepsilon} f_{k}\right\|_{L^2(\widetilde{\Omega}_{\varepsilon}))} \leq \frac{1}{\sqrt{\pi}}\sum_{k=1}^{N} K^{(k)}_{\varepsilon} \left\|f_{k}\right\|_{L^2(\Omega)} \leq C\overline{K}_\varepsilon.
\end{align}   
This concludes the
proof of~\eqref{First estimate exit point}.
\end{proof}

\begin{lem}\label{exit point Lemma 3}
There exists $C\in\mathbb{R}_{+}$ such that, for any $k\in\{1,\dots,N\}$ and for any  $\varepsilon \in (0,\varepsilon_{0})$,
\begin{align}\label{second estimate exit point}
\left\vert\int_{\widetilde{\Gamma}^{\varepsilon}_{\mathbf{D}_{k}}}\partial_n \left(\widetilde{u}_{0}^{\varepsilon}-{\varphi}^{\varepsilon}\right)\mathrm{d}\sigma \right\vert\leq C \overline{K}^{3/2}_\varepsilon,
    \end{align}
    where $\widetilde{u}_{0}^{\varepsilon}$ is the  $L^{2}(\widetilde{\Omega}_{\varepsilon})$-normalized principal  eigenfunction  of $\widetilde{\mathcal{L}}^{\varepsilon}$ (with the appropriate sign convention such that~\eqref{the firsteigenfunction} holds) and ${\varphi}^{\varepsilon}$ is the quasi-mode defined in~\eqref{QuasiNhole}.
\end{lem}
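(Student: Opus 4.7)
The plan is to express the boundary integral as a bulk integral via Green's formula, using a well-chosen test function to isolate the contribution from $\widetilde{\Gamma}^{\varepsilon}_{\mathbf{D}_k}$ alone. Set $w^{\varepsilon} := \widetilde{u}_{0}^{\varepsilon} - \varphi^{\varepsilon}$. Note that $w^{\varepsilon} = 0$ on $\widetilde{\Gamma}^{\varepsilon}_{\mathbf{D}}$ and $\partial_{n} w^{\varepsilon} = 0$ on $\widetilde{\Gamma}^{\varepsilon}_{\mathbf{N}}$, and that
\[
\Delta w^{\varepsilon} = -\widetilde{\lambda}_{0}^{\varepsilon}\widetilde{u}_{0}^{\varepsilon} - \frac{\overline{K}_{\varepsilon}}{\sqrt{\pi}} = -\widetilde{\lambda}_{0}^{\varepsilon}\left(\widetilde{u}_{0}^{\varepsilon} + \frac{1}{\sqrt{\pi}}\right) + \frac{\widetilde{\lambda}_{0}^{\varepsilon} - \overline{K}_{\varepsilon}}{\sqrt{\pi}}.
\]
Using Theorem~\ref{asymlambda N hole}, the estimate~\eqref{eq:estimL2} from Proposition~\ref{estimate}, and~\eqref{eq:bound_varphi_eps_minus_cst} combined with the triangle inequality, each term on the right is $\mathrm{O}(\overline{K}_{\varepsilon}^{2})$ in $L^{2}(\widetilde{\Omega}_{\varepsilon})$, so $\|\Delta w^{\varepsilon}\|_{L^2(\widetilde{\Omega}_{\varepsilon})} = \mathrm{O}(\overline{K}_{\varepsilon}^{2})$.

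Next, I would introduce the harmonic test function $v_{k}$ characterized by
\[
\Delta v_{k} = 0 \text{ in } \widetilde{\Omega}_{\varepsilon}, \qquad v_{k} = \mathds{1}_{\widetilde{\Gamma}^{\varepsilon}_{\mathbf{D}_k}} \text{ on } \widetilde{\Gamma}^{\varepsilon}_{\mathbf{D}}, \qquad \partial_{n} v_{k} = 0 \text{ on } \widetilde{\Gamma}^{\varepsilon}_{\mathbf{N}}.
\]
Existence in $H^{1}(\widetilde{\Omega}_{\varepsilon})$ follows from a standard Lax--Milgram argument after lifting the Dirichlet data by a smooth extension supported near~$x^{(k)}$; by the maximum principle for mixed boundary conditions (using $\partial_{n} v_{k} = 0$ on the Neumann part), $0 \leq v_{k} \leq 1$, so $\|v_{k}\|_{L^{2}(\widetilde{\Omega}_{\varepsilon})} \leq \sqrt{\pi}$ uniformly in~$\varepsilon$.

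Applying Green's formula to the pair $(v_{k}, w^{\varepsilon})$ (justified by Lemma~\ref{lem:reg_L2_normal_derivative} and the fact that the Dirichlet and Neumann parts meet at angle $\pi/2$, so that both functions have $L^{2}$ normal derivatives on $\partial\widetilde{\Omega}_{\varepsilon}$), all boundary contributions vanish except the one on $\widetilde{\Gamma}^{\varepsilon}_{\mathbf{D}_k}$ (where $v_{k} \equiv 1$), yielding
\[
\int_{\widetilde{\Gamma}^{\varepsilon}_{\mathbf{D}_k}} \partial_{n} w^{\varepsilon} \, \mathrm{d}\sigma = \int_{\widetilde{\Omega}_{\varepsilon}} v_{k}\, \Delta w^{\varepsilon}\,\mathrm{d}x.
\]
A Cauchy--Schwarz inequality then gives the bound $\mathrm{O}(\overline{K}_{\varepsilon}^{2})$, which is even better than the claimed $\mathrm{O}(\overline{K}_{\varepsilon}^{3/2})$.

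The main technical obstacle is to rigorously justify the use of Green's formula given that both $v_{k}$ and $w^{\varepsilon}$ live only in $H^{3/2-\alpha}(\widetilde{\Omega}_{\varepsilon})$ and not in $H^{2}$, so neither function has an $H^{1/2}$ trace of the gradient on the whole boundary. This is precisely why the $\pi/2$ intersection angle~\eqref{eq:angle_intersection} and the $L^{2}$ regularity of normal derivatives established in Lemma~\ref{lem:reg_L2_normal_derivative} (via~\cite{jakab2009regularity,gol2011hodge}) are essential: they allow Green's identity to be applied with $L^{2}$ normal traces and bounded tangential traces, the mixed boundary conditions eliminating exactly the ill-defined cross terms. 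A secondary, more benign point is to construct the lifting of the Dirichlet data for $v_{k}$ with a cutoff localized near $x^{(k)}$ of diameter comparable to $\rho_{0}$, in order to keep $v_{k}$ bounded uniformly in $\varepsilon$.
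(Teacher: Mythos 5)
Your proposal is correct in spirit but takes a genuinely different route from the paper. Both start from the same first step (the identity $\Delta w^{\varepsilon} = -\widetilde{\lambda}_{0}^{\varepsilon}\widetilde{u}_{0}^{\varepsilon} - \overline{K}_{\varepsilon}/\sqrt{\pi}$ and the bound $\|\Delta w^{\varepsilon}\|_{L^2} = \mathrm{O}(\overline{K}_{\varepsilon}^{2})$, which matches the paper's~\eqref{Laplacian of difference}). Where you diverge is in the choice of test function. The paper takes a \emph{fixed smooth, $\varepsilon$-independent} cutoff $\chi_k^\ell$ equal to $1$ near $x^{(k)}$ and $0$ near the other windows, and uses only Green's \emph{first} identity, which requires nothing beyond $\partial_n w^{\varepsilon} \in L^2(\partial\widetilde{\Omega}_{\varepsilon})$ (Lemma~\ref{lem:reg_L2_normal_derivative}) and smoothness of $\chi_k^\ell$. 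The price is the extra term $\int \nabla w^{\varepsilon}\cdot\nabla\chi_k^\ell$, which forces the intermediate estimate $\|\nabla w^{\varepsilon}\|_{L^2} = \mathrm{O}(\overline{K}_{\varepsilon}^{3/2})$ (obtained from $\int|\nabla w^{\varepsilon}|^2 = \int (\Delta w^{\varepsilon})w^{\varepsilon}$) and is exactly where the exponent $3/2$ in the statement comes from. Your harmonic test function $v_k$ kills that term entirely, and if the argument is completed it yields the stronger bound $\mathrm{O}(\overline{K}_{\varepsilon}^{2})$ — consistent with the paper's own remark that~\eqref{normal derivatice on Gamma D} has a better error than the sum of the~\eqref{normal derivatice on Gamma Dk}.

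The one genuine gap you should close concerns the regularity of $v_k$ itself. You invoke Lemma~\ref{lem:reg_L2_normal_derivative} to get $\partial_n v_k \in L^2(\partial\widetilde{\Omega}_{\varepsilon})$, but that lemma applies only to elements of $\mathcal{D}(\widetilde{\mathcal{L}}^{\varepsilon})$, and $v_k$ is not one (its Dirichlet trace is $1$ on $\widetilde{\Gamma}^{\varepsilon}_{\mathbf{D}_k}$). You need to observe that near each Dirichlet--Neumann interface the Dirichlet datum of $v_k$ is locally constant, so that $v_k$ minus that constant satisfies locally the homogeneous mixed conditions and the corner regularity theory of~\cite{jakab2009regularity,gol2011hodge} (with the $\pi/2$ angle from~\eqref{eq:angle_intersection}) applies; only then is the pairing $\langle \partial_n v_k, w^{\varepsilon}\rangle$ an honest integral that splits into $\int_{\widetilde{\Gamma}^{\varepsilon}_{\mathbf{D}}} w^{\varepsilon}\,\partial_n v_k = 0$ and $\int_{\widetilde{\Gamma}^{\varepsilon}_{\mathbf{N}}} w^{\varepsilon}\,\partial_n v_k = 0$, which is what makes Green's second identity legitimate. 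Relatedly, the $L^\infty$ bound $0\le v_k\le 1$ is cleaner to obtain by truncation in the variational formulation (truncating at $0$ and $1$ decreases the Dirichlet energy while preserving the boundary constraint) than by a pointwise maximum principle, which is delicate at the corners of $\widetilde{\Omega}_{\varepsilon}$. With these two points addressed, your argument is complete and in fact sharper than the paper's.
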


\begin{proof}
Let us introduce ${v}^{\varepsilon}:=\widetilde{u}_{0}^{\varepsilon}-{\varphi}^{\varepsilon}$. We start by writing preliminary estimates on~$\Delta v^\varepsilon$ and~$\nabla v^\varepsilon$, and then relate with a Green formula the integral on~$\widetilde{\Gamma}^{\varepsilon}_{\mathbf{D}_{k}}$ in~\eqref{second estimate exit point} to integrals over~$\widetilde{\Omega}_{\varepsilon}$ involving~$\Delta v^\varepsilon$ and~$\nabla v^\varepsilon$.

We first claim that
\begin{align}\label{Laplacian of difference}
       \|\Delta{v}^{\varepsilon}\|_{L^{2}(\widetilde{\Omega}_{\varepsilon})}=\mathrm{O}\left(\overline{K}^{2}_\varepsilon\right).
    \end{align}
Indeed, in view of~\eqref{mixed} and Lemma~\ref{laplacian of the quasimode N hole}, 
   \begin{align*}
      \Delta{v}^{\varepsilon}&= -\widetilde{\lambda}_{0}^{\varepsilon}\widetilde{u}_{0}^{\varepsilon}- \frac{\overline{K}_\varepsilon}{\sqrt{\pi}} =\left(-\widetilde{\lambda}_{0}^{\varepsilon}+\overline{K}_\varepsilon\right)\widetilde{u}_{0}^{\varepsilon}+\overline{K}_\varepsilon\left({\varphi}^{\varepsilon}-\widetilde{u}_{0}^{\varepsilon}\right)-\overline{K}_\varepsilon\left(\frac{1}{\sqrt{\pi}}+{\varphi}^{\varepsilon}\right).
   \end{align*}
   Therefore, Theorem~\ref{asymlambda N hole},~\eqref{eq:estimL2}, and~\eqref{eq:bound_varphi_eps_minus_cst} imply that~\eqref{Laplacian of difference} holds.

We next prove that
    \begin{align}\label{gradient of the difference}
       \|\nabla{v}^{\varepsilon}\|_{L^{2}(\widetilde{\Omega}_{\varepsilon})}=\mathrm{O}\left(\overline{K}^{3/2}_\varepsilon\right).
    \end{align}
Since~${v}^{\varepsilon}=0$ on $\widetilde{\Gamma}^{\varepsilon}_{\mathbf{D}}$ and $\partial_n {v}^{\varepsilon}=0$ on $\widetilde{\Gamma}^{\varepsilon}_{\mathbf{N}}$, by integration by parts, 
\[
\int_{\widetilde{\Omega}_{\varepsilon}}|\nabla{v}^{\varepsilon}|^{2} = \int_{\widetilde{\Omega}_{\varepsilon}} \left(\Delta{v}^{\varepsilon}\right) {v}^{\varepsilon}.
\]
Now, using  a Cauchy--Schwarz inequality,~\eqref{eq:estimL2}, and~\eqref{Laplacian of difference}, we  obtain that, for $\varepsilon$ small enough,
enough,
\begin{align*}
\left |\int_{\widetilde{\Omega}_{\varepsilon}} \left(\Delta{v}^{\varepsilon}\right) {v}^{\varepsilon} \right|\leq   \|\Delta {v}^{\varepsilon}\|_{L^{2}(\widetilde{\Omega}_{\varepsilon})}\|{v}^{\varepsilon}\|_{L^{2}(\widetilde{\Omega}_{\varepsilon})}\leq C\overline{K}^{3}_\varepsilon,
\end{align*}
which yields~\eqref{gradient of the difference}. 

We now fix $k\in\{1,\dots,N\}$ and relate the integral on~$\widetilde{\Gamma}^{\varepsilon}_{\mathbf{D}_{k}}$ in~\eqref{second estimate exit point} to integrals over~$\widetilde{\Omega}_{\varepsilon}$ involving~$\Delta v^\varepsilon$ and~$\nabla v^\varepsilon$. We introduce to this end a smooth cut-off function~$\chi^{\ell}_{k}:{\Omega}\rightarrow[0,1]$, defined using some parameter~$0<\ell<1$, which satisfies
\[
\chi^{\ell}_{k}(x)= 1 \textrm{ on } {\mathbf{B}} \left(x^{(k)},\frac{\ell}{2}\right)\cap\Omega,
\qquad
\chi^{\ell}_{k}(x)= 0 \textrm{ on } \Omega\backslash{\mathbf{B}} \left(x^{(k)},{\ell}\right).
\]
The parameter~$\ell \in (0,\rho_0)$ is chosen such that, for any~$\varepsilon \in(0,\varepsilon_{0})$,
   \begin{align*}
\widetilde{\Gamma}^{\varepsilon}_{\mathbf{D}_{k}}\subset{\mathbf{B}} \left(x^{(k)},\frac{\ell}{2}\right)\cap\Omega, 
\qquad
\widetilde{\Gamma}^{\varepsilon}_{\mathbf{D}_{k'}} \cap \mathbf{B} \left(x^{(k)},\ell\right) = \emptyset \textrm{ for } k' \neq k.
   \end{align*}
  Since ${v}^{\varepsilon}\in  \mathcal{D}(\widetilde{\mathcal{L}}^{\varepsilon})$ (because $\widetilde{u}_{0}^{\varepsilon}$ and ${\varphi}^{\varepsilon}$ belong to~$\mathcal{D}(\widetilde{\mathcal{L}}^{\varepsilon})$, see Lemma \ref{laplacian of the quasimode N hole}), Lemma~\ref{lem:reg_L2_normal_derivative} ensures that~$\partial_{n}{v}^{\varepsilon}\in L^{2}(\partial\widetilde{\Omega}_{\varepsilon})$. Then, using an integration by parts, we have
   \begin{align}\label{integration formula 2}
    \int_{\widetilde{\Omega}_{\varepsilon}}\Delta{v}^{\varepsilon}  \chi^{\ell}_{k} = -  \int_{\widetilde{\Omega}_{\varepsilon}}\nabla{v}^{\varepsilon}  \cdot \nabla\chi^{\ell}_{k} + \int_{\widetilde{\Gamma}^{\varepsilon}_{\mathbf{D}_{k}}} \partial_{n}{v}^{\varepsilon}\chi^{\ell}_{k}\,\mathrm{d}\sigma,
   \end{align}
   where we used that  $\partial_{n}{v}^{\varepsilon}=0$ on $\widetilde{\Gamma}^{\varepsilon}_{\mathbf{N}}$ and $\chi^{\ell}_{k}=0$ on $\widetilde{\Gamma}^{\varepsilon}_{\mathbf{D}_{k'}}$ for $k'\ne k$.
   Using a Cauchy--Schwarz inequality and~\eqref{Laplacian of difference} we obtain that,  for $\varepsilon$ small enough,
\begin{align}\label{estimate 2}
\left |\int_{\widetilde{\Omega}_{\varepsilon}}\Delta{v}^{\varepsilon}  \chi^{\ell}_{k} \right|\leq   \|\Delta{v}^{\varepsilon}  \|_{L^{2}(\widetilde{\Omega}_{\varepsilon})}\left\|\chi^{\ell}_{k}\right\|_{L^{2}(\widetilde{\Omega}_{\varepsilon})}\leq C\overline{K}^{2}_\varepsilon.
\end{align}
Let us next deal with the first  term on the right hand side
of~\eqref{integration formula 2}.  A Cauchy--Schwarz inequality and~\eqref{gradient of the difference} imply that,  for $\varepsilon$ small
enough,
\begin{align}\label{estimate 3}
\left |\int_{\widetilde{\Omega}_{\varepsilon}}\nabla{v}^{\varepsilon} \cdot \nabla\chi^{\ell}_{k}\,\mathrm{d}x\right|\leq   \|\nabla{v}^{\varepsilon}  \|_{L^{2}(\widetilde{\Omega}_{\varepsilon})}\left\|\nabla\chi^{\ell}_{k}  \right\|_{L^{2}(\widetilde{\Omega}_{\varepsilon})}\leq C\overline{K}^{3/2}_\varepsilon.
\end{align}
Finally,  using~\eqref{integration formula 2}, \eqref{estimate 2} and~\eqref{estimate 3}, we obtain that, for $\varepsilon$ small enough,
\begin{align*}\int_{\widetilde{\Gamma}^{\varepsilon}_{\mathbf{D}_{k}}} \partial_{n}{v}^{\varepsilon}\chi^{\ell}_{k}\,\mathrm{d}\sigma=\int_{\widetilde{\Gamma}^{\varepsilon}_{\mathbf{D}_{k}}} \partial_{n}{v}^{\varepsilon}\,\mathrm{d}\sigma=\mathrm{O}\left(\overline{K}^{3/2}_\varepsilon\right),
\end{align*}
where  we used that $\chi^{\ell}_{k}=1$ on $\widetilde{\Gamma}^{\varepsilon}_{\mathbf{D}_{k}}$. This concludes the
proof of~\eqref{second estimate exit point}.
\end{proof}

The next two lemmas provide explicit computations on the exit flux for the quasimode $\varphi^\varepsilon$.

\begin{lem}\label{Lemma exit point 1}
Let $k\in\{1,\dots,N\}$. Then, for any~$\varepsilon\in(0,\varepsilon_0)$, 
\begin{align}\label{third estimate exit point}
\int_{\widetilde{\Gamma}^{\varepsilon}_{\mathbf{D}_{k}}}\partial_n {\varphi}^{\varepsilon}\,\mathrm{d}\sigma=-\int_{\Omega \cap \mathscr{C}\left(x^{(k)},r^{(k)}_{\varepsilon,+}\right)}\partial_n {\varphi}^{\varepsilon}\,\mathrm{d}\sigma+\mathrm{O}\left(\overline{K}^{2}_\varepsilon\right),
\end{align}
where ${\varphi}^{\varepsilon}$ is the quasi-mode defined in~\eqref{QuasiNhole}, the normal vector on the left-hand side of~\eqref{third estimate exit point} is the unit normal vector on~$\widetilde{\Gamma}^{\varepsilon}_{\mathbf{D}_{k}}$ which points outward $\widetilde{\Omega}_\varepsilon$, and the normal vector on the right-hand side corresponds to the outward unit normal to $\mathbf{B}\left(x^{(k)}, r^{(k)}_{\varepsilon,+}\right)$.
\end{lem}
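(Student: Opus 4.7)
The plan is to apply Green's formula on the ``small'' Lipschitz region
\[
W^\varepsilon_k := \widetilde{\Omega}_\varepsilon \cap \mathbf{B}\bigl(x^{(k)}, r^{(k)}_{\varepsilon,+}\bigr),
\]
which is tightly squeezed around the $k$-th exit, and then to notice that the bulk term $\int_{W^\varepsilon_k} \Delta \varphi^\varepsilon$ produced by this formula is negligible compared to $\overline{K}^{2}_\varepsilon$.

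First I would identify $\partial W^\varepsilon_k$ together with its outward normals. By Lemma~\ref{cercle 2}, the arc $\Omega \cap \mathscr{C}(x^{(k)}, r^{(k)}_{\varepsilon,+})$ lies entirely in $\{\varphi^\varepsilon<0\} \subset \widetilde{\Omega}_\varepsilon$, so it belongs to $\partial W^\varepsilon_k$, and its outward normal with respect to $W^\varepsilon_k$ coincides with the outward normal to $\mathbf{B}(x^{(k)}, r^{(k)}_{\varepsilon,+})$. By Lemmas~\ref{levelset} and~\ref{lem:widetilde_Gamma_D_well_def}, the zero level set of $\varphi^\varepsilon$ inside $\mathbf{B}(x^{(k)}, r^{(k)}_{\varepsilon,+}) \cap \Omega$ is precisely $\widetilde{\Gamma}^{\varepsilon}_{\mathbf{D}_k}$, and the outward normal to $W^\varepsilon_k$ along this curve agrees with that of $\widetilde{\Omega}_\varepsilon$. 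The remaining piece of $\partial W^\varepsilon_k$ is made of two short arcs of $\partial\Omega$ on either side of $x^{(k)}$ (the point $x^{(k)}$ itself is excluded from $\overline{W^\varepsilon_k}$ since $\varphi^\varepsilon\to+\infty$ there), so~\eqref{normal of f_k} applies on these arcs and yields $\partial_n \varphi^\varepsilon = 0$.

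Second, since $\varphi^\varepsilon \in C^\infty(\overline{W^\varepsilon_k})$ and $W^\varepsilon_k$ is Lipschitz, Green's formula gives
\begin{equation*}
\int_{W^\varepsilon_k} \Delta \varphi^\varepsilon \,\mathrm{d}x
= \int_{\widetilde{\Gamma}^{\varepsilon}_{\mathbf{D}_k}} \partial_n \varphi^\varepsilon \,\mathrm{d}\sigma
+ \int_{\Omega \cap \mathscr{C}(x^{(k)}, r^{(k)}_{\varepsilon,+})} \partial_n \varphi^\varepsilon \,\mathrm{d}\sigma,
\end{equation*}
with the normal conventions matching those of the statement. Rearranging yields the claimed identity up to the remainder $\int_{W^\varepsilon_k} \Delta \varphi^\varepsilon \,\mathrm{d}x$.

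Finally I would bound this remainder using Lemma~\ref{laplacian of the quasimode N hole}, which gives $\Delta \varphi^\varepsilon = \overline{K}_\varepsilon/\sqrt{\pi}$ in $\widetilde{\Omega}_\varepsilon$, together with the crude bound $|W^\varepsilon_k| \le \pi (r^{(k)}_{\varepsilon,+})^2 = \pi\, \mathrm{e}^{-2C_+/K^{(k)}_\varepsilon}$. This produces
\begin{equation*}
\left| \int_{W^\varepsilon_k} \Delta \varphi^\varepsilon \,\mathrm{d}x \right|
\le \sqrt{\pi}\,\overline{K}_\varepsilon\, \mathrm{e}^{-2C_+/K^{(k)}_\varepsilon}
= \mathrm{O}\bigl(\overline{K}^{2}_\varepsilon\bigr),
\end{equation*}
since the exponential decays faster than any polynomial in $K^{(k)}_\varepsilon \le \overline{K}_\varepsilon$ (in fact the estimate is far stronger than needed, which is why the $\mathrm{O}(\overline{K}^2_\varepsilon)$ remainder is rather comfortable). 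The only technical point to double-check is the Lipschitz regularity of $W^\varepsilon_k$: the smoothness of $\widetilde{\Gamma}^{\varepsilon}_{\mathbf{D}_k}$ follows from the strict monotonicity estimate for $t\mapsto\varphi^\varepsilon((1-t)x+tx^{(k)})$ established in the proof of Lemma~\ref{lem:widetilde_Gamma_D_well_def} (which actually yields $\nabla\varphi^\varepsilon \ne 0$ on $\widetilde{\Gamma}^{\varepsilon}_{\mathbf{D}_k}$), combined with the angle~$\pi/2$ at the intersections with $\partial\Omega$ from~\eqref{eq:angle_intersection} and the transversality of the circular arc with all other pieces of the boundary. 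This is really the only thing that could cause difficulty, and it is essentially immediate here.
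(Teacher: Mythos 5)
Your proof is correct and follows essentially the same route as the paper: the paper introduces exactly the region $\mathcal{S}^{\varepsilon}=\widetilde{\Omega}_{\varepsilon}\cap \mathbf{B}\bigl(x^{(k)},r^{(k)}_{\varepsilon,+}\bigr)$, applies Stokes' formula using $\partial_n\varphi^\varepsilon=0$ on $\widetilde{\Gamma}^{\varepsilon}_{\mathbf{N}}$, and bounds the bulk term by $\overline{K}_\varepsilon\sqrt{\pi}\,(r^{(k)}_{\varepsilon,+})^2=\mathrm{O}(\overline{K}^2_\varepsilon)$ exactly as you do. Your additional remarks on the identification of $\partial W^\varepsilon_k$ and its regularity only make explicit what the paper leaves implicit.
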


\begin{proof}
  Let $k\in\{1,\dots,N\}$.  Let us introduce (see Figure~\ref{figure for S})
  \begin{align}\label{Domaine Stokes}
    \mathcal{S}^{\varepsilon}:=\widetilde{\Omega}_{\varepsilon}\cap {\mathbf{B}} \left(x^{(k)},r^{(k)}_{\varepsilon,+}\right).
  \end{align}
  \begin{figure}[H]
    \begin{center}
\begin{tikzpicture}[scale=1.8]
  \draw[red] (2.6,-0.05) arc (88:264:0.5);
 \draw[blue](2.51,0.4) arc (90:263:0.95);
 \draw [<-] (1.55, -0.50)--(0.5, 0.3);
 \draw (0.4, 0.5) node[scale=0.9] {$\Omega \cap \mathscr{C}\left(x^{(k)},r^{(k)}_{\varepsilon,+}\right)$};
   \draw[black] (2.5,0.4) arc (15:6:3);
    \draw[black] (2.52,-1.05) arc (-10:-19:3);
         \draw (2.85,-0.6) node[scale=0.9] {$x^{(k)}$};
         \draw (1.8, -0.50) node[scale=0.9] {$\mathcal{S}^{\varepsilon}$};
   \draw (2.59, -0.56) node[scale=0.48] {$\bullet$};
\draw [<-] (2.1, -0.50)--(3.5, 0.4);
\draw (3.7, 0.5) node[scale=0.9] {$\widetilde{\Gamma}^{\varepsilon}_{\mathbf{D}_{k}}$};
\draw [<-] (2.45, -1.3)--(3, -1.5);
\draw (3.5, -1.5) node[scale=0.9] {$\widetilde{\Gamma}^{\varepsilon}_{\mathbf{N}} \subset \partial\Omega$};
\end{tikzpicture}
\caption{The domain $\mathcal{S}^{\varepsilon}$   in the neighborhood of $x^{(k)}$.}\label{figure for S}
\end{center}
\end{figure}
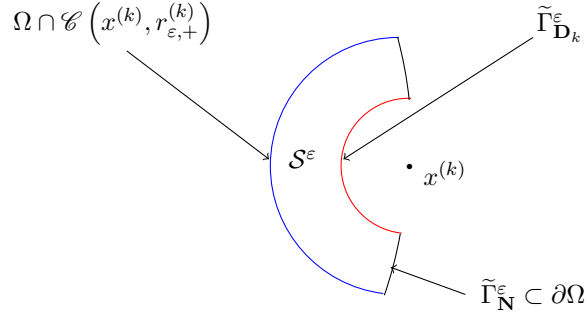
     Using  Stokes’ formula on~$\mathcal{S}^{\varepsilon}$ (which is licit since ${\varphi}^{\varepsilon}\in C^{\infty}(\widetilde{\Omega}_{\varepsilon})$),
          \begin{align} \label{integration formula 3}
          \begin{split}
\int_{\mathcal{S}^{\varepsilon}}\Delta{\varphi}^{\varepsilon}&= \int_{\partial\mathcal{S}^{\varepsilon}}\partial_n {\varphi}^{\varepsilon}\,\mathrm{d}\sigma = \int_{\widetilde{\Gamma}^{\varepsilon}_{\mathbf{D}_{k}}}\partial_n {\varphi}^{\varepsilon}\,\mathrm{d}\sigma+\int_{ \Omega \cap \mathscr{C}\left(x^{(k)},r^{(k)}_{\varepsilon,+}\right) } \partial_n {\varphi}^{\varepsilon}\,\mathrm{d}\sigma,
\end{split}
      \end{align}
   where we used that   $\partial_n {\varphi}^{\varepsilon}=0$ on $\widetilde{\Gamma}^{\varepsilon}_{\mathbf{N}}$. Moreover, using Lemma \ref{laplacian of the quasimode N hole},
   \begin{align}
       \left|\int_{\mathcal{S}^{\varepsilon}}\Delta{\varphi}^{\varepsilon}\right| =  \frac{\overline{K}_{\varepsilon}}{\sqrt{\pi}} \left|\mathcal{S}^{\varepsilon}\right| \leq \frac{\overline{K}_{\varepsilon}}{\sqrt{\pi}} \left|{\mathbf{B}} \left(x^{(k)},r^{(k)}_{\varepsilon,+}\right)\right| = \overline{K}_{\varepsilon}\sqrt{\pi}\left(r^{(k)}_{\varepsilon,+}\right)^2 = \mathrm{O}\left(\overline{K}^{2}_\varepsilon\right),
   \end{align}
which allows to conclude the proof of~\eqref{third estimate exit point}.
\end{proof}

The first term on the right hand side of~\eqref{third estimate exit point} is estimated in the following lemma.

\begin{lem}\label{Lemma exit point 2}
Let $k\in\{1,\dots,N\}$. Then, for any~$\varepsilon\in(0,\varepsilon_0)$, 
\begin{align}\label{fourth estimate exit point}
\int_{ \Omega \cap \mathscr{C}\left(x^{(k)},r^{(k)}_{\varepsilon,+}\right)}\partial_n {\varphi}^{\varepsilon}\,\mathrm{d}\sigma=-\sqrt{\pi}{K}^{(k)}_\varepsilon+\mathrm{O}\left(\overline{K}^{2}_\varepsilon\right),
    \end{align}
    where ${\varphi}^{\varepsilon}$ is the quasi-mode defined in~\eqref{QuasiNhole}. 
\end{lem}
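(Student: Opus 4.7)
The plan is a direct explicit computation, exploiting the closed-form expression~\eqref{QuasiNhole} and the formula~\eqref{grad f_k} for $\nabla f_j$. Writing $\nu(x) = (x-x^{(k)})/|x-x^{(k)}|$ for the outward unit normal to $\mathbf{B}(x^{(k)}, r^{(k)}_{\varepsilon,+})$, one has
\[
\int_{\Omega \cap \mathscr{C}(x^{(k)}, r^{(k)}_{\varepsilon,+})} \partial_n \varphi^\varepsilon \, \mathrm{d}\sigma = -\frac{1}{\sqrt{\pi}} \sum_{j=1}^N K_\varepsilon^{(j)} \int_{\Omega \cap \mathscr{C}(x^{(k)}, r^{(k)}_{\varepsilon,+})} \nabla f_j \cdot \nu \, \mathrm{d}\sigma,
\]
and I would split the sum into the singular term ($j=k$) and the regular terms ($j \neq k$).

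For $j \neq k$, \eqref{grad f_k} gives $|\nabla f_j(x)| \leq 1/|x-x^{(j)}| + |x|/2$. On $\mathscr{C}(x^{(k)}, r^{(k)}_{\varepsilon,+})$ one has $|x-x^{(j)}| \geq \rho_0 - r^{(k)}_{\varepsilon,+} \geq \rho_0/2$ (for $\varepsilon_0$ possibly further reduced), hence $|\nabla f_j|$ is bounded uniformly in $x$ and $\varepsilon$. Combined with the trivial arclength bound $|\Omega \cap \mathscr{C}(x^{(k)}, r^{(k)}_{\varepsilon,+})| \leq 2\pi r^{(k)}_{\varepsilon,+}$, these terms contribute at most $C \overline{K}_\varepsilon r^{(k)}_{\varepsilon,+}$, which is $\mathrm{O}(\overline{K}_\varepsilon^2)$ since $r^{(k)}_{\varepsilon,+} = \mathrm{e}^{-C_+/K_\varepsilon^{(k)}}$ is exponentially small in $1/K_\varepsilon^{(k)}$.

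For $j = k$, on the circle the radial formula reads
\[
\nabla f_k(x) \cdot \nu(x) = \frac{1}{r^{(k)}_{\varepsilon,+}} - \frac{x \cdot (x-x^{(k)})}{2\, r^{(k)}_{\varepsilon,+}}.
\]
The second summand is bounded in absolute value by $(1+r^{(k)}_{\varepsilon,+})/2$, so its integral is $\mathrm{O}(r^{(k)}_{\varepsilon,+})$. The first summand yields $|\Omega \cap \mathscr{C}(x^{(k)}, r^{(k)}_{\varepsilon,+})|/r^{(k)}_{\varepsilon,+}$, and by a short elementary computation (WLOG placing $x^{(k)} = (1,0)$ and parameterizing the circle as $(1+r\cos\theta, r\sin\theta)$, the condition $|x|<1$ becomes $\cos\theta < -r/2$, giving an arc of angular extent $2\arccos(r/2) = \pi - r + \mathrm{O}(r^3)$), one obtains $|\Omega \cap \mathscr{C}(x^{(k)}, r)|/r = \pi + \mathrm{O}(r)$. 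Hence the $j=k$ contribution is $\pi + \mathrm{O}(r^{(k)}_{\varepsilon,+})$.

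Assembling the two parts yields
\[
\int_{\Omega \cap \mathscr{C}(x^{(k)}, r^{(k)}_{\varepsilon,+})} \partial_n \varphi^\varepsilon \, \mathrm{d}\sigma = -\frac{K_\varepsilon^{(k)}}{\sqrt{\pi}} \pi + \mathrm{O}\!\left(\overline{K}_\varepsilon \, r^{(k)}_{\varepsilon,+}\right) = -\sqrt{\pi}\, K_\varepsilon^{(k)} + \mathrm{O}(\overline{K}_\varepsilon^2),
\]
using once more that $r^{(k)}_{\varepsilon,+}$ is exponentially small in $1/K_\varepsilon^{(k)}$ (and in particular much smaller than $\overline{K}_\varepsilon$). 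There is no real obstacle in this argument; the only slightly delicate point is the arclength expansion, which is purely a two-dimensional geometric computation that is essentially immediate because the unit disk is $C^\infty$ at $x^{(k)}$.
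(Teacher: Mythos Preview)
Your proof is correct and follows essentially the same approach as the paper: both split the sum over $j$ into the singular term $j=k$ and the regular terms $j\neq k$, bound the latter via $|\nabla f_j|\le 1/|x-x^{(j)}|+|x|/2$ together with the arclength estimate $|\Omega\cap\mathscr{C}(x^{(k)},r^{(k)}_{\varepsilon,+})|=\mathrm{O}(r^{(k)}_{\varepsilon,+})$, and handle the former by separating the $1/r^{(k)}_{\varepsilon,+}$ contribution (yielding the angular extent $\pi+\mathrm{O}(r^{(k)}_{\varepsilon,+})$) from the bounded $x/2$ contribution. The only cosmetic difference is that the paper records the angular extent as $\pi-\theta_\varepsilon$ with $\theta_\varepsilon=\arcsin(r^{(k)}_{\varepsilon,+}/2)$, whereas you expand $2\arccos(r^{(k)}_{\varepsilon,+}/2)$ directly; both give the same leading term and error.
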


\begin{proof}
Let us introduce the short-hand notation $C^{(k)}_\varepsilon = \Omega \cap \mathscr{C}\left(x^{(k)},r^{(k)}_{\varepsilon,+}\right)$. From the definition~\eqref{QuasiNhole}, we have 
\begin{align}\label{estimate 7}
\begin{split}
   \int_{C^{(k)}_\varepsilon}\partial_n {\varphi}^{\varepsilon}\,\mathrm{d}\sigma =  -\frac{K^{(k)}_{\varepsilon}}{\sqrt{\pi}}\int_{C^{(k)}_\varepsilon}\partial_n f_{k}\,\mathrm{d}\sigma -\frac{1}{\sqrt{\pi}}\sum_{\substack{k'=1 \\ k'\neq k}}^{N} K^{(k')}_{\varepsilon}\int_{C^{(k)}_\varepsilon}\partial_n f_{k'}\,\mathrm{d}\sigma.
   \end{split}
\end{align}
In view of~\eqref{grad f_k}, the first integral on the right hand side of~\eqref{estimate 7} reads 
\begin{equation}\label{estimate 10}
\begin{aligned}
  -\frac{K^{(k)}_{\varepsilon}}{\sqrt{\pi}}\int_{C^{(k)}_\varepsilon}\partial_n
  f_{k}
  \,\mathrm{d}\sigma&=-\frac{K^{(k)}_{\varepsilon}}{\sqrt{\pi}}\int_{C^{(k)}_\varepsilon}\frac{x-x^{(k)}}{|x-x^{(k)}|^{2}}\cdot\vec{n}(x)\,\sigma(\mathrm{d}x)\\
& \quad +\frac{K^{(k)}_{\varepsilon}}{\sqrt{\pi}}\int_{C^{(k)}_\varepsilon}\frac{x}{2}\cdot\vec{n}(x)\,\sigma(\mathrm{d}x).
 \end{aligned}
\end{equation}
As~$\vec{n}(x)=\frac{x-x^{(k)}}{|x-x^{(k)}|}$ on~$\mathscr{C}\left(x^{(k)},r^{(k)}_{\varepsilon,+}\right)$ (see Figure \ref{figure for S}), 
\begin{align*}
   \int_{C^{(k)}_\varepsilon}\frac{x-x^{(k)}}{|x-x^{(k)}|^{2}}\cdot\vec{n}(x)\,\sigma(\mathrm{d}x) = \int_{C^{(k)}_\varepsilon}\frac{1}{|x-x^{(k)}|}\,\sigma(\mathrm{d}x)=\pi-\theta_{\varepsilon},
\end{align*}
where $\theta_{\varepsilon}= \mathrm{arcsin}(r^{(k)}_{\varepsilon,+}/2)$. 
Concerning the second integral on the right hand side of~\eqref{estimate 10}, using that $|x|\leq 1$ for $x \in C^{(k)}_\varepsilon$, we obtain  
\begin{align*}
    \left|\int_{C^{(k)}_\varepsilon}\frac{x}{2}\cdot\vec{n}(x)\,\sigma(\mathrm{d}x)\right|\leq \frac12 \left|C^{(k)}_\varepsilon\right| = \frac12 r^{(k)}_{\varepsilon,+}(\pi-\theta_{\varepsilon}).
\end{align*}
Then, for $\varepsilon$ small enough,
\begin{align}\label{estimate 6}
   -\frac{K^{(k)}_{\varepsilon}}{\sqrt{\pi}}\int_{C^{(k)}_\varepsilon}\partial_n f_{k}\,\mathrm{d}\sigma=-{K^{(k)}_{\varepsilon}}{\sqrt{\pi}}+\mathrm{O}\left(\overline{K}^{2}_{\varepsilon}\right).
\end{align}

Let us now deal with  the second integral on the right hand side of~\eqref{estimate 7}. Note that, in view of~\eqref{rho0}, there exists $\rho>0$ independent of~$\varepsilon$ such that, for $\varepsilon$ small enough,  
\begin{align}\label{Distance 1}
    \forall x\in C^{(k)}_\varepsilon, \quad \forall k'\ne k, \qquad \left|x-x^{(k')}\right|\geq \rho.
\end{align}
Using again~\eqref{grad f_k},
\begin{align*}
   &\int_{C^{(k)}_\varepsilon}\partial_n f_{k'}\,\mathrm{d}\sigma = \int_{C^{(k)}_\varepsilon}\frac{x-x^{(k')}}{|x-x^{(k')}|^{2}}\cdot\frac{x-x^{(k)}}{|x-x^{(k)}|}\,\sigma(\mathrm{d}x) -\int_{C^{(k)}_\varepsilon}\frac{x}{2}\cdot\frac{x-x^{(k)}}{|x-x^{(k)}|}\,\sigma(\mathrm{d}x).
\end{align*}
With~\eqref{Distance 1} and since $|x|\leq 1$ on $C^{(k)}_\varepsilon$, we obtain that
\[
\left|\int_{C^{(k)}_\varepsilon}\partial_n f_{k'}\,\mathrm{d}\sigma\right|\leq
\left|C^{(k)}_\varepsilon\right| \left(\frac1\rho + \frac12 \right).
\]
Therefore, 
\begin{align}\label{estimate 5}
& \left| -\frac{1}{\sqrt{\pi}}\sum_{\substack{k'=1 \\ k'\neq k}}^{N} K^{(k')}_{\varepsilon}\int_{C^{(k)}_\varepsilon}\partial_n f_{k'}\,\mathrm{d}\sigma \right|\leq \frac{1}{\sqrt{\pi}}\left(\frac1\rho + \frac12 \right) \left(\sum_{\substack{k'=1 \\ k'\neq k}}^{N} K^{(k')}_{\varepsilon}\right)r^{(k)}_{\varepsilon,+}(\pi-\theta_{\varepsilon})=\mathrm{O}\left(\overline{K}^{2}_{\varepsilon}\right).
\end{align}
Gathering the latter inequality with~\eqref{estimate 7} and~\eqref{estimate 6} finally gives the desired result. 
\end{proof}

We are now in position to prove Theorem \ref{exit point}.

\begin{proof}[Proof of Theorem \ref{exit point}]
Note first that the estimate~\eqref{normal derivatice on Gamma D} is given by Lemma~\ref{integral of normalderivative on GammaD}. Fix next~$k\in\{1,\dots,N\}$. Using Lemmas \ref{Lemma exit point 1} and \ref{Lemma exit point 2}, we have, for $\varepsilon$ small enough,
\begin{align}\label{estimate 8}
\int_{\widetilde{\Gamma}^{\varepsilon}_{\mathbf{D}_{k}}}\partial_n {\varphi}^{\varepsilon}\,\mathrm{d}\sigma=\sqrt{\pi}{K}^{(k)}_\varepsilon+\mathrm{O}\left(\overline{K}^{2}_\varepsilon\right).
\end{align}
Lemma \ref{exit point Lemma 3} and~\eqref{estimate 8} then imply that, for $\varepsilon$ small enough,
\begin{align}\label{estimate 9}
\begin{split}
\int_{\widetilde{\Gamma}^{\varepsilon}_{\mathbf{D}_{k}}}\partial_n \widetilde{u}_{0}^{\varepsilon}\,\mathrm{d}\sigma&= \int_{\widetilde{\Gamma}^{\varepsilon}_{\mathbf{D}_{k}}}\partial_n {\varphi}^{\varepsilon}\,\mathrm{d}\sigma+\int_{\widetilde{\Gamma}^{\varepsilon}_{\mathbf{D}_{k}}} \partial_n(\widetilde{u}_{0}^{\varepsilon} - {\varphi}^{\varepsilon})\,\mathrm{d}\sigma\\
&=\sqrt{\pi}{K}^{(k)}_\varepsilon+\mathrm{O}\left(\overline{K}^{3/2}_\varepsilon\right),
\end{split}
\end{align}
which is~\eqref{normal derivatice on Gamma Dk}. Moreover, using~\eqref{law of X_tildetau}, \eqref{normal derivatice on Gamma D}, and \eqref{normal derivatice on Gamma Dk}, one obtains that, for $\varepsilon$ small enough, 
\begin{align*} \mathbb{P}_{\widetilde{\nu}_{0}^{\varepsilon}}\left[X_{\widetilde{\tau_{\varepsilon}}}\in\widetilde\Gamma^{\varepsilon}_{\mathbf{D}_{k}}\right]=\frac{\displaystyle\int_{\widetilde{\Gamma}^{\varepsilon}_{\mathbf{D}_{k}}}\partial_n\widetilde{u}_{0}^{\varepsilon}\,\mathrm{d}\sigma}{\displaystyle\int_{\widetilde{\Gamma}^{\varepsilon}_{\mathbf{D}}}\partial_n \widetilde{u}_{0}^{\varepsilon}\,\mathrm{d}\sigma}=\frac{{K}^{(k)}_\varepsilon}{\overline{K}_\varepsilon}+\mathrm{O}\left(\sqrt{\overline{K}_\varepsilon}\right),
\end{align*}
which concludes the proof of Theorem \ref{exit point}.
\end{proof}

\appendix
\section{The mixed Laplacian on \texorpdfstring{$p$}{}-forms}\label{Laplacian p forms}

In this appendix, we provide additional results on the spectrum of the mixed Laplacian on $p$-forms, to gain intuition on the problem. Notice that the results of Theorem~\ref{Oneholepropertie}(iii) are not used in the main body of this article, but provide a natural extension of Theorem~\ref{One eigenvalue results N holes}. A spectral analysis of the mixed Laplacian on $p$-forms is provided in Section~\ref{Spectral analysis of the p forms}, and the results are illustrated by numerical simulations in Section~\ref{Numerical illustration of Theo A.3}. 

\subsection{Spectral analysis of the Laplacian on \texorpdfstring{$p$}{}-forms}\label{Spectral analysis of the p forms}

In this section, we provide a  spectral analysis of the  Laplacian on $p$-forms  with mixed tangential-normal boundary conditions. For simplicity and having in mind the numerical illustrations presented in Section~\ref{Numerical illustration of Theo A.3}, we present the results for the Laplacian  on~$\Omega$ with tangential (resp. normal) boundary conditions on $\Gamma^{\varepsilon}_{\mathbf{D}}$ (resp. $\Gamma^{\varepsilon}_{\mathbf{N}}$). Let us emphasize that the  results presented here also hold with $\Omega=\widetilde{\Omega}_{\varepsilon}$, $\Gamma^{\varepsilon}_{\mathbf{D}}=\widetilde \Gamma^{\varepsilon}_{\mathbf{D}}$ and $\Gamma^{\varepsilon}_{\mathbf{N}}=\widetilde \Gamma^{\varepsilon}_{\mathbf{N}}$ as introduced around~\eqref{The regularity domain} (see~\cite{gol2011hodge} for a general setting for these results, on Riemannian Lispchitz manifolds).

We first need to introduce some definitions and notation from Riemannian geometry (we refer to classical textbooks on Riemannian geometry such as~\cite{gallot2004curvature} for more details, see also e.g.~\cite[Appendix]{LelievreNier}).
For $p\in\left\{0,1,2\right\}$, one denotes by $\Lambda^{p} C^{\infty}(\overline{\Omega})$ (respectively $\Lambda^{p} C_{\mathrm{c}}^{\infty}(\Omega)$) the space of $p$-forms which are $C^{\infty}$
 on $\Omega$ (respectively $C^{\infty}$
 with compact support in $\Omega$). Let us moreover introduce the space
\begin{align}
  \Lambda^{p} C_{0}^{\infty}(\Omega)=  \left\{w\in \Lambda^{p} C^{\infty}(\overline{\Omega}) \ \middle | \ \textbf{t}w_{|_{\Gamma^{\varepsilon}_{\mathbf{D}}}}=0\hspace{0.2cm}\text{and}\hspace{0.2cm}\textbf{n}w_{|_{\Gamma^{\varepsilon}_{\mathbf{N}}}}=0\right\},\end{align}
where $\textbf{t}$ denotes the tangential trace and $\textbf{n}$ the normal trace on forms. One denotes by~$\Lambda^{p}H^{m}(\Omega)$ the  Sobolev spaces of $p$-forms with regularity index $m$ on $\Omega$,  the space $\Lambda^{p}H^{0}(\Omega)$ being also denoted by $\Lambda^{p}L^{2}(\Omega)$. We denote by
\[
\Lambda H^{m}(\Omega) = \bigoplus_{p=0}^d \Lambda^{p}H^{m}(\Omega),
\qquad
\Lambda C^{\infty}(\Omega) = \bigoplus_{p=0}^d \Lambda^{p} C^{\infty}(\Omega),
\]
and by $d$ the differential on $\Lambda C^{\infty}(\Omega)$:
\begin{align}
d^{(p)}:\Lambda^{p}C^{\infty}(\Omega)\to \Lambda^{p+1}C^{\infty}(\Omega).
\end{align}
Moreover, $d^{\star}$ denotes  its formal adjoint with respect to the $L^{2}$-scalar product inherited from the Riemannian
structure: 
\begin{align}
d^{(p),\star}:\Lambda^{p+1}C^{\infty}(\Omega)\to \Lambda^{p}C^{\infty}(\Omega).
\end{align}
The following proposition is taken from~\cite[Proposition 4.4]{gol2011hodge}.

\begin{prop}\label{d}
The unbounded operators $d^{(p)}$ and $d^{(p),\star}$   defined on  $\Lambda^{p}L^{2}(\Omega)$  with domains
\begin{align}
 \mathcal{D}\left(d^{(p)}\right)=  \left\{w\in \Lambda^{p} L^{2}({\Omega}) \ \middle| \ d^{(p)}w\in \Lambda^{p+1} L^{2}({\Omega}), \, \mathbf{t}w_{|_{\Gamma^{\varepsilon}_{\mathbf{D}}}}=0\right\}    
\end{align}
and 
\begin{align}
\mathcal{D}\left(d^{(p),\star}\right)=  \left\{w\in \Lambda^{p} L^{2}({\Omega}) \ \middle| \ d^{(p),\star}w\in \Lambda^{p-1} L^{2}({\Omega}), \, \mathbf{{n}}w_{|_{\Gamma^{\varepsilon}_{\mathbf{N}}}}=0\right\}, 
\end{align}
are closed, densely defined, and adjoints of each other in $\Lambda^{p}L^{2}(\Omega)$.
\end{prop}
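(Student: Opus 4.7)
The plan is to verify the three assertions—dense definition, closedness, and mutual adjointness—in turn, the last being the main content. Dense definition is immediate: $\Lambda^{p} C_{\mathrm{c}}^{\infty}(\Omega)$ is contained in both $\mathcal{D}(d^{(p)})$ and $\mathcal{D}(d^{(p),\star})$ (the trace conditions are vacuous for compactly supported forms), and it is dense in $\Lambda^{p} L^{2}(\Omega)$.

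For closedness of $d^{(p)}$, I would argue as follows: if a sequence $w_n \in \mathcal{D}(d^{(p)})$ satisfies $w_n \to w$ in $\Lambda^{p} L^{2}(\Omega)$ and $d^{(p)} w_n \to v$ in $\Lambda^{p+1} L^{2}(\Omega)$, then continuity of distributional differentiation gives $d^{(p)} w = v$ as distributions, hence in $\Lambda^{p+1} L^{2}(\Omega)$. The tangential trace condition passes to the limit thanks to the continuity of the map $w \mapsto \mathbf{t} w_{|\partial\Omega}$ on the graph space $\{w \in \Lambda^{p} L^{2}(\Omega), \, d^{(p)}w \in \Lambda^{p+1} L^{2}(\Omega)\}$, equipped with its natural norm, taking values in a suitable negative-index Sobolev space of forms on~$\partial\Omega$. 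An identical argument using the normal trace handles $d^{(p),\star}$.

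The adjointness claim $(d^{(p)})^{*} = d^{(p),\star}$ splits into two inclusions. For $d^{(p),\star} \subset (d^{(p)})^{*}$, I would start from Green's identity for smooth forms $u \in \Lambda^{p} C^{\infty}(\overline{\Omega})$ and $v \in \Lambda^{p+1} C^{\infty}(\overline{\Omega})$:
\[
\langle d^{(p)} u, v\rangle_{L^{2}} - \langle u, d^{(p),\star} v\rangle_{L^{2}} = \int_{\partial\Omega} \mathbf{t} u \wedge \star \, \mathbf{n} v.
\]
When $\mathbf{t}u_{|\Gamma^{\varepsilon}_{\mathbf{D}}}=0$ and $\mathbf{n}v_{|\Gamma^{\varepsilon}_{\mathbf{N}}}=0$, the boundary term vanishes since $\partial \Omega = \overline{\Gamma^{\varepsilon}_{\mathbf{D}}} \cup \overline{\Gamma^{\varepsilon}_{\mathbf{N}}}$, and extending by density in the graph norms yields the inclusion. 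Conversely, for $v \in \mathcal{D}((d^{(p)})^{*})$, testing the adjointness identity against $u \in \Lambda^{p} C_{\mathrm{c}}^{\infty}(\Omega)$ gives $d^{(p),\star} v = (d^{(p)})^{*} v$ in the distributional sense, so $d^{(p),\star} v \in \Lambda^{p} L^{2}(\Omega)$; then testing against $u \in \Lambda^{p} C^{\infty}(\overline{\Omega})$ with $\mathbf{t}u_{|\Gamma^{\varepsilon}_{\mathbf{D}}}=0$ and $\mathbf{t}u_{|\Gamma^{\varepsilon}_{\mathbf{N}}}$ otherwise arbitrary forces $\int_{\Gamma^{\varepsilon}_{\mathbf{N}}} \mathbf{t} u \wedge \star \, \mathbf{n} v = 0$, whence $\mathbf{n} v_{|\Gamma^{\varepsilon}_{\mathbf{N}}} = 0$ by a density argument on~$\Gamma^{\varepsilon}_{\mathbf{N}}$.

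The hard part will be the two density statements used above, namely the density of smooth forms with prescribed vanishing of the tangential (resp.\ normal) trace on a prescribed portion of~$\partial\Omega$, for the graph-norm topology. This is delicate because elements of the graph space have only $L^{2}$ regularity with $L^{2}$ exterior derivative, and because the interface between $\Gamma^{\varepsilon}_{\mathbf{D}}$ and $\Gamma^{\varepsilon}_{\mathbf{N}}$ introduces singular behavior in the traces. The appropriate functional analytic framework on Lipschitz (or $C^{1,1}$) domains is developed in~\cite{gol2011hodge}, where Proposition~4.4 yields the full statement directly, which is why the authors simply invoke it here.
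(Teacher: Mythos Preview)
The paper does not prove this proposition at all: it simply states that the result is taken from~\cite[Proposition~4.4]{gol2011hodge}. Your sketch is a correct outline of the standard argument (dense definition via compactly supported smooth forms, closedness via the graph topology and continuity of the partial traces, adjointness via Green's formula plus the crucial density of smooth forms with the prescribed mixed trace conditions in the graph norm), and you correctly identify both the genuinely delicate step and the fact that the authors rely on the cited reference rather than reproving it.
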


On can check that (see \cite[Equation (130)]{GesGiaLeliPeutrec})
\begin{align}\label{propriété de diff et coddiff}
 \left\{
    \begin{array}{ll}
 \overline{\text{Im}\hspace{0.1cm}d}\subset \text{Ker}\hspace{0.1cm}d\,\,\,\text{and}\,\,\, d\circ d=0,\\
  \overline{\text{Im}\hspace{0.1cm}d^{\star}}\subset \text{Ker}\hspace{0.1cm}d^{\star}\,\,\,\text{and}\,\,\,d^{\star}\circ d^{\star}=0.
     \end{array}
     \right.
    \end{align}
We are now in position to define the  Laplacians on $p$-forms with mixed tangential-normal boundary conditions on $\partial\Omega$ (see also \cite[p. 89]{GesGiaLeliPeutrec}). The following result is a consequence of~\cite[Theorem 4.5]{gol2011hodge}.

\begin{prop}
Define on $\Lambda^{p}L^{2}(\Omega)$ the Laplacian on $p$-forms
\begin{align}\label{p laplacian}
    \Delta^{(p)}= d^{(p-1)}\circ d^{(p),\star}+d^{(p+1),\star}\circ d^{(p)},
\end{align}
with domain
\begin{align}\label{domain p laplacian}
\begin{split}
   \mathcal{D}\left(\Delta^{(p)}\right)=\biggl\{&w\in \Lambda^{p} L^{2}({\Omega})\ \Big|\,\, dw, \, d^{\star}w,\,  dd^{\star}w,\,  d^{\star}dw \in \Lambda L^{2}({\Omega}),\\ &\qquad \mathbf{t}w_{|_{\Gamma^{\varepsilon}_{\mathbf{D}}}}=0,\,  \mathbf{n}w_{|_{\Gamma^{\varepsilon}_{\mathbf{N}}}}=0,\,  \mathbf{t}d^{\star}w_{|_{\Gamma^{\varepsilon}_{\mathbf{D}}}}=0,\,  \mathbf{n}dw_{|_{\Gamma^{\varepsilon}_{\mathbf{N}}}}=0\biggr\}.
    \end{split}
\end{align}
This operator is nonnegative selfadjoint in $\Lambda^{p}L^{2}(\Omega)$.
In addition, the domain $\mathcal{D}(Q^{(p)})$
of the closed quadratic form $Q^{(p)}$
 associated with
$\Delta^{(p)}$ is given by 
\begin{align}\label{domain form}
\begin{split}
    \mathcal{D}\left(Q^{(p)}\right)&= \mathcal{D}\left(d^{(p)}\right)\cap\mathcal{D}\left(d^{(p),\star}\right)\\
    &=\biggl\{w\in \Lambda^{p} L^{2}({\Omega})\Big|\,\, dw, \, d^{\star}w \in \Lambda L^{2}({\Omega}),\, \mathbf{t}w_{|_{\Gamma^{\varepsilon}_{\mathbf{D}}}}=0,\,  \mathbf{n}w_{|_{\Gamma^{\varepsilon}_{\mathbf{N}}}}=0\biggr\},
    \end{split}
\end{align} 
and, for $w\in\mathcal{D}(Q^{(p)})$,
\begin{align}
    Q^{(p)}(w)= \|dw\|^{2}_{L^2(\Omega)}+\|d^{\star}w\|^{2}_{L^2(\Omega)}.
\end{align}
\end{prop}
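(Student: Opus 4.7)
The plan is to proceed via the first representation theorem for closed nonnegative sesquilinear forms. I would start by defining
$$Q^{(p)}(w) = \|dw\|_{L^{2}(\Omega)}^{2} + \|d^{\star}w\|_{L^{2}(\Omega)}^{2}$$
on the natural form domain $\mathcal{D}(Q^{(p)}) := \mathcal{D}(d^{(p)}) \cap \mathcal{D}(d^{(p),\star})$, with $d^{(p)}$ and $d^{(p),\star}$ defined in Proposition~\ref{d}. The quadratic form is manifestly nonnegative. Its closedness reduces to showing that $\mathcal{D}(Q^{(p)})$ is complete for the form norm $\|\cdot\|_{L^{2}}^{2} + Q^{(p)}(\cdot)$: given a sequence $(w_{n})$ which is Cauchy for this norm, its $L^{2}$-limit $w$ lies in both $\mathcal{D}(d^{(p)})$ and $\mathcal{D}(d^{(p),\star})$, because the graphs of $d^{(p)}$ and $d^{(p),\star}$ are closed in $\Lambda^{p} L^{2}(\Omega) \times \Lambda L^{2}(\Omega)$ by Proposition~\ref{d}, and the boundary conditions encoded in these domains are preserved in the limit.

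Once $Q^{(p)}$ is closed and nonnegative, the representation theorem (cf. Kato, Theorem~VI.2.1) produces a unique nonnegative selfadjoint operator $T$ on $\Lambda^{p} L^{2}(\Omega)$ with form domain $\mathcal{D}(Q^{(p)})$ and satisfying
$$\langle T w, v\rangle_{L^{2}} = \langle dw, dv\rangle_{L^{2}} + \langle d^{\star}w, d^{\star}v\rangle_{L^{2}}$$
for every $w \in \mathcal{D}(T)$ and every $v \in \mathcal{D}(Q^{(p)})$. The identification of $T$ with $\Delta^{(p)}$ is then carried out in two steps. Testing first against $v \in \Lambda^{p} C_{\mathrm{c}}^{\infty}(\Omega) \subset \mathcal{D}(Q^{(p)})$, the formal adjointness of $d^{(p)}$ and $d^{(p),\star}$ on compactly supported forms shows that $T w = d^{(p-1)} d^{(p),\star} w + d^{(p+1),\star} d^{(p)} w$ in the sense of distributions on~$\Omega$. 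Decomposing this equality according to the differential bidegree gives that $d^{(p-1)} d^{(p),\star} w$ and $d^{(p+1),\star} d^{(p)} w$ individually lie in $\Lambda L^{2}(\Omega)$, which already provides the interior regularity listed in~\eqref{domain p laplacian}.

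To conclude, the remaining boundary conditions $\mathbf{t} d^{\star} w_{|_{\Gamma^{\varepsilon}_{\mathbf{D}}}} = 0$ and $\mathbf{n} dw_{|_{\Gamma^{\varepsilon}_{\mathbf{N}}}} = 0$ must be extracted by testing against $v \in \mathcal{D}(Q^{(p)})$ with nontrivial boundary behavior. Using the Green-type identity for $d$ and $d^{\star}$ established in the Lipschitz Hodge framework of~\cite{gol2011hodge}, one can write
$$\langle d d^{\star} w + d^{\star} d w, v\rangle_{L^{2}} = \langle dw, dv\rangle_{L^{2}} + \langle d^{\star} w, d^{\star} v\rangle_{L^{2}} - \langle \mathbf{t} d^{\star} w, \mathbf{n} v\rangle_{\Gamma^{\varepsilon}_{\mathbf{D}}} + \langle \mathbf{n} dw, \mathbf{t} v\rangle_{\Gamma^{\varepsilon}_{\mathbf{N}}},$$
where the boundary pairings are to be read in appropriate dual spaces and where the other potential boundary terms vanish thanks to the constraints $\mathbf{t} v_{|_{\Gamma^{\varepsilon}_{\mathbf{D}}}} = 0$ and $\mathbf{n} v_{|_{\Gamma^{\varepsilon}_{\mathbf{N}}}} = 0$ encoded in $\mathcal{D}(Q^{(p)})$. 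The arbitrariness of $\mathbf{n} v$ on $\Gamma^{\varepsilon}_{\mathbf{D}}$ and of $\mathbf{t} v$ on $\Gamma^{\varepsilon}_{\mathbf{N}}$ among such test forms forces the two remaining traces of $w$ to vanish, so that $\mathcal{D}(T) \subset \mathcal{D}(\Delta^{(p)})$; the reverse inclusion follows by reading the same Green identity in the opposite direction, and the two operators act identically. The main obstacle will be the rigorous interpretation of the traces $\mathbf{t} d^{\star} w$ and $\mathbf{n} dw$ at the corners where $\Gamma^{\varepsilon}_{\mathbf{D}}$ meets $\Gamma^{\varepsilon}_{\mathbf{N}}$, since elements of $\mathcal{D}(Q^{(p)})$ do not in general belong to $\Lambda^{p} H^{1}(\Omega)$; this is exactly what the trace and integration-by-parts theory in~\cite{gol2011hodge} is designed to handle, and the statement then follows as a special case of~\cite[Theorem~4.5]{gol2011hodge}.
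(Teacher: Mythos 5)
The paper gives no proof of this proposition at all: it simply records it as a consequence of~\cite[Theorem 4.5]{gol2011hodge}. Your proposal is the standard unpacking of that citation (closedness of the form from the closedness of the operators in Proposition~\ref{d}, Kato's representation theorem, identification of the resulting selfadjoint operator via Green identities), and you yourself defer the genuinely delicate points --- the meaning of the traces $\mathbf{t}d^{\star}w$ and $\mathbf{n}dw$ near the Dirichlet--Neumann interface --- to the same reference. So the route is essentially the one the paper relies on, just spelled out.

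One step is wrong as literally written: you claim that ``decomposing according to the differential bidegree'' shows that $d^{(p-1)}d^{(p),\star}w$ and $d^{(p+1),\star}d^{(p)}w$ individually lie in $\Lambda L^{2}(\Omega)$. Both of these are $p$-forms, so there is no degree decomposition to exploit; a priori the representation theorem only gives that their \emph{sum} is in $L^{2}$. The correct way to separate them is to use $d\circ d=0$ and $d^{\star}\circ d^{\star}=0$, which imply that $\overline{\operatorname{Im} d^{(p-1)}}$ and $\overline{\operatorname{Im} d^{(p+1),\star}}$ are orthogonal in $\Lambda^{p}L^{2}(\Omega)$ (each being contained in the kernel of the adjoint of the other); equivalently, one applies the von Neumann construction to $d^{(p)}$ and to $d^{(p),\star}$ separately, obtaining two nonnegative selfadjoint operators whose sum is $\Delta^{(p)}$ with domain exactly~\eqref{domain p laplacian}. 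This is a local, standard fix --- and it is in fact how~\cite[Theorem 4.5]{gol2011hodge} proceeds --- but as stated your argument for the interior regularity in~\eqref{domain p laplacian} does not go through.
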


In order to make a link with the notation used in the main body of
this article, $\mathcal L^\varepsilon$ is nothing else than the
operator $\Delta^{(0)}$, and likewise, $\widetilde{\mathcal L}^\varepsilon$ is the operator $\Delta^{(0)}$ with~$\Omega=\widetilde{\Omega}_{\varepsilon}$, $\Gamma^{\varepsilon}_{\mathbf{D}}=\widetilde \Gamma^{\varepsilon}_{\mathbf{D}}$ and $\Gamma^{\varepsilon}_{\mathbf{N}}=\widetilde \Gamma^{\varepsilon}_{\mathbf{N}}$. Notice in particular that in this section, for the ease of notation, we do not explicitly indicate the dependence of the operators~$\Delta^{(p)}$ on $\varepsilon$.

\begin{prop}\label{prop:compact}
For $p\in\left\{0,1,2\right\}$, let 
$\Delta^{(p)}$  be the unbounded  operator~\eqref{p laplacian} defined on $\Lambda^{p}L^{2}(\Omega)$ with domain given by~\eqref{domain p laplacian}.
 The operator $\Delta^{(p)}$  is nonnegative self-adjoint and has compact resolvent.
\end{prop}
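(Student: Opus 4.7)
The nonnegativity and self-adjointness of $\Delta^{(p)}$ are already contained in the preceding proposition, obtained via the Friedrichs construction from the closed nonnegative quadratic form $Q^{(p)}$. The only new content in Proposition~\ref{prop:compact} is the compactness of the resolvent, and my plan is to reduce this to a compact embedding of the form domain into $\Lambda^p L^2(\Omega)$.

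The first step is a standard abstract criterion: a nonnegative self-adjoint operator $T$ on a Hilbert space $\mathcal{H}$ has compact resolvent if and only if $\mathcal{D}(\sqrt{T})$, endowed with the form norm $u \mapsto (\|u\|^2 + Q_T(u))^{1/2}$, embeds compactly into $\mathcal{H}$. Applying this to $T = \Delta^{(p)}$ with associated form $Q^{(p)}$, it is enough to prove that
\[
\bigl(\mathcal{D}(Q^{(p)}),\, \|\cdot\|_{Q^{(p)}}\bigr) \hookrightarrow \Lambda^p L^2(\Omega)
\]
is compact, where $\|w\|_{Q^{(p)}}^2 = \|w\|_{L^2}^2 + \|dw\|_{L^2}^2 + \|d^\star w\|_{L^2}^2$.

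For $p=0$ this is routine: the form domain is $H^{1}_{0,\Gamma^\varepsilon_{\mathbf{D}}}(\Omega) \subset H^{1}(\Omega)$, so the desired compactness is the classical Rellich embedding on the bounded domain $\Omega$. For $p \in \{1,2\}$ the argument is more subtle, because $\|\cdot\|_{Q^{(p)}}$ controls only $dw$ and $d^\star w$ separately and not the full gradient of $w$. Here I would appeal to a Gaffney--Weck--Picard type selection theorem: on sufficiently regular domains, $L^2$ $p$-forms with $L^2$ exterior derivative, $L^2$ coderivative, vanishing tangential trace on $\Gamma^\varepsilon_{\mathbf{D}}$, and vanishing normal trace on $\Gamma^\varepsilon_{\mathbf{N}}$ actually belong to a Sobolev space of positive index, so that a second application of Rellich produces the compact embedding. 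The required version of Gaffney's inequality and the ensuing compactness, formulated for mixed tangential-normal boundary conditions on Lipschitz manifolds, is provided in~\cite{gol2011hodge}, already invoked for Proposition~\ref{d}; the smoothness of the unit disk together with the regularity of the decomposition $\partial\Omega = \overline{\Gamma^\varepsilon_{\mathbf{D}}} \cup \overline{\Gamma^\varepsilon_{\mathbf{N}}}$ places us comfortably within its scope, and the same is true for the modified domain $\widetilde{\Omega}_\varepsilon$ thanks to~\eqref{eq:angle_intersection}.

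The main obstacle is clearly the compact embedding step for $p \geq 1$, which is not a consequence of the scalar Rellich theorem but a genuine Hodge-theoretic regularity result; for a clean statement and proof I would cite directly~\cite{gol2011hodge} rather than reprove it. Once this compact embedding is in hand, the conclusion of Proposition~\ref{prop:compact} follows immediately from the abstract criterion recalled above.
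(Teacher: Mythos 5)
Your proof is correct and follows essentially the same route as the paper: nonnegativity and self-adjointness come from the Friedrichs construction, and compact resolvent is reduced to the compact embedding of the form domain $\mathcal{D}(Q^{(p)})$ into $\Lambda^p L^2(\Omega)$, which for $p=1$ is the Maxwell-type compactness result with mixed boundary conditions (the paper cites \cite{BPS16,BPS16_bis,jochmann1997compactness} for this step rather than \cite{gol2011hodge}, but these belong to the same circle of results). The only difference is that for $p=2$ the paper avoids the Hodge-theoretic machinery entirely by identifying $2$-forms with scalar functions in dimension $2$, so that $\mathcal{D}(Q^{(2)})\subset H^{1}(\Omega)$ and the classical Rellich embedding applies exactly as for $p=0$; your appeal to the selection theorem for $p=2$ is not wrong, just heavier than necessary.
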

\begin{proof}
The fact that these operators are nonnegative and self-adjoint are direct consequences of their definitions as Friedrichs extensions of quadratic forms. It remains to prove that these operators have compact resolvent.

    For $p=0$ and $p=2$, this is a consequence of the compact embedding of $H^1(\Omega)$ in $L^2(\Omega)$  and the fact that $\mathcal{D}(\Delta^{(p)})\subset \mathcal{D}(Q^{(p)}) \subset H^1(\Omega)$ (2-forms can be identified with scalar valued functions in dimension 2).
    For $p=1$, it is known that the injection $\mathcal{D}(Q^{(1)}) \subset L^2(\Omega)$ is compact, see~\cite{BPS16,BPS16_bis,jochmann1997compactness}. Therefore, $\mathcal{D}(\Delta^{(1)})\subset L^2(\Omega)$ is compact.
\end{proof}
A consequence of the latter proposition is that the operators $\Delta^{(p)}$ have discrete spectrum, for $p \in \{0,1,2\}$.
The following proposition concerns the commutation property between the spectral projectors of $\Delta^{(p)}$ (for $p\in\left\{0,1,2\right\}$) and the differential  and codifferential operators. These are standard results that we recall since they are used in the main body of the article.

\begin{prop}\label{propertiesonpforms}
Consider the Laplacian on $p$-forms defined in~\eqref{p laplacian} and~\eqref{domain p laplacian}, as well as the form domain $ \mathcal{D}(Q^{(p)})$ defined in~\eqref{domain form}. The differential $d$  and codifferential $d^{\star}$ satisfy the following
commutation property: for all $z\in\mathbb{C} \backslash\sigma(\Delta^{(p)})$  and $w\in \mathcal{D}(Q^{(p)})$,
\begin{align}
  \label{commutation between p laplacian and diff}
  \begin{aligned}
  d\left(z-\Delta^{(p)}\right)^{-1}w & =\left(z-\Delta^{(p+1)}\right)^{-1}dw, \\
  d^{\star}\left(z-\Delta^{(p)}\right)^{-1}w & =\left(z-\Delta^{(p-1)}\right)^{-1}d^{\star}w.
  \end{aligned}
\end{align}
Consequently, for any $t\in\mathbb{R}_{+}$,
\begin{align}\label{commutation between projector and diff}
    d\circ\pi_{[0,\,t]}\left(\Delta^{(p)}\right)=\pi_{[0,\,t]}\left(\Delta^{(p+1)}\right)\circ d,
    \qquad 
    d^{\star}\circ\pi_{[0,\,t]}\left(\Delta^{(p)}\right)=\pi_{[0,\,t]}\left(\Delta^{(p-1)}\right)\circ d^{\star}.
\end{align}
\end{prop}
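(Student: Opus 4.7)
The plan is to first establish the algebraic commutation $\Delta^{(p+1)}d = d\Delta^{(p)}$ and $\Delta^{(p-1)}d^\star = d^\star\Delta^{(p)}$ formally, then verify that this identity lifts correctly to the resolvents on the stated domains, and finally obtain the commutation with the spectral projectors through a Dunford--Riesz contour integral.

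First, I would observe that, using $d\circ d = 0$ and $d^\star\circ d^\star = 0$ (see~\eqref{propriété de diff et coddiff}), the definition~\eqref{p laplacian} yields, at the purely algebraic level on smooth forms,
\begin{align*}
  \Delta^{(p+1)}\,d^{(p)} &= d^{(p)} d^{(p+1),\star} d^{(p)} + d^{(p+2),\star}\underbrace{d^{(p+1)}d^{(p)}}_{=0} = d^{(p)}\,\Delta^{(p)},\\
  \Delta^{(p-1)}\,d^{(p),\star} &= d^{(p-2)}\underbrace{d^{(p-1),\star}d^{(p),\star}}_{=0}+ d^{(p),\star}d^{(p-1)}d^{(p),\star} = d^{(p),\star}\,\Delta^{(p)}.
\end{align*}
This formal identity motivates the commutation relations; the main task is to turn it into a rigorous statement on the appropriate $L^2$ domains.

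Second, fix $w\in\mathcal{D}(Q^{(p)})$ and set $u:=(z-\Delta^{(p)})^{-1}w\in\mathcal{D}(\Delta^{(p)})$, so that $\Delta^{(p)}u=zu-w$. I would then check that $du$ lies in $\mathcal{D}(\Delta^{(p+1)})$, which amounts to verifying each condition in~\eqref{domain p laplacian}. The properties $d(du)=0$, $d^\star du=d^\star du\in\Lambda L^2(\Omega)$, and $d^\star d(du)=0$ are immediate from $u\in\mathcal{D}(\Delta^{(p)})$ together with $d\circ d=0$. The only nontrivial regularity condition is $d d^\star(du)\in\Lambda L^2(\Omega)$; but the formal identity above gives $dd^\star(du)=d\Delta^{(p)}u=z\,du-dw$, which belongs to $L^2$ precisely because $w\in\mathcal{D}(Q^{(p)})\subset\mathcal{D}(d)$. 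For the boundary conditions, $\mathbf{t}(du)=d(\mathbf{t}u)=0$ on $\Gamma^{\varepsilon}_{\mathbf{D}}$ and $\mathbf{n}(du)=0$ on $\Gamma^{\varepsilon}_{\mathbf{N}}$ come directly from the boundary conditions on $u$; the condition $\mathbf{t}d^\star(du)|_{\Gamma^{\varepsilon}_{\mathbf{D}}}=0$ follows from the decomposition $d^\star du=\Delta^{(p)}u-dd^\star u=zu-w-dd^\star u$ together with $\mathbf{t}u=\mathbf{t}w=\mathbf{t}(d^\star u)=0$ on $\Gamma^{\varepsilon}_{\mathbf{D}}$; and $\mathbf{n}d(du)=0$ is trivial since $d(du)=0$. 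Once $du\in\mathcal{D}(\Delta^{(p+1)})$ is established, applying $(z-\Delta^{(p+1)})$ and using the formal commutation gives $(z-\Delta^{(p+1)})du = zdu-d\Delta^{(p)}u = d(zu-\Delta^{(p)}u)=dw$, whence $du=(z-\Delta^{(p+1)})^{-1}dw$, which is the first identity in~\eqref{commutation between p laplacian and diff}. The argument for $d^\star$ is strictly symmetric, exchanging the roles of $d$ and $d^\star$ and of $\Gamma^{\varepsilon}_{\mathbf{D}}$ and $\Gamma^{\varepsilon}_{\mathbf{N}}$.

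Finally, for~\eqref{commutation between projector and diff}, I would use that $\Delta^{(p)}$ has compact resolvent (Proposition~\ref{prop:compact}), so that its spectrum is a discrete set and the spectral projector $\pi_{[0,t]}(\Delta^{(p)})$ admits a Dunford representation
\begin{align*}
\pi_{[0,t]}(\Delta^{(p)}) = \frac{1}{2\pi \mathrm{i}}\oint_{\gamma}(z-\Delta^{(p)})^{-1}\,\mathrm{d}z,
\end{align*}
along a contour $\gamma$ enclosing $\sigma(\Delta^{(p)})\cap[0,t]$ and avoiding $\sigma(\Delta^{(p+1)})$ (which is possible since both spectra are discrete). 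Applying $d$, which is closed, and invoking the resolvent identity just proved (valid for every $w\in\mathcal{D}(Q^{(p)})$, in particular on the range of $\pi_{[0,t]}(\Delta^{(p)})$ which lies in $\mathcal{D}(\Delta^{(p)})\subset\mathcal{D}(Q^{(p)})$), one interchanges $d$ with the integral to obtain the desired projector commutation. The same argument with $d^\star$ in place of $d$ yields the second identity. The main technical obstacle is the careful domain check in the second step above---in particular verifying the tangential trace condition $\mathbf{t}d^\star(du)=0$ on $\Gamma^{\varepsilon}_{\mathbf{D}}$---and selecting the contour $\gamma$ in the last step to avoid both spectra simultaneously.
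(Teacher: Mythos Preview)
Your proposal is correct and follows essentially the same route as the paper: verify $du\in\mathcal{D}(\Delta^{(p+1)})$ (and symmetrically for $d^\star u$) by checking each regularity and boundary condition in~\eqref{domain p laplacian}, deduce the resolvent identity, and then integrate over a contour to get the projector commutation. The only notable difference is that the paper first reduces to smooth $w$ by density of $\Lambda^p C^\infty(\Omega)\cap\mathcal{D}(Q^{(p)})$ in $\mathcal{D}(Q^{(p)})$ and then invokes elliptic regularity to conclude that $u=(z-\Delta^{(p)})^{-1}w$ is smooth; this makes the trace manipulations such as $\mathbf{t}(du)=d(\mathbf{t}u)$ and the decomposition $\mathbf{t}d^\star(du)=z\mathbf{t}u-\mathbf{t}w-d\mathbf{t}d^\star u$ unambiguously valid. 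Your direct argument implicitly relies on these trace identities holding at the $L^2$ level of regularity, which is not automatic; inserting the density-plus-ellipticity step at the start of your second paragraph closes this small gap.
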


\begin{proof}
Since $\Lambda^{p} C^{\infty}(\Omega) \cap \mathcal{D}(Q^{(p)})$ is dense in $\mathcal{D}(Q^{(p)})$, it is sufficient to consider the case of smooth forms~$w\in \Lambda^{p} C^{\infty}(\Omega)$. For $z\in\mathbb{C} \backslash\sigma(\Delta^{(p)})$, let us introduce 
\begin{align}\label{resolvent equality}
    u=\left(z-\Delta^{(p)}\right)^{-1}w.
\end{align}
Due to the ellipticity  of the associated boundary problem, \eqref{resolvent equality} implies that~$u$ belongs to~$\Lambda^{p} C^{\infty}(\Omega)$. Using the fact that $d$ and $d^{\star}$  commute with $\Delta^{(p)}$, we obtain the following relations as differential operators on~$\Omega$:
\begin{align}\label{identity1}
     d\left(z-\Delta^{(p)}\right)u=\left(z-\Delta^{(p+1)}\right)du=dw,
\end{align}
and 
\begin{align}
    d^{\star}\left(z-\Delta^{(p)}\right)u=\left(z-\Delta^{(p+1)}\right)d^{\star}u=d^{\star}w.
\end{align}
Since $u$ belongs to $\mathcal{D}(\Delta^{(p)})$, it satisfies \begin{align}\label{cond u}
\textbf{t}u_{|_{\Gamma^{\varepsilon}_{\mathbf{D}}}}=0,\quad \textbf{n}u_{|_{\Gamma^{\varepsilon}_{\mathbf{N}}}}=0,\quad  \textbf{t}d^{\star}u_{|_{\Gamma^{\varepsilon}_{\mathbf{D}}}}=0,\quad \textbf{n}du_{|_{\Gamma^{\varepsilon}_{\mathbf{N}}}}=0.
\end{align}
Let us check that $du\in\mathcal{D}(\Delta^{(p+1)})$. Using \cite[Appendix]{LelievreNier}, we have that $\textbf{t}$ commutes with the
differential. Using~\eqref{cond u}, we obtain that $\textbf{t}du=d\textbf{t}u=0$ on $\Gamma^{\varepsilon}_{\mathbf{D}}$. We have that $\textbf{t}d^{\star}(du)=\textbf{t}\Delta^{(p)}u-\textbf{t}dd^{\star}u=z\textbf{t}u-\textbf{t}w-d\textbf{t}d^{\star}u=0$ on $\Gamma^{\varepsilon}_{\mathbf{D}}$, where we used~\eqref{p laplacian} and~\eqref{cond u}. We get directly $\textbf{n}du=0$ on $\Gamma^{\varepsilon}_{\mathbf{N}}$ from~\eqref{cond u} and $\textbf{n}d(du)=0$ from the fact that $d\circ d=0$. 
Hence $du\in\mathcal{D}(\Delta^{(p+1)})$  and the identity~\eqref{identity1} yield
\begin{align*}
     d\left(z-\Delta^{(p)}\right)^{-1}w=du=\left(z-\Delta^{(p+1)}\right)^{-1}dw,
\end{align*}
which proves the first  commutation relation.  For the second one, it is sufficient
to show that $d^{\star}u\in\mathcal{D}(\Delta^{(p-1)})$.  Using \cite[Appendix]{LelievreNier}, we have that $\textbf{n}$ commutes with the
codifferential. With~\eqref{cond u}, we obtain that $\textbf{n}d^{\star}u=d^{\star}\textbf{n}u=0$ on $\Gamma^{\varepsilon}_{\mathbf{N}}$.  We have that $\textbf{n}d(d^{\star}u)=\textbf{n}\Delta^{(p)}u-\textbf{n}d^{\star}du=z\textbf{n}u-\textbf{n}w-d^{\star}\textbf{n}du=0$ on $\Gamma^{\varepsilon}_{\mathbf{N}}$, where we used~\eqref{p laplacian} and~\eqref{cond u}. We  directly get $\textbf{t}d^{\star}u=0$ on $\Gamma^{\varepsilon}_{\mathbf{D}}$ from~\eqref{cond u} and $\textbf{t}d^{\star}(d^{\star}u)=0$ from the fact that $d^{\star}\circ d^{\star}=0$.  Finally, \eqref{commutation between projector and diff} follows directly from~\eqref{commutation between p laplacian and diff}, by integrating over contours around the discrete eigenvalues in $[0,t]$.
\end{proof}
Let us conclude this section with the following theorem which makes precise the number of small eigenvalues of $\Delta^{(p)}$ for $p \in \{1,2\}$. It is thus a generalization of Theorem~\ref{One eigenvalue results N holes} which already yields $\operatorname{dim} \operatorname{Ran}\hspace{0.07cm}\pi_{[0,c\hspace{0.01cm}\overline{K}_\varepsilon]}\left(\Delta^{(0)}\right)
   =1$.

\begin{theo}\label{Oneholepropertie}
For $p\in\left\{1,2\right\}$, let 
$\Delta^{(p)}$  be the unbounded  operator defined on $\Lambda^{p}L^{2}(\Omega)$ with domain given by~\eqref{domain p laplacian}.
Then,
\begin{enumerate}
    \item [(i)] For any eigenvalue $\lambda$ of $\Delta^{(p)}$ and associated eigenform $w\in\mathcal{D}(\Delta^{(p)})$, it holds $dw\in\mathcal{D}(\Delta^{(p+1)})$ and $d^{\star}w\in\mathcal{D}(\Delta^{(p-1)})$,
with
\begin{align}\label{eigenvalue}
     d\Delta^{(p)}w=\Delta^{(p+1)}dw=\lambda \,dw,
     \end{align}
     and
     \begin{align}
    d^{\star}\Delta^{(p)}w=\Delta^{(p-1)}d^{\star}w=\lambda\, d^{\star}w.
\end{align}
 \item [(ii)] There exist $c > 0$ and $\overline{\varepsilon}>0$ such that, for any $\varepsilon\in (0,\overline{\varepsilon})$, 
   \[ \operatorname{dim} \operatorname{Ran}\hspace{0.07cm}\pi_{[0,c\hspace{0.01cm}\overline{K}_\varepsilon]}\left(\Delta^{(p)}\right)
   = \left\{ \begin{aligned}
     N & \quad \text{if} \,\,p=1,\\
     0 & \quad  \text{if}\,\,p=2.
   \end{aligned} \right.
    \]
\end{enumerate}
\end{theo}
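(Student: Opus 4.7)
For (i), the argument will closely parallel the proof of Proposition~\ref{propertiesonpforms}. Given $w\in\mathcal{D}(\Delta^{(p)})$ with $\Delta^{(p)}w=\lambda w$, I would check the four boundary conditions defining $\mathcal{D}(\Delta^{(p+1)})$ for $dw$: the tangential trace $\mathbf{t}(dw)=d(\mathbf{t}w)=0$ on $\Gamma^{\varepsilon}_{\mathbf{D}}$ uses that $\mathbf{t}$ commutes with $d$; $\mathbf{n}(dw)=0$ on $\Gamma^{\varepsilon}_{\mathbf{N}}$ comes directly from $w\in\mathcal{D}(\Delta^{(p)})$; $\mathbf{n}(d(dw))=0$ from $d\circ d=0$; and $\mathbf{t}(d^{\star}dw)=\mathbf{t}(\Delta^{(p)}w-dd^{\star}w)=\lambda\mathbf{t}w-d(\mathbf{t}d^{\star}w)=0$ on $\Gamma^{\varepsilon}_{\mathbf{D}}$. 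The identity $\Delta^{(p+1)}dw=\lambda\,dw$ then follows from $\Delta^{(p+1)}dw=d(d^{\star}dw)=d(\Delta^{(p)}w-dd^{\star}w)=d\Delta^{(p)}w$. The statement for $d^{\star}w$ is obtained symmetrically using that $\mathbf{n}$ commutes with $d^{\star}$ and $d^{\star}\circ d^{\star}=0$.

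For (ii) with $p=2$, my plan is to exploit Hodge duality in dimension $2$: the Hodge star $\star:\Lambda^{2}L^{2}(\Omega)\to\Lambda^{0}L^{2}(\Omega)$ is a unitary intertwining $\Delta^{(2)}$ and $\Delta^{(0)}$ that swaps tangential and normal traces on $\partial\Omega$. Hence $\Delta^{(2)}$ is unitarily equivalent to the scalar Laplacian on $\Omega$ with Neumann condition on $\Gamma^{\varepsilon}_{\mathbf{D}}$ and Dirichlet condition on $\Gamma^{\varepsilon}_{\mathbf{N}}$. Since $|\Gamma^{\varepsilon}_{\mathbf{N}}|\geq|\partial\Omega|/2$ uniformly in $\varepsilon\in(0,\varepsilon_{0})$ (in view of~\eqref{assumption on Gamma_D}), a Poincaré inequality for $H^{1}(\Omega)$ functions vanishing on $\Gamma^{\varepsilon}_{\mathbf{N}}$ with constant uniform in $\varepsilon$ would provide a strictly positive uniform lower bound $\underline{\lambda}>0$ on the first eigenvalue of this swapped operator. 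Reducing $\overline{\varepsilon}$ so that $c\overline{K}_{\varepsilon}<\underline{\lambda}$ for all $\varepsilon\in(0,\overline{\varepsilon})$ then yields $\dim\operatorname{Ran}\pi_{[0,c\overline{K}_{\varepsilon}]}(\Delta^{(2)})=0$.

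The case $p=1$ is the main one, and my strategy is to use the Hodge decomposition $\Lambda^{1}L^{2}(\Omega)=\ker\Delta^{(1)}\oplus\overline{\operatorname{Im}\,d^{(0)}}\oplus\overline{\operatorname{Im}\,d^{(1),\star}}$ (which is preserved by $\Delta^{(1)}$), so that positive eigenforms of $\Delta^{(1)}$ split into exact and coexact parts. Part (i) will produce bijections $u\mapsto du$, with inverse $\omega\mapsto d^{\star}\omega/\lambda$, between positive eigenforms of $\Delta^{(0)}$ and exact positive eigenforms of $\Delta^{(1)}$ sharing the same eigenvalue, and symmetrically $\eta\mapsto d^{\star}\eta$ between positive eigenforms of $\Delta^{(2)}$ and coexact positive eigenforms of $\Delta^{(1)}$; combined with Theorem~\ref{One eigenvalue results N holes} and the $p=2$ case, this will give $\dim\operatorname{Ran}\pi_{(0,c\overline{K}_{\varepsilon}]}(\Delta^{(1)})=1+0=1$. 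It then remains to identify $\ker\Delta^{(1)}$: any $\omega\in\ker\Delta^{(1)}$ is both closed and coclosed, so since $\Omega$ is simply connected, $\omega=dg$ for some harmonic $g\in H^{1}(\Omega)$; the boundary conditions $\mathbf{t}\omega=0$ on $\Gamma^{\varepsilon}_{\mathbf{D}}$ and $\mathbf{n}\omega=0$ on $\Gamma^{\varepsilon}_{\mathbf{N}}$ translate into $g$ being locally constant on each $\Gamma^{\varepsilon}_{\mathbf{D}_{k}}$ and satisfying $\partial_{n}g=0$ on $\Gamma^{\varepsilon}_{\mathbf{N}}$, and the space of such $g$ modulo additive constants is $(N-1)$-dimensional (parameterized by the $N$ boundary values on the $\Gamma^{\varepsilon}_{\mathbf{D}_{k}}$'s up to a global shift). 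An explicit basis is furnished by the harmonic ``splitting'' functions $g_{k}$ ($k=1,\dots,N-1$) solving the mixed Laplace problem determined by $g_{k}=\delta_{jk}$ on $\Gamma^{\varepsilon}_{\mathbf{D}_{j}}$ and $\partial_{n}g_{k}=0$ on $\Gamma^{\varepsilon}_{\mathbf{N}}$. Summing contributions, $\dim\operatorname{Ran}\pi_{[0,c\overline{K}_{\varepsilon}]}(\Delta^{(1)})=(N-1)+1=N$. The hard part will be this last identification of $\ker\Delta^{(1)}$, in particular verifying that the $dg_{k}$ belong to $\mathcal{D}(\Delta^{(1)})$ despite the corner singularities of $g_{k}$ at the Dirichlet--Neumann interface, and making rigorous the exact/coexact spectral bijections at the level of full spectral subspaces rather than merely individual eigenspaces.
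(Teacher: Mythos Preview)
Your proof is correct and follows the same architecture as the paper: part~(i) is treated exactly as in the proof of Proposition~\ref{propertiesonpforms}; for $p=2$ both you and the paper invoke Poincar\'e duality to reduce to a scalar Laplacian with swapped boundary conditions (the paper then uses monotonicity of the Dirichlet region in~$\varepsilon$ rather than your uniform Poincar\'e inequality, but either works); and for $p=1$ both proofs rest on the Hodge decomposition
\[
\operatorname{Ran}\pi_{[0,c\overline{K}_\varepsilon]}\left(\Delta^{(1)}\right) = d\,\operatorname{Ran}\pi_{[0,c\overline{K}_\varepsilon]}\left(\Delta^{(0)}\right)\oplus d^\star\,\operatorname{Ran}\pi_{[0,c\overline{K}_\varepsilon]}\left(\Delta^{(2)}\right)\oplus \mathcal{N}\left(\Delta^{(1)}\right),
\]
counting $1+0+(N-1)$. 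The only substantive difference is how $\dim\mathcal{N}(\Delta^{(1)})=N-1$ is obtained: the paper simply cites the identification of this dimension with the relative Betti number $b_1(\overline{\Omega},\Gamma^{\varepsilon}_{\mathbf{D}})$ from~\cite{gol2011hodge} and its computation in~\cite{Licht}, whereas you compute it by hand via the harmonic potentials $g_k$. Your route is more self-contained; note however that the regularity worry you flag is not actually an obstacle, since membership of $dg_k$ in $\mathcal{D}(\Delta^{(1)})$ requires only $dg_k,\,d(dg_k),\,d^\star(dg_k)\in L^2$ together with the trace conditions, all of which follow immediately from $g_k\in H^1(\Omega)$, $\Delta g_k=0$, and the boundary data, without any control on the corner behaviour of $g_k$ itself.
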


\begin{proof}
 Item $(i)$ follows straightforwardly from the characterization of the domain of $\Delta^{(p)}$,~\eqref{propriété de diff et coddiff} and Proposition~\ref{propertiesonpforms}. It therefore suffices to prove item $(ii)$. We first show that~$\operatorname{dim}\text{Ran}\hspace{0.07cm}\pi_{[0,c\hspace{0.01cm}\overline{K}_\varepsilon]}(\Delta^{(2)})=0.$ 
Since we are working in a two-dimensional space, 2-forms can be identified with scalar valued functions (and thus 0-forms), and the eigenvalue problem for the mixed Laplacian~$\Delta^{(2)}$ on $2$-forms can be rewritten as an eigenvalue problem for a mixed Laplacian on $0$-forms, with Neumann boundary conditions on $\Gamma^{\varepsilon}_{\mathbf{D}}$ and Dirichlet boundary conditions on $\Gamma^{\varepsilon}_{\mathbf{N}}$ (this is the so-called Poincaré duality between $p$-forms and $(d-p)$-forms in dimension $d$). As a consequence, since $\Gamma^{\varepsilon}_{\mathbf{D}} \subset \Gamma^{\varepsilon'}_{\mathbf{D}}$ if $\varepsilon < \varepsilon'$, the first eigenvalue of $\Delta^{(2)}$ 
is uniformly (in $\varepsilon$) lower bounded by a positive constant when~$\varepsilon$ goes to~$0$.
This implies that for $\varepsilon$ small enough, $\operatorname{dim}\text{Ran}\hspace{0.07cm}\pi_{[0,c\hspace{0.01cm}\overline{K}_\varepsilon]}(\Delta^{(2)})=0$.

Let us now show that $\operatorname{dim}\text{Ran}\hspace{0.07cm}\pi_{[0,c\hspace{0.01cm}\overline{K}_\varepsilon]}(\Delta^{(1)})=N$. By classical Hodge theory (see~\cite{gol2011hodge}), 
\begin{equation}\label{eq:Hodge}
\text{Ran}\hspace{0.07cm}\pi_{[0,c\hspace{0.01cm}\overline{K}_\varepsilon]}\left(\Delta^{(1)}\right) = d \, \text{Ran}\hspace{0.07cm}\pi_{[0,c\hspace{0.01cm}\overline{K}_\varepsilon]}\left(\Delta^{(0)}\right)\oplus d^\star \, \text{Ran}\hspace{0.07cm}\pi_{[0,c\hspace{0.01cm}\overline{K}_\varepsilon]}\left(\Delta^{(2)}\right) \oplus \mathcal{N}\left(\Delta^{(1)}\right),
\end{equation}
where $\mathcal{N}(\Delta^{(1)})$ denote the space of harmonic $1$-forms with mixed boundary conditions:
\begin{align}
   \mathcal{N}\left(\Delta^{(1)}\right):=\biggl\{w\in \Lambda^{1} L^{2}({\Omega}),\, dw=d^{\star}w=0\,\,\text{in}\,\,\Omega, \textbf{t}w_{|_{\Gamma^{\varepsilon}_{\mathbf{D}}}}=0\,\,\text{and}\,\, \textbf{n}w_{|_{\Gamma^{\varepsilon}_{\mathbf{N}}}}=0\biggr\}.
\end{align}
Let us study the dimensions of the three linear spaces in the right-hand side of~\eqref{eq:Hodge}.

By Theorem \ref{One eigenvalue results N holes}, since $\operatorname{dim}\text{Ran}\hspace{0.07cm}\pi_{[0,c\hspace{0.01cm}\overline{K}_\varepsilon]}(\Delta^{(0)})=1$, there is only one eigenfunction $u \in \mathcal D(\Delta^{(0)})$ associated with the small eigenvalue $\lambda^{\varepsilon}_{0}$ in $[0,c\hspace{0.01cm}\overline{K}_\varepsilon]$ (see \ref{lambdaNhole}). Using~\eqref{eigenvalue}, and since $u$ is not constant, the $1$-form $du$ is an eigenform for the operator~$\Delta^{(1)}$ associated with the eigenvalue~$\lambda^{\varepsilon}_{0}$. In addition, since  $\operatorname{dim}\text{Ran}\hspace{0.07cm}\pi_{[0,c\hspace{0.01cm}\overline{K}_\varepsilon]}(\Delta^{(2)})=0$, $d^\star \text{Ran}\hspace{0.07cm}\pi_{[0,c\hspace{0.01cm}\overline{K}_\varepsilon]}(\Delta^{(2)})=\{0\}$.

Let us now consider the space $\mathcal{N}(\Delta^{(1)})$ of harmonic $1$-forms with mixed boundary conditions.
Using \cite[Theorems~1.1 and~5.3]{gol2011hodge} for example, $\mathcal{N}(\Delta^{(1)})$ is a finite-dimensional space with a dimension equal to the topological Betti number $b_{1}(\overline{\Omega},\Gamma^{\varepsilon}_{\mathbf{D}})$ of $\overline{\Omega}$ relative to $\Gamma^{\varepsilon}_{\mathbf{D}}$. Using  \cite[Example 7.1]{Licht},  since $\Gamma^{\varepsilon}_{\mathbf{D}}$ is the union of $N$  open subsets of $\partial\Omega$ (we have $N$ holes on the boundary), we obtain that the first Betti number $b_{1}(\overline{\Omega},\Gamma^{\varepsilon}_{\mathbf{D}})$  is equal to $N-1$.  As a consequence, the dimension of $\mathcal{N}(\Delta^{(1)})$ is equal to  $N-1$.  
This finally allows to conclude that~$\operatorname{dim}\text{Ran}\hspace{0.07cm}\pi_{[0,c\hspace{0.01cm}\overline{K}_\varepsilon]}(\Delta^{(1)})=N$.
\end{proof}

\subsection{Numerical illustrations of Theorem~\ref{Oneholepropertie}}\label{Numerical illustration of Theo A.3}

The objective of this section is to illustrate the results of Theorem~\ref{Oneholepropertie} on the 2-dimensional disk $\Omega$, with two absorbing windows $\Gamma^{\varepsilon}_{\mathbf{D}_{1}}$ and $\Gamma^{\varepsilon}_{\mathbf{D}_{2}}$ centered at $x^{(1)}=(1,0)$ and $x^{(2)}=(-1,0)$, with radii $\mathrm{e}^{-1/K_\varepsilon^{(1)}}=\mathrm{e}^{-1/K_\varepsilon^{(2)}}=0.1$. We thus compute numerically the spectrum and associated eigenforms of $\Delta^{(p)}$ for $p \in \{1,2\}$ (see Figure~\ref{image 0 forms} for results on the same problem when $p=0$).

Let us first consider $p=1$. We use the fact that $1$-forms can be associated with vector fields, and we thus consider the following eigenvalue problem on a vector field $\mathbf{u}^{\varepsilon}:\Omega \to {\mathbb R}^2$:
\begin{align}\label{def1lapl}
 \left\{
    \hspace{-0.3cm}\begin{array}{ll}
  \hspace{0.2cm}\Delta^{(1)} \mathbf{u}^{\varepsilon}\hspace{-0.3cm}&=\mu^{\varepsilon}\mathbf{u}^{\varepsilon}\,\,\text{in}\,\,\Omega,\\
    \quad\quad\textbf{n}\mathbf{u}^{\varepsilon}\hspace{-0.3cm}&=0\,\,\,\text{on}\,\,\Gamma^{\varepsilon}_{\mathbf{N}},\\
   \quad\quad \textbf{t}\mathbf{u}^{\varepsilon}\hspace{-0.3cm}&=0\,\,\,\text{on}\,\,\Gamma^{\varepsilon}_{\mathbf{D}_{k}} \,\,\,\text{for}\,\,\,k\in \{1, 2\},
     \end{array}
\right.
    \end{align}
    where $\textbf{t}\mathbf{u}^{\varepsilon}$ denotes the tangential component and $\textbf{n}\mathbf{u}^{\varepsilon}$  the normal component of $\mathbf{u}^{\varepsilon}$. Note that since we are working in dimension 2, the operator $\Delta^{(1)}$ writes more explicitly:
    $$
   \Delta^{(1)} \mathbf{u}^{\varepsilon}=\text{curl}\hspace{0.1cm}\text{curl}\,\mathbf{u}^{\varepsilon}-\nabla \hspace{0.1cm}\text{div}\,\mathbf{u}^{\varepsilon}.$$
   In order to obtain reliable results, we 
 use the framework of the so-called Finite Element Exterior Calculus which provides finite element spaces which preserve the geometric and topological structures underlying the equations (see \cite{Arnold,arnold2014periodic} for more details). More precisely, we utilized Raviart--Thomas Orthogonal $RT_0Ortho$ finite elements~\cite{RT77} (a.k.a. Nedelec finite elements~\cite{nedelec1980mixed}). The results were obtained using FreeFem$++$. The mesh was produced using the automatic mesh generator of FreeFem++, with about $40$ cells to discretize the exit regions~$\Gamma_{\mathbf{D}_k}$ and about $80$ cells to mesh the remaining part of the boundary.

As expected in view of item~$(iii)$ of Theorem~\ref{Oneholepropertie} (since here $N=2$), we obtain two eigenvector fields 
$\mathbf{u}_0^\varepsilon$ and $\mathbf{u}_1^\varepsilon$ associated with the two eigenvalues $\mu_0^\varepsilon=0$ and $\mu_1^\varepsilon \approx 0.76$, see Figure~\ref{image 1 forms}. More precisely, in accordance with~\eqref{eq:Hodge}, the eigenvalue $\mu_0^\varepsilon=0$ is associated with an harmonic eigenform  $\mathbf{u}_0^\varepsilon$ and the other eigenvalue $\mu_1^\varepsilon \approx 0.76$ is nothing but the small eigenvalue~$\lambda_0^\varepsilon \approx 0.76$ of  the operator $\Delta^{(0)}={\mathcal L}^\varepsilon$ that we had obtained previously (see Figure~\ref{image 0 forms}), associated with the differential (namely the gradient) of the eigenfunction ${u}_0^{\varepsilon}$. Figure~\ref{div u1} indeed illustrates the fact that ${u}_0^{\varepsilon}$ can be obtained as the codifferential $d^{\star}$ of the eigen-vector field associated with $\lambda_0^\varepsilon$: $- \text{div}\,\mathbf{u}_1^\varepsilon$ is very close to $u_0^\varepsilon$ (compare Figure~\ref{image 0 forms} and  Figure~\ref{div u1}).

\begin{figure}
\hspace{-1.5cm}
\begin{minipage}{0.41\textwidth}
\centering
\includegraphics[scale=0.5]{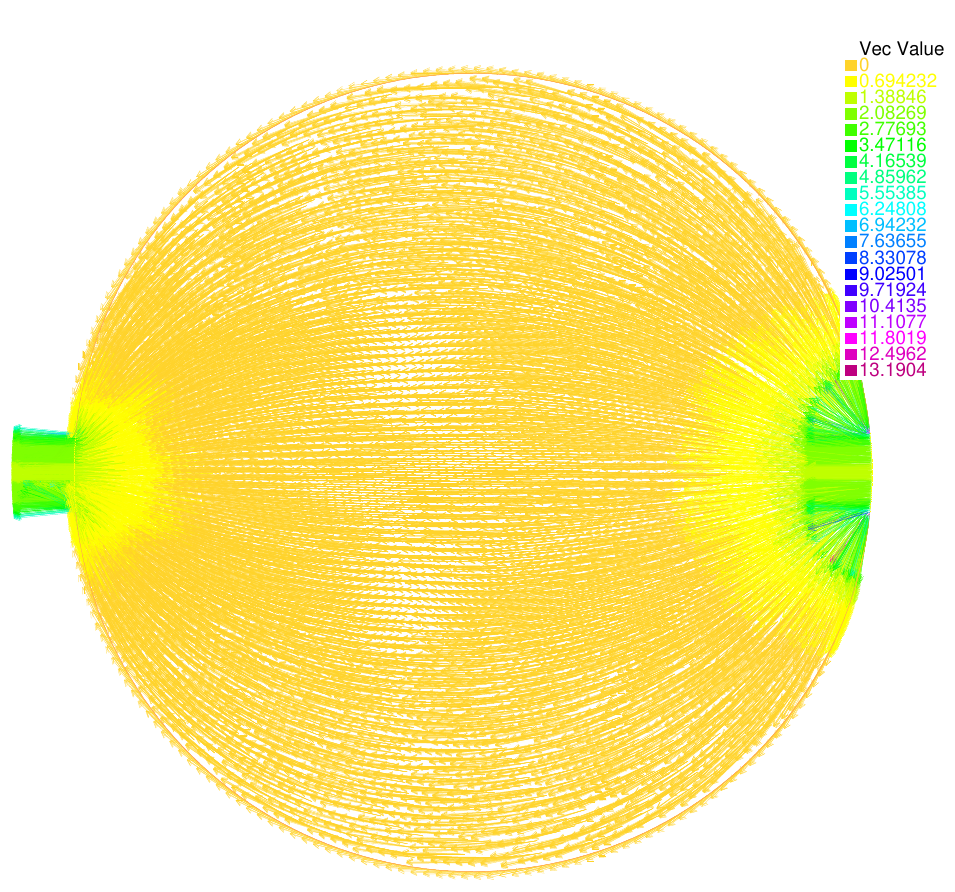}
\end{minipage}\hfill
\begin{minipage}{0.59\textwidth}
\centering
\includegraphics[scale=0.5]{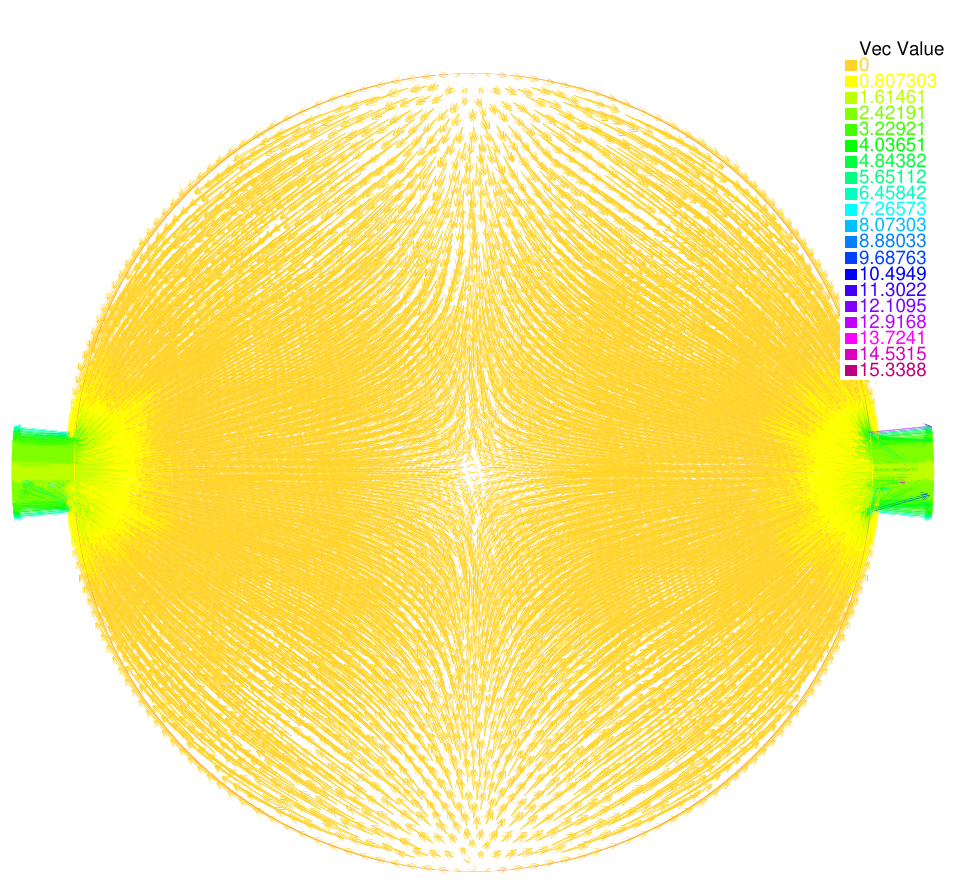}
\end{minipage}
\caption{First two eigen-vector fields of $\Delta^{(1)}$ (with two holes with radii $0.1$). Left: first eigen-vector field associated with $\mu^{\varepsilon}_0=0$. Right: second   eigen-vector field associated with  $\mu^{\varepsilon}_1\approx 0.76$.}\label{image 1 forms}
\end{figure}

\begin{figure}
\centering
\includegraphics[scale=0.5]{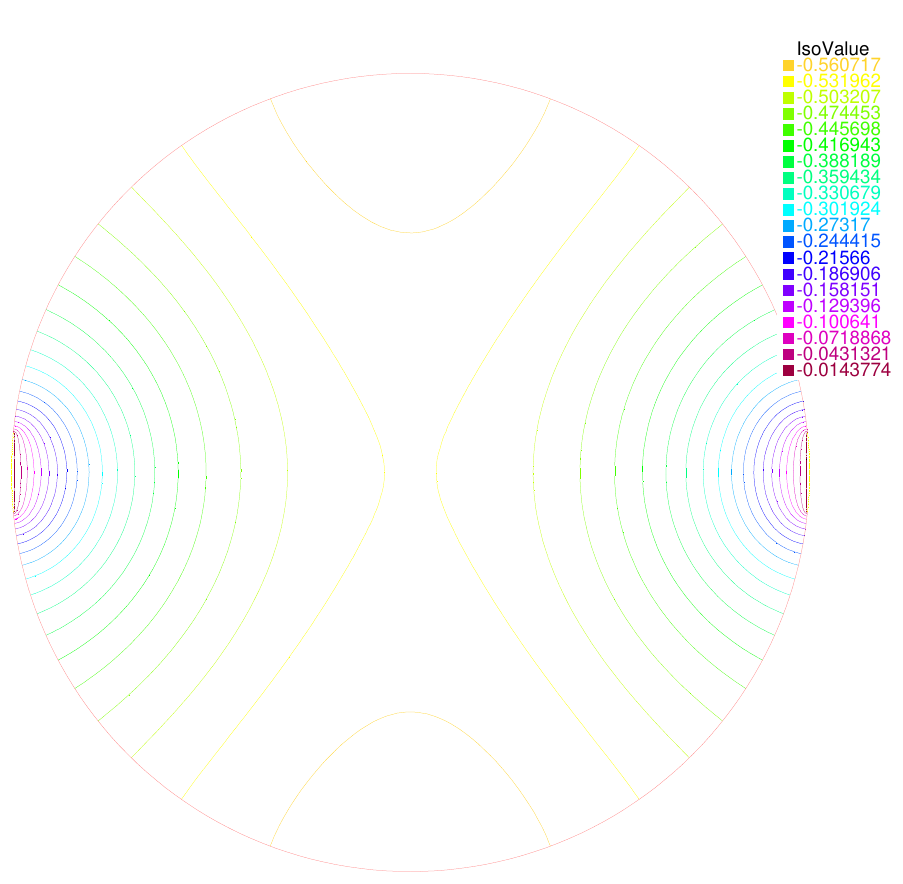}
\caption{Contour plot of $-\text{div}\hspace{0.05cm}{\mathbf{u}_1^\varepsilon}$. This result should be compared with the one on the left plot of Figure~\ref{image 0 forms}.}\label{div u1}
\end{figure}

Let us now consider the operator $\Delta^{(2)}$. As explained in the proof of Theorem~\ref{Oneholepropertie}, the spectrum of this operator can be studied by considering the scalar Laplacian on $\Omega$, with Neumann conditions on the two absorbing windows, and Dirichlet conditions elsewhere. The results were obtained using FreeFem$++$, with $P_1$ finite elements. The mesh was produced using the automatic mesh generator of FreeFem++, with about $200$ cells to discretize the exit regions $\Gamma_{\mathbf{D}_k}$ and about $400$ cells to mesh the remaining part of the boundary.

The first two eigenfunctions of the operator $\Delta^{(2)}$ are represented on Figure~\ref{image NewDiri}: they are associated with eigenvalues which are large compared to the values of the eigenvalues we have obtained on~$\Delta^{(0)}$ and~$\Delta^{(1)}$. This is in accordance with the fact that we do not expect $\Delta^{(2)}$ to have small eigenvalues, see  item $(iii)$ of Theorem~\ref{Oneholepropertie}. These functions are  actually very close to the first two eigenfunctions of the Laplacian with full Dirichlet boundary conditions.

\begin{figure}
\hspace{-1.4cm}
\begin{minipage}{0.41\textwidth}
\centering
\includegraphics[scale=0.5]{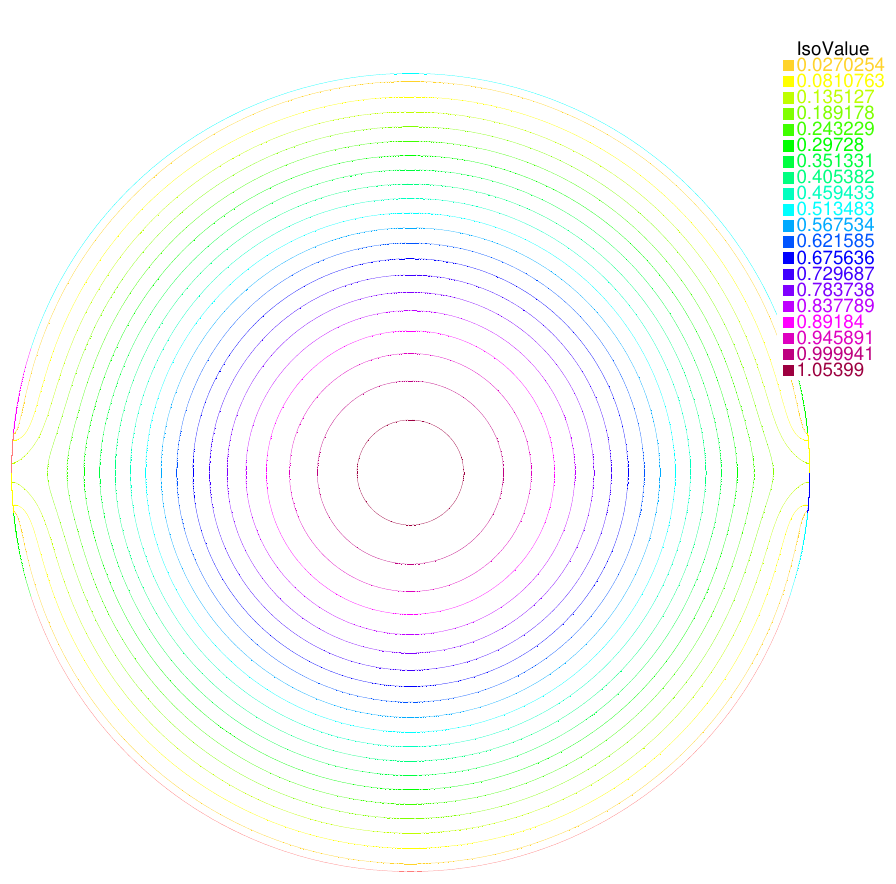}
\end{minipage}\hfill
\begin{minipage}{0.59\textwidth}
\centering
\includegraphics[scale=0.5]{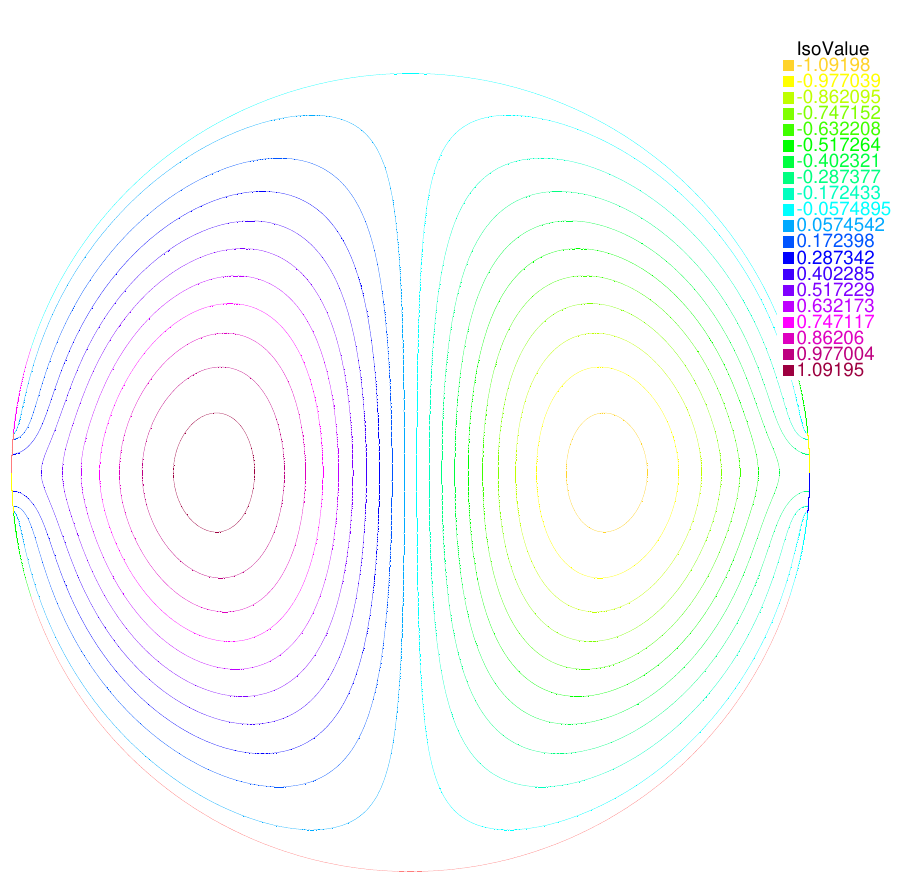}
\end{minipage}
\caption{First two eigen $2$-forms for two holes with radii~$0.1$. Left: first eigen $2$-forms associated with an eigenvalue close to $5.72$. Right: second  eigen $2$-forms associated with an eigenvalue close to $14.4$.}\label{image NewDiri}
\end{figure}

\section{The narrow escape problem on the three-dimensional ball}\label{app:3d}

We illustrate the generality of the approach presented in this work to study the narrow escape problem by quickly outlining how it could be applied to the three-dimensional ball.

Let us consider the three-dimensional unit ball $\Omega \subset \R^3$, with a partition of the boundary into a reflecting part $\Gamma^{\varepsilon}_{\mathbf{N}}$ and an  absorbing part $\Gamma^{\varepsilon}_{\mathbf{D}}$ consisting
of $N$ disjoint small connected regions $\Gamma^{\varepsilon}_{\mathbf{D}_{k}}$, $k\in \{ 1,\dots, N\}$   centered at  $x^{(k)}$. Again, one is interested in the first ($L^2$-normalized) eigenfunction $u^\varepsilon_0$ and the associated eigenvalue $\lambda_0^\varepsilon$ of the Laplacian with mixed Dirichlet (on $\Gamma^{\varepsilon}_{\mathbf{D}}$) and Neumann (on $\Gamma^{\varepsilon}_{\mathbf{N}}$) boundary conditions. Following the same reasoning as for the disk, one expects the following quasimode to be a very good approximation of the first eigenfunction:
       \begin{equation}
       \label{QuasiNhole 3 dimension}
       \begin{split}
   {\varphi}^{\varepsilon} &:= -\frac{1}{\sqrt{|\Omega|}}-\frac{1}{\sqrt{|\Omega|}}\sum_{k=1}^{N}K_\varepsilon^{(k)} f_{k},\\ 
   f_k(x) &=-|\Omega|\left( \frac{1}{2\pi \left|x-x^{(k)}\right|}+\frac{|x|^2}{8\pi}-\frac{1}{4\pi}\log\left(1-x \cdot x^{(k)}  + |x-x^{(k)}|\right)\right),
    \end{split}
\end{equation}
where $(K_\varepsilon^{(k)})_{k=1, \ldots,N}$ are positive real
numbers which converge to~$0$ as~$\varepsilon \to 0$, and which will
be related to the sizes of the $N$ absorbing parts
$(\Gamma^\varepsilon_{\mathbf{D}_k})_{k=1\ldots N}$ below. Indeed, it can readily be checked that $f_k$ satisfies (see~\cite[Lemma 2.1 and Appendix~A]{CWS})
 $$\left\{\begin{aligned}
     \Delta f_k&=-1, \\
     \partial_n f_k&=-|\Omega|\delta_{x^{(k)}}.
 \end{aligned}
 \right.$$

Following the formal reasoning presented in Section~\ref{sec:intro}, let us then introduce the modified domain
$$\widetilde{\Omega}_\varepsilon=\Omega \cap (\varphi^{\varepsilon})^{-1}(-\infty,0].$$
As in the two dimensional case, in the limit $\varepsilon \to 0$, the function $\varphi^\varepsilon$ vanishes in $\Omega$ on $N$ disjoint connected sets $(\widetilde\Gamma^{\varepsilon}_{\mathbf{D}_k})_{k=1 \ldots,N}$. For each $k$, using the fact that when $x$ is close to $x^{(k)}$, 
\[
\varphi^\varepsilon(x) \approx -\frac{1}{\sqrt{|\Omega|}} \left(1 - K_\varepsilon^{(k)}\frac{|\Omega|}{2\pi|x-x^{(k)}|} \right),
\]
one can check that $\widetilde\Gamma^{\varepsilon}_{\mathbf{D}_k}$ is  contained in a spherical shell centered at $x^{(k)}$ and with internal and external radii of the order of $$r^{(k)}_\varepsilon=\frac{2 K_\varepsilon^{(k)}}{3}.$$
Notice that this is very different from the scaling in the two dimensional case, see Lemma~\ref{levelset}.

Working in the modified domain $\widetilde{\Omega}_\varepsilon$ and following the same reasoning as in the two dimensional case, it should be possible to obtain the asymptotic behavior of the first eigenpair~$(\widetilde\lambda_0^\varepsilon,\widetilde u_0^\varepsilon)$, since $\varphi^\varepsilon$ is an excellent approximation of $\widetilde u_0^\varepsilon$. 
More precisely, we believe that the following can be proven. Concerning the eigenvalue, one expects, as in the two dimensional case, 
\[
\widetilde\lambda_0^\varepsilon\approx - \frac{\Delta \varphi^\varepsilon}{\varphi^\varepsilon} \approx \overline{K}_\varepsilon,
\]
where~$\overline{K}_\varepsilon$ is again defined by~\eqref{eq:barK}.
Moreover, 
\[
\int_{\widetilde{\Gamma}^{\varepsilon}_{\mathbf{D}}} \partial_n \widetilde{u}_{0}^{\varepsilon}=\int_{\Omega} \Delta \widetilde{u}_{0}^{\varepsilon} = -\widetilde\lambda_0^\varepsilon \int_{\Omega} \widetilde{u}_{0}^{\varepsilon}\approx  \overline{K}_\varepsilon\sqrt{|\Omega|}.
\]
Likewise, since $\varphi^\varepsilon \approx -|\Omega|^{-1/2} \left(1 + K_\varepsilon^{(k)} f_k\right)$ when $x$ is close to $x^{(k)}$, it is expected that 
\[
\int_{\widetilde{\Gamma}^{\varepsilon}_{\mathbf{D}_k}} \partial_n \widetilde{u}_{0}^{\varepsilon}\approx
\int_{\widetilde{\Gamma}^{\varepsilon}_{\mathbf{D}_k}} \partial_n \varphi^\varepsilon\approx-\frac{K^{(k)}_\varepsilon}{\sqrt{|\Omega|}} \int_{\widetilde{\Gamma}^{\varepsilon}_{\mathbf{D}_k}} \partial_n f_k \approx K^{(k)}_\varepsilon\sqrt{|\Omega|}.
\]
As a consequence, in the limit $\varepsilon\rightarrow0$, for all~$k\in\{1,\dots,N\}$, one expects
\begin{align*}
\mathbb{P}_{\widetilde{\nu}_{0}^{\varepsilon}}\left[X_{\widetilde{\tau}_\varepsilon}\in\Gamma^{\varepsilon}_{\mathbf{D} _k}\right] \approx \frac{K^{(k)}_\varepsilon}{\overline{K}_\varepsilon},
\end{align*}
as in the two dimensional case. As already explained, our objective in this appendix is not to provide rigorous proofs, but simply to illustrate that the method that we introduced in this work should be useful to study entropic metastability in rather general settings. We indeed intend to extend the mathematical results presented above in the simple setting of the two dimensional disk to more general geometries in future works.

\subsection*{Acknowledgments} 
We thank Doug Arnold (University of Minnesota), Alexandre Ern (Ecole des Ponts), Jean-Luc Guermond (Texas A\&M), and Martin Licht (EPFL) for discussions on the appropriate discretization of the eigenvalue problems used in Appendix~\ref{Laplacian p forms}. We also would like to thank Jean-François Bony (Université de Bordeaux), Martin Costabel (Université de Rennes), Monique Dauge (Université de Rennes), Dorian Le Peutrec (Nantes Université), Laurent Michel (Université de Bordeaux) and Boris Nectoux (Université Clermont Auvergne) for discussions on the mathematical analysis. This work was funded by the Agence Nationale de la Recherche, under grant ANR-19-CE40-0010-01 (QuAMProcs), and by the European Research Council (ERC) under the European Union's Horizon 2020 research and innovation programme (project EMC2, grant agreement No 810367).

\end{document}